\newtheorem{assumption}{Assumption}
\newtheorem{theorem}{Theorem}[section]
\newtheorem{lemma}[theorem]{Lemma}
\newtheorem{proposition}[theorem]{Proposition}
\newtheorem{definition}[theorem]{Definition}
\newtheorem{corollary}[theorem]{Corollary}
\newenvironment{proof}[1][Proof]{\par
	\normalfont\topsep6pt\relax
	\trivlist\item[\hskip\labelsep\itshape #1.]\ignorespaces
}{%
	\hfill\ensuremath{\square}\endtrivlist
}
\def\x{x}
\def\y{y}
\def\z{z}
\def\u{u}
\def\IR{\mathbb{R}}
\title{ A Single-Loop Gradient Algorithm for Pessimistic Bilevel Optimization via Smooth Approximation}
\author{
Qichao Cao\textsuperscript{1} \\
\texttt{caoqc2024@mail.sustech.edu.cn}
\And
Shangzhi Zeng\textsuperscript{2,1} \\
\texttt{zengsz@sustech.edu.cn}
\And
Jin Zhang \textsuperscript{1,2,3,\footnotemark[1]}\\
\texttt{zhangj9@sustech.edu.cn}
}
\begin{document}

\maketitle

\footnotetext[1]{Department of Mathematics, Southern University of Science and Technology, Shenzhen, China.}
\footnotetext[2]{National Center for Applied Mathematics Shenzhen, Southern University of Science and Technology, Shenzhen, China.}
\footnotetext[3]{Detection Institute for Advanced Technology Longhua-Shenzhen (DIATLHSZ), Shenzhen, China.}
\begingroup
\renewcommand{\thefootnote}{\fnsymbol{footnote}}
\footnotetext[1]{Corresponding Author.}
\endgroup
\begin{abstract}
Bilevel optimization has garnered significant attention in the machine learning community recently, particularly regarding the development of efficient numerical methods. 
While substantial progress has been made in developing efficient algorithms for optimistic bilevel optimization, the study of methods for solving Pessimistic Bilevel Optimization (PBO) remains relatively less explored, 
especially the design of fully first-order, single-loop gradient-based algorithms. This paper aims to bridge this research gap. We first propose a novel smooth approximation to the PBO problem, using penalization and regularization techniques. Building upon this approximation, we then propose SiPBA (Single-loop Pessimistic Bilevel Algorithm), a new gradient-based method specifically designed for PBO which avoids second-order derivative information or inner-loop iterations for subproblem solving. We provide theoretical validation for the proposed smooth approximation scheme and establish theoretical convergence for the algorithm SiPBA. Numerical experiments on synthetic examples and practical applications demonstrate the effectiveness and efficiency of SiPBA. 
\end{abstract}

\section{Introduction}
Bilevel optimization constitutes a hierarchical optimization problem formulated as follows:
\begin{equation*}
		\min_{\x\in{X}} \;F(\x, \y)\quad
		s.t.\quad \y\in\mathcal{S}(\x): =\underset{{\y^\prime \in {Y}}}{\arg\min} f(\x, \y^\prime),
\end{equation*}
where $x\in\mathbb{R}^n$ and $y\in\mathbb{R}^m$ represent the upper-level and lower-level decision variables, respectively, and ${X}\subseteq\mathbb{R}^n$ and ${Y}\subseteq\mathbb{R}^m$ are closed convex sets. The functions $F:\mathbb{R}^{n}\times\mathbb{R}^{m}\to\mathbb{R}$ and $f:\mathbb{R}^{n}\times\mathbb{R}^{m}\to\mathbb{R}$ are the upper-level and lower-level objective functions, respectively. Bilevel optimization naturally models non-cooperative game between two players, often referred to as a Stackelberg game \cite{vonStackelbergHeinrich1953TTot}. 
When the lower‐level problem in bilevel optimization admits multiple optimal solutions for a given $\x$, the corresponding decision variable $\y$ in the upper‐level objective becomes ambiguous. To resolve this, bilevel optimization is commonly formulated in two distinct settings: Optimistic Bilevel Optimization (OBO) and Pessimistic Bilevel Optimization (PBO).

In the OBO setting, it is assumed that the lower-level selects a solution $\y \in \mathcal{S}(\x)$ that is most favorable to the upper-level's objective $F$. The OBO formulation is thus:
  \begin{equation*}
      		\min_{\x\in{X}} \min_{y\in \mathbb{R}^m} \;F(\x, \y)\quad
		s.t.\quad \y\in\mathcal{S}(\x),
  \end{equation*}

Conversely, the PBO setting considers a cautious or adversarial scenario where the lower-level is assumed to choose a solution $\y \in \mathcal{S}(\x)$ that is least favorable to the upper-level. The PBO formulation is:
  \begin{equation*}
      		\min_{\x\in{X}} \max_{y\in \mathbb{R}^m} \;F(\x, \y)\quad
		s.t.\quad \y\in\mathcal{S}(\x),
  \end{equation*}
  
Therefore, OBO models scenarios predicated on cooperative or aligned lower-level responses, whereas PBO is essential when robustness against worst-case outcomes, often encountered under uncertainty or in adversarial contexts, is required.

In recent years, bilevel optimization has garnered substantial interest within the machine learning community, finding applications in areas such as hyperparameter optimization \cite{franceschi2017bridge}, adversarial learning \cite{zhang2022revisiting}, reinforcement learning \cite{zheng2024safe}, and meta-learning \cite{gao2020modeling}, among others, where first-order gradient-based methods are preferred for their efficiency and scalability\cite{bottou2018optimization,lin2020accelerated,sra2011optimization}.  

Much of the existing bilevel research focuses on the OBO case, for which numerous fully first-order gradient-based algorithms suitable for large-scale machine learning tasks have been developed \cite{kwon2023fully,liu2022bome,liu2024moreau,shen2023penalty}, often by reformulating the problem using Karush-Kuhn-Tucker (KKT) conditions or through value-function-based constraints.
While OBO benefits from a well-established algorithmic toolkit, PBO remains comparatively underexplored from an algorithmic standpoint. PBO offers a robust framework for leaders concerned with worst-case follower responses and a growing body of work has explored the potential of PBO in various machine learning applications such as adversarial learning \cite{bruckner2011stackelberg,benfield2024classification}, contextual optimization \cite{bucareydecision,jimenez2025pessimistic} and hyperparameter optimization \cite{ustun2024hyperparameter}. Outside the machine learning domain, PBO has found applications in many other practical scenarios, including but not limited to demand response management\cite{kis2021optimistic}, rank pricing and second-best toll pricing \cite{calvete2024novel,ban2009risk}, production-distribution planning \cite{zheng2016pessimistic}, and gene knockout model\cite{zeng2020practical}. Yet, the inherent max-structure at the upper level of PBO creates a more complex, three-level-like structure (min-max-min), making the direct application of gradient-based techniques developed for OBO challenging. Although several PBO single-level reformulations have been proposed \cite{wiesemann2013pessimistic, zeng2020practical, benchouk2024relaxation} , their intricate structures continue to pose difficulties for the development of fully first-order gradient-based solution methods. Recently, \cite{guanadaprox} proposed AdaProx, a gradient-based method for PBO. This AdaProx method employs a double-loop procedure and requires second-order derivative information. This motivates our central research question: 
\begin{center}
    \bf Can we design a fully first-order single-loop gradient-based algorithm for PBO?
\end{center}

This paper demonstrates that the answer is affirmative. We approach PBO by reformulating it as the minimization of a value function:
\begin{equation}\label{pessimisticBLO}
		\min_{x \in {X}} \; \phi(x), \quad \text{where} \; \phi(x) := \max_{y\in \mathbb{R}^m}\; \left\{ F(x, y) \quad \text{s.t.} \; y \in \mathcal{S}(x) \right\}.
\end{equation}
As indicated by the formulation in \eqref{pessimisticBLO}, the PBO can be solved by minimizing the function $\phi(x)$. However, $\phi(x)$ is the value function of a maximization problem whose feasible region depends on the solution set of another optimization problem. Consequently, $\phi(\x)$ is generally non-smooth \cite{guo2024sensitivity}, and evaluating its value and gradient (or subgradient) poses significant computational challenges. The non-smoothness of $\phi(\x)$ constitutes a primary challenge in solving PBO, rendering the direct minimization of $\phi(\x)$ difficult.

To surmount the challenge posed by the non-smoothness of $\phi(\x)$, 
we introduce a smooth approximation of $\phi(\x)$ by employing penalization and regularization techniques. 
This transforms the PBO into a tractable, smooth optimization problem, enabling the application of efficient gradient-based methods. However, calculating the gradient of this smooth approximation function requires solving an associated minimax subproblem to find its saddle point, which can be computationally demanding and complicate the implementation of gradient-based methods. To address this complexity, we propose a one-step gradient ascent-descent update strategy to obtain an inexact saddle point solution. This inexact solution is then used to construct an inexact gradient for the minimization of the smoothed objective. Through this approach, we propose SiPBA (Single-loop Pessimistic Bilevel Algorithm), a novel fully first-order single-loop gradient-based algorithm designed to solve PBO problem \eqref{pessimisticBLO}. 

\subsection{Contribution}
This paper presents the following key contributions to the study of PBO problem:

\textbf{New Smooth Approximation for PBO: }
We introduce a novel smooth approximation for PBO. This is achieved by constructing a continuously differentiable surrogate for the potentially non-smooth value function $\phi(\x)$, using penalization and regularization techniques. Based on this, we formulate a smooth approximation problem corresponding to the original PBO. The validity of this smooth approximation is rigorously established by demonstrating the asymptotic convergence of the solutions of the smoothed problem to those of the original PBO. These results are detailed in Section \ref{section singlelevelreformulation}.

\textbf{Single-Loop Algorithm (SiPBA) and Theoretical Guarantees:}
Building upon the proposed smooth approximation, we develop SiPBA (Single-loop Pessimistic Bilevel Algorithm). SiPBA is a gradient-based algorithm designed for solving PBO problems, which avoids the computation of second-order derivatives and eliminates the need for iterative inner-loop procedures to solve subproblems (Section \ref{section proposed algorithm}).
We provide a rigorous convergence analysis of SiPBA in Section \ref{section theoretical investigations}. This analysis includes the derivation of non-asymptotic convergence rates for relevant error metrics and establishes guarantees for achieving a relaxed stationarity condition for the iterates generated by the algorithm.

\textbf{Empirical Validation:}
The practical effectiveness and computational efficiency of the proposed SiPBA algorithm are validated through numerical experiments, presented in Section \ref{numerical experiment}. We evaluate SiPBA across synthetic problems, email spam classification, and hyper-representation learning. The results provide empirical evidence supporting the competitive performance of SiPBA.

\subsection{Related work}

\textbf{Optimistic Bilevel Optimization}: 
OBO has been extensively studied, with surveys detailing its theory, algorithms, and applications \cite{colson2007overview, dempe2013bilevel, dempe2020bilevel}. A common approach for solving OBO is to reduce it to a single-level problem, using Karush-Kuhn-Tucker (KKT) conditions, leading to Mathematical Programs with Complementarity Constraints (MPCC) \cite{allende2013solving, luo1996mathematical}, or through value-function-based inequality constraints \cite{ye1995optimality, outrata1990numerical}. Approximating the lower-level solution with a finite trajectory is another strategy \cite{maclaurin2015gradient,pmlr-v70-franceschi17a}. These approaches have yielded scalable and efficient algorithms suitable for large-scale machine learning tasks 
\cite{pedregosa2016hyperparameter, franceschi2018bilevel, lorraine2020optimizing, liu2020generic,liu2021towards,sow2022convergence, hong2023two,arbel2021amortized, lu2023slm,ji2021bilevel,shen2023penalty, kwon2023fully, liu2022bome, lu2024first, kwon2023penalty}. However, the max-structure at the upper level of PBO creates a more complex, three-level-like structure (min-max-min), hindering the direct application of OBO algorithms to PBO.

\textbf{Pessimistic Bilevel Optimization:}
PBO has been surveyed in \cite{liu2018pessimistic, dempe2020bilevel}.
Theoretical studies include \cite{aboussoror2001existence}, which investigates sufficient conditions for the existence of optimal solutions, and \cite{loridan1988approximate}, which studies properties of approximate solutions. Optimality conditions for PBO have been explored, including KKT-type conditions for smooth and non-smooth cases \cite{dempe2014necessary,dempe2019two}. \cite{aussel2019pessimistic} studies the relationship between PBO and its MPCC reformulation.
For PBO algorithms, \cite{aboussoror2005weak, zheng2013exact} propose penalty methods for solving weak linear PBO problems. \cite{wiesemann2013pessimistic} introduces a semi-infinite programming reformulation of PBO. \cite{lampariello2019standard} reformulates PBO as an OBO problem with a two-follower Nash game, solving it as an MPCC. \cite{zeng2020practical} transforms PBO into a minimax problem with coupled constraints, proposing methods for the linear case. More recently, \cite{benchouk2024relaxation, benchouk2025scholtes} explores relaxation methods for solving PBO’s KKT conditions. \cite{guanadaprox} combines the lower-level value function with the KKT conditions of the upper-level max problem, resulting in a constrained minimization problem solved by a gradient-based method. Several heuristic algorithms have also been proposed, though without convergence guarantees \cite{alves2018semivectorial, alekseeva2017matheuristic}. Recently, gradient-based algorithms for minimax bilevel optimization have been developed \cite{gu2021nonconvex, hustochastic, yang2024first}. However, these problems differ from PBO in that their max structure is on the upper-level variable, not the lower-level variable, making these algorithms inapplicable to PBO.
To our knowledge, fully first-order, single-loop gradient-based algorithms for solving PBO remain limited.

\section{Smooth approximation of PBO}\label{section singlelevelreformulation}

Throughout this paper, we make the following standing assumptions:

    \begin{assumption}\label{assum1}
     The upper-level objective function $F(x,y)$ is continuously differentiable, and its gradient $\nabla F(x,y)$ is Lipschitz continuous on $X \times Y$. For any fixed $x \in X$, $F(x,y)$ is $\mu$-strongly concave with respect to $y$ on $Y$ for some $\mu > 0$.
    \end{assumption}
    \begin{assumption}\label{assum2}
    The lower-level objective function $f(x,y)$ is continuously differentiable, and and its gradient $\nabla f(x,y)$ is Lipschitz continuous on $X \times Y$. For any fixed $x \in X$, $f(x,y)$ is convex with respect to $y$ on $Y$. Furthermore, $\mathcal{S}(x)$ is nonempty for any $x \in X$. For any bounded set $B \subseteq X$, there exists a bounded set $D$ such that $\mathcal{S}(x) \cap D \neq \varnothing$ for every $x \in B$.
    \end{assumption}

In this section, we introduce a smooth approximation for $\phi(x)$, leading to a smooth approximation of the PBO problem. 
All proofs for the results presented in this section are provided in Appendix \ref{proofofreformulation}.

\subsection{Smooth approximation of $\phi(x)$}

To construct a smooth approximation of $\phi(x)$, we first consider an equivalent reformulation of  $\phi(x)$ as the value function of a constrained minimax problem:
    \begin{equation}\label{constrainedminimax}
    \phi(x) =\min_{ z\in {Y} } \max_{y\in {Y}}\; \left\{ F(x, y) \quad \text{s.t.} \; f(x,y)\le f(x,z) \right\}.
	\end{equation} 
This reformulation was explored in \cite{zeng2020practical} as an application of the value function approach for designing numerical methods for PBO problem. The equality in \eqref{constrainedminimax} is justified by \cite[Lemmas 1, 2]{zeng2020practical}; for completeness, a proof is provided in Appendix \ref{equivalentphi}.

Next, we use this constrained minimax formulation to develop a smooth approximation of $\phi(x)$. To address the nonsmoothness introduced by the constraint in \eqref{constrainedminimax}, we consider a penalized approximation:
\[
\min_{ z\in {Y}} \max_{y\in {Y}}\;F(x, y)-\rho(f(x, y)-f( x, z)),
\]
where $\rho >0$ is a penalty parameter. Under the stated assumptions, this minimax problem is convex in $z$ and concave in $y$, making it computationally tractable. However, the potential non-uniqueness of the optimal $z$, can result in the value function of this penalized problem being nonsmooth with respect to $x$. To ensure smoothness and well-posedness, we introduce a regularization term for $z$ and a coupling term $\langle y, z\rangle$, leading to the following regularized objective function:
\begin{equation}\label{psi}
\psi_{\rho,\sigma}(x,y,z):=F(x,y)-\rho(f(x,y)-f(x,z))+\frac{\sigma}{2}\|z\|^2-\sigma\langle y, z\rangle,
\end{equation}
where $\sigma>0$ is a regularization parameter. This function $\psi_{\rho,\sigma}(x,y,z)$ is designed to be strongly convex in $z$ and strongly concave in $y$. Based on this, we propose the approximation for $\phi(x)$ as:
\begin{equation}\label{phi}
  \phi_{\rho,\sigma}(x):=\min_{z\in {Y}} \max_{ y \in {Y}}\; \psi_{\rho,\sigma}(x, y, z).
\end{equation}
The strong convexity-concavity of $\psi_{\rho,\sigma}$ ensures that $\phi_{\rho,\sigma}(x)$ is well defined for any $x \in X$. Furthermore, it guarantees the existence and uniqueness of a saddle point, denoted by $(y_{\rho,\sigma}^*(x),z_{\rho,\sigma}^*(x))$, and allows the interchange of minimization and maximization operators, i.e., $\phi_{\rho,\sigma}(x)=\min_{z\in {Y}} \max_{ y \in {Y}}\; \psi_{\rho,\sigma}(x, y, z) = \max_{ y \in {Y}}\min_{z\in {Y}} \psi_{\rho,\sigma}(x, y, z)$. 

It is important to highlight the role of the coupling term $\langle y,z\rangle$ in \eqref{psi}, introduced alongside the regularization term $\frac{\sigma}{2}\|z\|^2$. This coupling term is crucial for establishing Lemma \ref{lem21}, which is the foundation of the asymptotic convergence of the proposed approximation $\phi_{\rho,\sigma}(x)$ to $\phi(x)$, and of the saddle point $(y_{\rho,\sigma}^*(x),z_{\rho,\sigma}^*(x))$  as established in Theorems \ref{convergethm} and \ref{lim_yz}, respectively.

We now establish a key smoothness property of $\phi_{\rho,\sigma}(x)$: its differentiability, and provide an explicit formula for its gradient.
\begin{theorem}\label{differentiable}
    Let $\rho, \sigma >0$ be given constants. Then, for any $x \in X$, $ \phi_{\rho,\sigma}(x)$ is differentiable. Its gradient is given by:
    \begin{equation}\label{gradient_phi}
    \nabla \phi_{\rho,\sigma}(x) =  \nabla_xF(x,y_{\rho,\sigma}^*(x)) - \rho \nabla_x f(x,y_{\rho,\sigma}^*(x)) + \rho \nabla_x f(x,z_{\rho,\sigma}^*(x)) ,
    \end{equation}
    where $(y_{\rho,\sigma}^*(x),z_{\rho,\sigma}^*(x))$ is the unique saddle point 
    for the minimax problem defining $ \phi_{\rho,\sigma}(x)$ in \eqref{phi}.
\end{theorem}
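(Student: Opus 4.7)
The plan is to establish differentiability via a two-stage application of Danskin's envelope theorem, exploiting the unique saddle point structure of $\psi_{\rho,\sigma}$. I would introduce the partial value function $h(x,z) := \max_{y \in Y} \psi_{\rho,\sigma}(x,y,z)$. Under Assumption~\ref{assum1}, the map $y\mapsto \psi_{\rho,\sigma}(x,y,z)$ is $\mu$-strongly concave with modulus uniform in $(x,z)$, so the inner maximizer $y^*(x,z)$ is unique and continuous in $(x,z)$. Danskin's envelope theorem then yields that $h$ is continuously differentiable with $\nabla_x h(x,z) = \nabla_x \psi_{\rho,\sigma}(x, y^*(x,z), z)$.

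Next, I would show that $h(x,\cdot)$ is $\sigma$-strongly convex. Isolating the $z$-dependent terms gives
\[
h(x,z) = \rho f(x,z) + \frac{\sigma}{2}\|z\|^2 + \max_{y\in Y}\bigl\{F(x,y) - \rho f(x,y) - \sigma\langle y, z\rangle\bigr\}.
\]
The expression inside the max is affine in $z$ for each fixed $y$, so its supremum over $y$ is convex in $z$; combining this with the convexity of $\rho f(x,\cdot)$ from Assumption~\ref{assum2} and the $\sigma$-strongly convex quadratic delivers $\sigma$-strong convexity of $h(x,\cdot)$.

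Since $\phi_{\rho,\sigma}(x) = \min_{z\in Y} h(x,z)$ with $h(x,\cdot)$ strongly convex and continuously differentiable, the outer minimizer $z^*(x)$ is unique and continuous, and a second application of Danskin's theorem gives $\nabla \phi_{\rho,\sigma}(x) = \nabla_x h(x, z^*(x))$. By the uniqueness of the saddle point of $\psi_{\rho,\sigma}(x,\cdot,\cdot)$ already recorded above the theorem, the composite pair $(y^*(x, z^*(x)), z^*(x))$ must coincide with $(y_{\rho,\sigma}^*(x), z_{\rho,\sigma}^*(x))$. Chaining the two gradient identities and computing $\nabla_x \psi_{\rho,\sigma}$ explicitly—note that the regularizer $\tfrac{\sigma}{2}\|z\|^2$ and the bilinear coupling $-\sigma\langle y, z\rangle$ contain no $x$—produces the formula \eqref{gradient_phi}.

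The delicate step is ensuring that $h(x,\cdot)$ inherits enough convexity for the outer Danskin step to apply; the decomposition above, which peels off the $\sigma$-strongly convex quadratic before the $y$-maximization is taken, handles this cleanly. The continuity of $y^*(x,z)$ needed for the inner step is standard under uniform strong concavity and joint continuity of $\psi_{\rho,\sigma}$, and identifying the two-step optima with the genuine saddle point is immediate from uniqueness.
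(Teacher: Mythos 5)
Your proposal is correct and follows essentially the same route as the paper: both introduce the partial value function $h(x,z)=\max_{y\in Y}\psi_{\rho,\sigma}(x,y,z)$, apply a Danskin/envelope-type theorem twice (the paper uses the Bonnans--Shapiro version, explicitly verifying the inf-compactness that your strong concavity/convexity implicitly supplies), and identify the two-stage optima with the unique saddle point via the minimax equality. The only cosmetic difference is how $\sigma$-strong convexity of $h(x,\cdot)$ is obtained --- you peel off $\rho f(x,z)+\tfrac{\sigma}{2}\|z\|^2$ and use that a supremum of functions affine in $z$ is convex, whereas the paper invokes the general fact that a pointwise maximum preserves strong convexity; both are valid.
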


\subsection{Asymptotic convergence of the approximation}

Using the smooth approximation function $\phi_{\rho,\sigma}(x)$, we formulate the corresponding smoothed optimization problem intended to approximate the original PBO \eqref{pessimisticBLO}:
\begin{align}\label{MMrho}
   \min_{x\in {X}}\;\phi_{\rho,\sigma}(x).
\end{align}

This subsection validates the use of \eqref{MMrho} by establishing the asymptotic convergence properties of $\phi_{\rho,\sigma}(x)$ to $\phi(x)$, and, consequently, the convergence of the solutions of \eqref{MMrho} to those of \eqref{pessimisticBLO} as $\rho \rightarrow \infty$ and $\sigma \rightarrow 0$. We begin by establishing a relationship between  $\phi_{\rho,\sigma}(x)$ and $\phi(x)$ in the limit.

	 \begin{lemma}\label{lem21}
     Let $\{\rho_k\}$ and $\{\sigma_k\}$ be sequences such that $\rho_k \rightarrow \infty$ and $\sigma_k \rightarrow 0$. Then, for any $x \in X$, it holds that:
     \begin{equation}\label{limsupphiapp}
         	 	\limsup\limits_{k\to\infty} \phi_{\rho_k, \sigma_k}(x)\le\phi(x).
     \end{equation}
Furthermore, considering the optimal values, we have:
	\begin{align}\label{minimallimitapp}
	 		\limsup\limits_{k\to\infty} \left(\inf_{ x\in {X}}\phi_{\rho_k, \sigma_k}(x)\right)\le\inf_{x\in {X}}\phi (x).
	 	\end{align} 
	 \end{lemma}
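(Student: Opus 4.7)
The plan is to prove the pointwise bound \eqref{limsupphiapp} first, and then derive the inequality on infima \eqref{minimallimitapp} as an almost immediate corollary.

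For \eqref{limsupphiapp}, I would fix $x \in X$ and choose a specific \emph{test point} to feed into the outer minimization defining $\phi_{\rho,\sigma}(x)$. The natural choice is $\bar y \in \arg\max_{y \in \mathcal{S}(x)} F(x,y)$, so that $F(x,\bar y) = \phi(x)$ and $\bar y \in \mathcal{S}(x)$; existence of $\bar y$ follows because $F(x,\cdot)$ is strongly concave (hence coercive) in $y$ on the closed set $\mathcal{S}(x)$. Using $z = \bar y$ in the outer min yields
\[
\phi_{\rho_k,\sigma_k}(x) \;\le\; \max_{y \in Y} \psi_{\rho_k,\sigma_k}(x,y,\bar y),
\]
and I would denote the (unique) maximizer on the right by $y_k$. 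The key insight is that evaluating the inner max at $y = \bar y$ gives the one-sided bound $\psi_{\rho_k,\sigma_k}(x,y_k,\bar y) \ge \psi_{\rho_k,\sigma_k}(x,\bar y,\bar y) = F(x,\bar y) - \tfrac{\sigma_k}{2}\|\bar y\|^2$, which, combined with $f(x,y_k) \ge f(x,\bar y)$ (since $\bar y \in \mathcal{S}(x)$), will let me control both $y_k$ and the penalty term.

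The main obstacle, and the step I would spend most care on, is extracting the correct asymptotics from this chain of inequalities. First, I would argue that $\{y_k\}$ is bounded: rewriting the one-sided bound gives
\[
\rho_k\bigl(f(x,y_k) - f(x,\bar y)\bigr) \;\le\; F(x,y_k) - F(x,\bar y) + \sigma_k\|\bar y\|^2 - \sigma_k\langle y_k,\bar y\rangle,
\]
and the left-hand side is nonnegative, while strong concavity of $F(x,\cdot)$ gives $F(x,y_k) - F(x,\bar y) \le C\|y_k - \bar y\| - \tfrac{\mu}{2}\|y_k - \bar y\|^2$; unboundedness of $\{y_k\}$ would force the right-hand side to $-\infty$ while the left side stays nonnegative, a contradiction. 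Boundedness then implies $\rho_k(f(x,y_k)-f(x,\bar y))$ is bounded, so $f(x,y_k) - f(x,\bar y) \to 0$, and therefore any accumulation point $y^\star$ of $\{y_k\}$ lies in $\mathcal{S}(x)$.

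Combining these facts, along any convergent subsequence $y_{k_j} \to y^\star$, I obtain
\[
\limsup_{j\to\infty} \psi_{\rho_{k_j},\sigma_{k_j}}(x,y_{k_j},\bar y) \;\le\; \limsup_{j\to\infty} F(x,y_{k_j}) \;=\; F(x,y^\star) \;\le\; \max_{y\in\mathcal{S}(x)} F(x,y) \;=\; \phi(x),
\]
where I dropped the nonpositive penalty term and used $\sigma_k \to 0$ together with boundedness of $\{y_k\}$ to kill the regularization and coupling terms. Since $\{y_k\}$ is bounded, the same bound holds for the full sequence, giving \eqref{limsupphiapp}. For the infimum inequality \eqref{minimallimitapp}, I would simply pick, for arbitrary $\varepsilon > 0$, a point $x_\varepsilon \in X$ with $\phi(x_\varepsilon) \le \inf_{x\in X}\phi(x) + \varepsilon$, apply \eqref{limsupphiapp} at $x_\varepsilon$ to get $\limsup_k \inf_X \phi_{\rho_k,\sigma_k} \le \limsup_k \phi_{\rho_k,\sigma_k}(x_\varepsilon) \le \phi(x_\varepsilon) \le \inf_X \phi + \varepsilon$, and let $\varepsilon \downarrow 0$.
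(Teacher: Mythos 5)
Your proof is correct, but it takes a genuinely different route from the paper's. The paper argues by contradiction and works with the actual saddle point $(y_{\rho_k,\sigma_k}^*(x), z_{\rho_k,\sigma_k}^*(x))$ of the full minimax problem: it first establishes (in separate auxiliary lemmas) that the saddle points are uniformly bounded and that accumulation points of $y_{\rho_k,\sigma_k}^*(x)$ lie in $\mathcal{S}(x)$, then uses the inequality $\psi_k(x, y_k^*, z_k^*) \le \psi_k(x, y_k^*, y_k^*)$ to deduce $\phi_k(x) \le F(x, y_k^*(x))$ and derive a contradiction with $F(x,\bar y)\le\phi(x)$ for $\bar y\in\mathcal{S}(x)$. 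You instead bypass the saddle point entirely by plugging the fixed test point $z=\bar y\in\arg\max_{y\in\mathcal{S}(x)}F(x,y)$ into the outer minimization, so that $\phi_{\rho_k,\sigma_k}(x)\le\max_{y\in Y}\psi_{\rho_k,\sigma_k}(x,y,\bar y)$, and then control the resulting maximizers $y_k$ directly. What your approach buys is self-containedness: because $z$ is pinned to a known point, the boundedness of $\{y_k\}$ falls out of the strong concavity of $F(x,\cdot)$ and the sign of the penalty alone, and you do not need the heavier saddle-point boundedness lemma (whose proof invokes the $\mathcal{S}(x)\cap D$ condition of Assumption 2); the membership of accumulation points in $\mathcal{S}(x)$ also follows immediately from the boundedness of $\rho_k\bigl(f(x,y_k)-f(x,\bar y)\bigr)$. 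What the paper's route buys is reuse: its auxiliary lemmas on the saddle points are needed again for Theorem 2.6, Theorem 2.7, and the convergence analysis, so the extra machinery is amortized. Two cosmetic points: your rearranged inequality should carry $\tfrac{3\sigma_k}{2}\|\bar y\|^2$ rather than $\sigma_k\|\bar y\|^2$ on the right-hand side (harmless, since the term vanishes anyway), and the $\varepsilon$-argument for \eqref{minimallimitapp} should be supplemented by the trivial modification covering the case $\inf_{x\in X}\phi(x)=-\infty$ (replace $\inf_X\phi+\varepsilon$ by an arbitrary level $-M$); the paper instead just cites \cite[Proposition 7.30]{rockafellar2009variational} for this step.
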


Lemma \ref{lem21} provides an upper bound on the limit of the approximate values. Building upon this, we can demonstrate the convergence of the optimal values under mild conditions.
\begin{proposition}\label{prop21}
     Let $\{\rho_k\}$ and $\{\sigma_k\}$ be sequences such that $\rho_k \rightarrow \infty$ and $\sigma_k \rightarrow 0$ as $k \rightarrow \infty$. If either $X$ or $Y$ is bounded, then the optimal values converge:
     \[
     \lim\limits_{k\to\infty} \left(\inf_{ x\in {X}}\phi_{\rho_k, \sigma_k}(x)\right) = \inf_{x\in {X}}\phi (x)
     \]
\end{proposition}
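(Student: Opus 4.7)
The plan is to upgrade Lemma \ref{lem21} to an equality. Lemma \ref{lem21} already furnishes the limsup direction, so what remains is to prove $\liminf_{k\to\infty}(\inf_{x\in X}\phi_{\rho_k,\sigma_k}(x)) \ge \inf_{x\in X}\phi(x)$. The main idea is to derive a pointwise estimate of the form $\phi_{\rho_k,\sigma_k}(x) \ge \phi(x) - \tfrac{\sigma_k}{2}\|\bar y(x)\|^2$, where $\bar y(x)$ denotes the (unique, by strong concavity of $F(x,\cdot)$) maximizer of $F(x,\cdot)$ over $\mathcal{S}(x)$, and then to bound $\|\bar y(x)\|$ uniformly on $X$ under either boundedness hypothesis.

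For the pointwise estimate I would invoke the max-min form $\phi_{\rho_k,\sigma_k}(x)=\max_{y\in Y}\min_{z\in Y}\psi_{\rho_k,\sigma_k}(x,y,z)$ from Theorem \ref{differentiable} and lower-bound the outer max by testing $y=\bar y(x)\in \mathcal{S}(x)\subseteq Y$. This isolates the inner minimum
\[
\min_{z\in Y}\Bigl[\rho_k f(x,z)+\tfrac{\sigma_k}{2}\|z\|^2-\sigma_k\langle \bar y(x),z\rangle\Bigr].
\]
Completing the square yields $\tfrac{\sigma_k}{2}\|z\|^2-\sigma_k\langle \bar y(x),z\rangle=\tfrac{\sigma_k}{2}\|z-\bar y(x)\|^2-\tfrac{\sigma_k}{2}\|\bar y(x)\|^2$, and since $\bar y(x)\in\mathcal{S}(x)$ is a minimizer of $f(x,\cdot)$ on $Y$, we have $\rho_k f(x,z)\ge \rho_k f(x,\bar y(x))$ for every $z\in Y$. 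Substituting back makes the $\rho_k f(x,\bar y(x))$ contributions cancel and produces the inequality $\phi_{\rho_k,\sigma_k}(x)\ge F(x,\bar y(x))-\tfrac{\sigma_k}{2}\|\bar y(x)\|^2=\phi(x)-\tfrac{\sigma_k}{2}\|\bar y(x)\|^2$.

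The concluding step is to show $M:=\sup_{x\in X}\|\bar y(x)\|<\infty$. When $Y$ is bounded this is immediate. When $X$ is bounded, I would invoke Assumption \ref{assum2} to obtain a bounded set $D$ and a selection $y_0(x)\in\mathcal{S}(x)\cap D$ for every $x\in X$. Combining $F(x,\bar y(x))\ge F(x,y_0(x))$ with $\mu$-strong concavity of $F(x,\cdot)$ gives
\[
\tfrac{\mu}{2}\|\bar y(x)-y_0(x)\|^2\le \bigl\langle \nabla_y F(x,y_0(x)),\,\bar y(x)-y_0(x)\bigr\rangle \le \|\nabla_y F(x,y_0(x))\|\cdot\|\bar y(x)-y_0(x)\|,
\]
so $\|\bar y(x)-y_0(x)\|\le \tfrac{2}{\mu}\|\nabla_y F(x,y_0(x))\|$, and continuity of $\nabla F$ on the bounded set $X\times D$ supplies the uniform bound. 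Inserting $M$ into the pointwise estimate and taking the infimum over $x\in X$ gives $\inf_{x\in X}\phi_{\rho_k,\sigma_k}(x)\ge \inf_{x\in X}\phi(x)-\tfrac{\sigma_k}{2}M^2$, and letting $k\to\infty$ (so that $\sigma_k\to 0$) closes the argument.

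The main obstacle is the $X$-bounded case of the uniform bound on $\bar y(x)$: since $Y$ and $\mathcal{S}(x)$ may both be unbounded there, this step genuinely requires the inner-boundedness clause of Assumption \ref{assum2} together with strong concavity of $F$ in $y$. Note also that the completion-of-square trick that produces the clean gap $\tfrac{\sigma_k}{2}\|\bar y(x)\|^2$ is precisely where the coupling term $-\sigma\langle y,z\rangle$ in the definition \eqref{psi} of $\psi_{\rho,\sigma}$ earns its keep.
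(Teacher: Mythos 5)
Your proposal is correct and follows essentially the same route as the paper: the limsup direction from Lemma \ref{lem21}, the pointwise lower bound $\phi_{\rho,\sigma}(x)\ge\phi(x)-\tfrac{\sigma}{2}\|\bar y(x)\|^2$ obtained by testing $y=\bar y(x)$ in the max--min form (the paper's Lemma \ref{lem_a5}), and a uniform bound on $\|\bar y(x)\|$ over $X$ (the paper's Lemma \ref{lem_a4}). The only cosmetic difference is that you bound $\|\bar y(x)-y_0(x)\|$ directly via the strong-concavity inequality, whereas the paper argues by contradiction from $F(x_k,y^*(x_k))\to-\infty$; both rest on the same ingredients (Assumption \ref{assum2}'s bounded selection and the Lipschitz gradient of $F$, which is what actually supplies the uniform bound on $\|\nabla_y F(x,y_0(x))\|$ rather than mere continuity).
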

     
Establishing the convergence of optimal solutions (minimizers) requires additional structure related to the continuity properties of  $\phi(x)$. To this end, we introduce the assumption of lower semi-continuity.

\begin{assumption}\label{assump_lsc}
    $\phi(x)$ is lower semi-continuous (l.s.c.) on $X$. That is, for any sequence $\{x_k\} \subset X$ such that $x_k \rightarrow \bar{x} \in X$ as $k \rightarrow \infty$, it holds that, $\phi(\bar{x}) \le \liminf_{k \rightarrow \infty} \phi(x_k)$.
\end{assumption}

Lower semi-continuity is equivalent to the closedness of the function's epigraph and its level sets, and it guarantees the existence of a minimizer for $\phi(x)$ over a compact set $X$ (see, e.g., \cite[Theorem 1.9]{rockafellar2009variational}). Sufficient conditions for Assumption \ref{assump_lsc}, such as the inner semi-continuity of the lower-level solution map $\mathcal{S}(x)$, are discussed in Appendix \ref{lsc_phi}. Under this assumption,  
we can establish the following result for epi-convergence.
\begin{lemma}\label{lem_liminf}
    Assume $\phi(x)$ is lower semi-continuous on $X$. Let $\{\rho_k\}$ and $\{\sigma_k\}$ be sequences such that $\rho_k \rightarrow \infty$ and $\sigma_k \rightarrow 0$ as $k \rightarrow \infty$. Then, for any sequence $\{x_k\} \subset X$ converging to $\bar{x}$, we have:
    \begin{equation}\label{limsupepi}
        \liminf\limits_{k\to\infty}\phi_{\rho_k, \sigma_k}(x_k)\ge\phi(\bar{x}).
    \end{equation}
\end{lemma}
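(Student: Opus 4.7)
The strategy is to reduce the claim to the lower semi-continuity of $\phi$ (Assumption \ref{assump_lsc}) via a one-sided pointwise comparison of the form $\phi_{\rho_k,\sigma_k}(x_k)\ge\phi(x_k)-o(1)$. The idea is to exploit the max--min side of the saddle formulation, available thanks to the strong convexity--concavity of $\psi_{\rho,\sigma}$, by testing the outer maximum at a carefully chosen lower-level-feasible point so that the penalty term is sign-favorable.

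Concretely, using $\phi_{\rho_k,\sigma_k}(x_k)=\max_{y\in Y}\min_{z\in Y}\psi_{\rho_k,\sigma_k}(x_k,y,z)$, I would let $y_k^\star:=\arg\max_{y\in\mathcal{S}(x_k)}F(x_k,y)$, which exists uniquely by the strong concavity of $F(x_k,\cdot)$ together with the nonemptiness of $\mathcal{S}(x_k)$ (Assumption \ref{assum2}), and satisfies $\phi(x_k)=F(x_k,y_k^\star)$. Because $y_k^\star\in\mathcal{S}(x_k)$, $f(x_k,y_k^\star)\le f(x_k,z)$ for every $z\in Y$, so the penalty term $-\rho_k\bigl(f(x_k,y_k^\star)-f(x_k,z)\bigr)$ is nonnegative, and completing the square in the remaining $\sigma_k$-terms gives
\[
\psi_{\rho_k,\sigma_k}(x_k,y_k^\star,z)\;\ge\;F(x_k,y_k^\star)+\tfrac{\sigma_k}{2}\|z-y_k^\star\|^2-\tfrac{\sigma_k}{2}\|y_k^\star\|^2.
\]
Minimizing the right-hand side over $z\in Y$ (attained at $z=y_k^\star\in Y$) then yields the clean pointwise bound
\[
\phi_{\rho_k,\sigma_k}(x_k)\;\ge\;\min_{z\in Y}\psi_{\rho_k,\sigma_k}(x_k,y_k^\star,z)\;\ge\;\phi(x_k)-\tfrac{\sigma_k}{2}\|y_k^\star\|^2.
\]

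It then remains to show that $\{y_k^\star\}$ is bounded so that $\sigma_k\|y_k^\star\|^2\to 0$. I would combine Assumption \ref{assum2} (which furnishes a bounded set $D$ and points $\tilde y_k\in\mathcal{S}(x_k)\cap D$ along the bounded sequence $\{x_k\}$) with the $\mu$-strong concavity of $F(x_k,\cdot)$: the optimality $F(x_k,y_k^\star)\ge F(x_k,\tilde y_k)$ together with the quadratic upper envelope centered at the unconstrained maximizer of $F(x_k,\cdot)$ (which remains bounded by continuity on the compact set $\{x_k\}\cup\{\bar x\}$) turns this value lower bound into a uniform bound on $\|y_k^\star\|$. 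Taking $\liminf_k$ in the displayed inequality and invoking Assumption \ref{assump_lsc} then concludes
\[
\liminf_k\phi_{\rho_k,\sigma_k}(x_k)\;\ge\;\liminf_k\phi(x_k)\;\ge\;\phi(\bar x).
\]

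The main conceptual hurdle is identifying this feasible test-point $y=y_k^\star$; the more naive route of directly analyzing the actual saddle $(y_k,z_k)$ of $\psi_{\rho_k,\sigma_k}(x_k,\cdot,\cdot)$ runs into the obstacle of lower-bounding quantities like $\rho_k\bigl(f(x_k,z_k)-f(x_k,\hat y)\bigr)$ for some fixed $\hat y\in\mathcal{S}(\bar x)$: because $\hat y$ is not in general nearly optimal for $f(x_k,\cdot)$ and $\rho_k\|x_k-\bar x\|$ has no a priori control, such bounds can blow up. The feasibility $y_k^\star\in\mathcal{S}(x_k)$ of our test-point makes the penalty sign work in our favor instead, reducing the entire analysis to the boundedness of $\{y_k^\star\}$, which is the only nonroutine step.
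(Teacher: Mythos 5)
Your proposal is correct and follows essentially the same route as the paper: the pointwise bound $\phi_{\rho_k,\sigma_k}(x_k)\ge\phi(x_k)-\tfrac{\sigma_k}{2}\|y^*(x_k)\|^2$ obtained by testing the max--min form at $y^*(x_k)\in\mathcal{S}(x_k)$ is exactly the paper's Lemma \ref{lem_a5}, and your boundedness argument for $\{y^*(x_k)\}$ via strong concavity of $F$ and the bounded selection from Assumption \ref{assum2} is the paper's Lemma \ref{lem_a4}. The final passage to the limit via Assumption \ref{assump_lsc} coincides with the paper's proof of Lemma \ref{lem_liminf}.
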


Conditions \eqref{limsupphiapp} (applied with a constant sequence $x_k = x$) and \eqref{limsupepi} together imply the epi-convergence of the sequence of functions $\{\phi_{\rho_k, \sigma_k}\}$ to $\phi$ on $X$ as $k \rightarrow \infty$ (see, e.g., \cite[Proposition 7.2]{rockafellar2009variational}). This signifies that the epigraph of 
$\phi_{\rho_k, \sigma_k}(x)$ converges, in the set-theoretic sense, to the epigraph of  $\phi(x)$. Leveraging this epi-convergence property, and employing results such as \cite[Proposition 4.6]{bonnans2013perturbation} or \cite[Theorem 7.31]{rockafellar2009variational}, we can establish the subsequential convergence of minimizers of problem \eqref{MMrho}.

\begin{theorem}\label{convergethm}
 Assume $\phi(x)$ is lower semi-continuous on $X$. Let $\{\rho_k\}$ and $\{\sigma_k\}$ be sequences such that $\rho_k \rightarrow \infty$ and $\sigma_k \rightarrow 0$ as $k \rightarrow \infty$. Let $x_k \in \mathrm{argmin}_{x \in X} \phi_{\rho_k, \sigma_k}(x)$. Then, any accumulation point $\bar{x}$ of the sequence $\{x_k\}$ is an optimal solution to the original PBO \eqref{pessimisticBLO}, i.e., $\bar{x} \in \mathrm{argmin}_{x \in X} \phi(x)$.
\end{theorem}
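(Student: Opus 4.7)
The plan is to combine Lemmas \ref{lem21} and \ref{lem_liminf} in the standard epi-convergence fashion: the liminf inequality controls the limit behavior at the accumulation point $\bar{x}$, while the pointwise limsup inequality, applied at an arbitrary competitor $x \in X$, furnishes a recovery sequence that forces $\phi(\bar{x})$ below $\phi(x)$. No new analytic machinery is needed; the argument is a direct consequence of the two preceding results together with the minimizing property of $x_k$.

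Concretely, let $\bar{x}$ be an accumulation point of $\{x_k\}$ and extract a subsequence (still denoted $\{x_k\}$ for simplicity) with $x_k \to \bar{x}$. Since $\phi$ is lower semi-continuous on $X$ by assumption, Lemma \ref{lem_liminf} applies along this subsequence to give
\[
\phi(\bar{x}) \;\le\; \liminf_{k\to\infty} \phi_{\rho_k,\sigma_k}(x_k).
\]
Next, fix any competitor $x \in X$. Because $x_k \in \arg\min_{x' \in X} \phi_{\rho_k,\sigma_k}(x')$, we have $\phi_{\rho_k,\sigma_k}(x_k) \le \phi_{\rho_k,\sigma_k}(x)$ for every $k$. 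Taking $\limsup$ on both sides and invoking the pointwise inequality \eqref{limsupphiapp} of Lemma \ref{lem21} at the fixed point $x$,
\[
\limsup_{k\to\infty} \phi_{\rho_k,\sigma_k}(x_k) \;\le\; \limsup_{k\to\infty} \phi_{\rho_k,\sigma_k}(x) \;\le\; \phi(x).
\]

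Chaining the two displays yields $\phi(\bar{x}) \le \phi(x)$ for every $x \in X$, so $\bar{x} \in \arg\min_{x \in X} \phi(x)$, as claimed. The only subtle point worth emphasizing is that Lemma \ref{lem_liminf} requires the lower semi-continuity hypothesis on $\phi$, and that $\bar{x} \in X$ is automatic since $X$ is closed and $\{x_k\} \subset X$; everything else is a bookkeeping exercise with $\liminf$ and $\limsup$. Thus there is no real obstacle in this proof beyond correctly assembling the two halves of the epi-convergence statement already proved.
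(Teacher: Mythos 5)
Your proof is correct and follows essentially the same route as the paper: the paper establishes epi-convergence from Lemmas \ref{lem21} and \ref{lem_liminf} and then cites standard results (e.g., \cite[Theorem 7.31]{rockafellar2009variational}) for the convergence of minimizers, whereas you simply inline that standard argument by chaining $\phi(\bar{x}) \le \liminf_k \phi_{\rho_k,\sigma_k}(x_k) \le \limsup_k \phi_{\rho_k,\sigma_k}(x) \le \phi(x)$. The bookkeeping with the subsequence and the use of the minimizing property of $x_k$ are all handled correctly.
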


In the following, we characterize the asymptotic behavior of the saddle point $(y_{\rho,\sigma}^*(x),z_{\rho,\sigma}^*(x))$ as $\rho\rightarrow \infty$ and $\sigma \rightarrow 0$, and show that both components converge to the solution of the maximization problem that defines the value function $\phi(x)$ in~\eqref{pessimisticBLO}.
\begin{theorem}\label{lim_yz}
      Assume $\phi(x)$ is lower semi-continuous on $X$. Let $\{\rho_k\}$ and $\{\sigma_k\}$ be sequences such that $\rho_k \rightarrow \infty$ and $\sigma_k \rightarrow 0$ as $k \rightarrow \infty$ and let $\{x_k\}$ be a sequence such that $x_k \in X$ and $x_k \rightarrow \bar{x}$ as $k \rightarrow \infty$. Then, we have:
    \begin{align}\label{lem_yz_eq}
        \lim_{k\to\infty}y_{\rho_{k}, \sigma_{k}}^*(x_k)=\lim_{{k}\to\infty}z_{\rho_{k}, \sigma_{k}}^*(x_{k})=y^*(\bar{x}),
    \end{align}
    where $y^*(\bar{x}):= \arg\max_{y \in \mathcal{S}(\bar{x})}F(\bar{x},y)$.
\end{theorem}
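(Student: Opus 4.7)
The plan is to extract convergent subsequences of the saddle points $(y_k^*, z_k^*) := (y_{\rho_k,\sigma_k}^*(x_k), z_{\rho_k,\sigma_k}^*(x_k))$, show that any accumulation point equals $(y^*(\bar x), y^*(\bar x))$, and conclude by uniqueness of the subsequential limit. A central role is played by the auxiliary sequence $y^*(x_k) := \arg\max_{y \in \mathcal{S}(x_k)} F(x_k, y)$, which is well-defined and unique under Assumptions \ref{assum1} and \ref{assum2}.

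First I would establish boundedness of $\{y_k^*\}$, $\{z_k^*\}$, and $\{y^*(x_k)\}$. Using Assumption \ref{assum2}, pick $\tilde z_k \in \mathcal{S}(x_k) \cap D$ bounded. For $\{y_k^*\}$ a contradiction argument applies: if $\|y_k^*\| \to \infty$ along a subsequence, the $\mu$-strong concavity of $F(x_k, \cdot)$ forces $F(x_k, y_k^*) \to -\infty$ quadratically, while the saddle inequality $\phi_{\rho_k,\sigma_k}(x_k) \le \psi_{\rho_k,\sigma_k}(x_k, y_k^*, \tilde z_k)$ combined with $f(x_k, \tilde z_k) \le f(x_k, y_k^*)$ (which cancels the penalty) and $\sigma_k \to 0$ gives $\phi_{\rho_k,\sigma_k}(x_k) \le F(x_k, y_k^*) + o(\|y_k^*\|^2) \to -\infty$, contradicting $\liminf_k \phi_{\rho_k,\sigma_k}(x_k) \ge \phi(\bar x) > -\infty$ from Lemma \ref{lem_liminf}. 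Boundedness of $\{z_k^*\}$ then follows from the $\sigma_k$-strong convexity of $\psi_{\rho_k,\sigma_k}(x_k, y_k^*, \cdot)$ tested against $\tilde z_k$, which yields $\|z_k^* - y_k^*\| \le \|\tilde z_k - y_k^*\|$; boundedness of $\{y^*(x_k)\}$ follows from strong concavity of $F$ and Assumption \ref{assum2}.

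Next I would identify $y^*(x_k) \to y^*(\bar x)$. Any subsequential limit $y^\infty$ of $\{y^*(x_k)\}$ satisfies $y^\infty \in \mathcal{S}(\bar x)$ by the closed-graph property of $\mathcal{S}$ (which follows from Assumption \ref{assum2}), and the lower semi-continuity of $\phi$ (Assumption \ref{assump_lsc}) gives $\phi(\bar x) \le \liminf_k \phi(x_k) = \liminf_k F(x_k, y^*(x_k)) = F(\bar x, y^\infty) \le \phi(\bar x)$; equality plus strong concavity forces $y^\infty = y^*(\bar x)$. To identify $\bar y = y^*(\bar x)$, I would then test the saddle inequality $\psi_{\rho_k,\sigma_k}(x_k, y^*(x_k), z_k^*) \le \psi_{\rho_k,\sigma_k}(x_k, y_k^*, z_k^*)$ with $y = y^*(x_k) \in \mathcal{S}(x_k)$. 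Since $f(x_k, y^*(x_k)) \le f(x_k, y_k^*)$, the penalty term is nonpositive and the inequality reduces to $F(x_k, y_k^*) \ge F(x_k, y^*(x_k)) - \sigma_k \langle y^*(x_k) - y_k^*, z_k^*\rangle$. Letting $k \to \infty$ yields $F(\bar x, \bar y) \ge F(\bar x, y^*(\bar x))$; combined with $\bar y \in \mathcal{S}(\bar x)$ (obtained by dividing the same saddle inequality by $\rho_k$ and passing to the limit), the strong concavity of $F(\bar x, \cdot)$ forces $\bar y = y^*(\bar x)$.

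Finally, for $\bar z = y^*(\bar x)$, I would exploit the coupling term $-\sigma \langle y, z\rangle$ in $\psi_{\rho,\sigma}$. Completing the square in $z$ shows that $z_k^* = \arg\min_{z \in Y}\{\rho_k f(x_k, z) + (\sigma_k/2)\|z - y_k^*\|^2\}$, and the $\sigma_k$-strong convexity of this objective, combined with testing against $z = y^*(x_k)$ (for which $f(x_k, y^*(x_k)) \le f(x_k, z_k^*)$), yields the Pythagorean-type inequality $\|y^*(x_k) - y_k^*\|^2 \ge \|z_k^* - y_k^*\|^2 + \|y^*(x_k) - z_k^*\|^2$. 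Since both $y^*(x_k) \to y^*(\bar x)$ and $y_k^* \to y^*(\bar x)$, the left-hand side vanishes, forcing $\bar z = \bar y = y^*(\bar x)$. Because every convergent subsequence has the same limit, the full sequences converge. The main obstacle is the identification $y^*(x_k) \to y^*(\bar x)$: this step ties the approximate saddle points back to the exact PBO solution and requires the full strength of the lower semi-continuity hypothesis on $\phi$. It also explains why the coupling term $\sigma\langle y, z\rangle$ is essential, as it is precisely what transfers the convergence of $y_k^*$ onto $z_k^*$ in the projection step.
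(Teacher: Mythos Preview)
Your proposal is correct and follows essentially the same route as the paper: bound the saddle points, show $y^*(x_k)\to y^*(\bar x)$ via the closed-graph property of $\mathcal{S}$ together with lower semi-continuity of $\phi$, use the saddle inequality with test point $y^*(x_k)$ to force any accumulation point $\bar y$ of $y_k^*$ to satisfy $F(\bar x,\bar y)\ge\phi(\bar x)$ and $\bar y\in\mathcal{S}(\bar x)$, and then exploit the completed-square form $z_k^*=\arg\min_{z\in Y}\{\rho_k f(x_k,z)+\tfrac{\sigma_k}{2}\|z-y_k^*\|^2\}$ tested against $y^*(x_k)$ to squeeze $z_k^*$ toward $y_k^*$. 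The only cosmetic difference is that the paper first upgrades $\phi$ to continuous (deriving upper semi-continuity from outer semi-continuity of $\mathcal{S}$) before concluding $y^*(x_k)\to y^*(\bar x)$, whereas you argue directly from lower semi-continuity along subsequences; your Pythagorean inequality is also slightly sharper than the paper's $\|z_k^*-y_k^*\|^2\le\|y^*(x_k)-y_k^*\|^2$, but both yield the same conclusion.
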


\section{Single-loop gradient-based algorithm}\label{section proposed algorithm}

In this section, we introduce the Single-loop Pessimistic Bilevel Algorithm (SiPBA), a novel single-loop gradient-based method designed to solve the PBO problem \eqref{pessimisticBLO}.
The foundation of our approach is the smooth approximation problem \eqref{MMrho}, $\min_{x\in {X}}\phi_{\rho,\sigma}(x)$, developed in the previous section.

Owing to the continuous differentiability of the function $\phi_{\rho,\sigma}(x)$, gradient-based methods can be employed for solving it. However, as established in Theorem \ref{differentiable}, the computation of the gradient $\nabla \phi_{\rho,\sigma}(x)$ necessitates the saddle point solution, denoted $(y_{\rho,\sigma}^*(x),z_{\rho,\sigma}^*(x))$, of the minimax subproblem $\min_{z\in {Y}} \max_{ y \in {Y}}\psi_{\rho,\sigma}(x, y, z)$. 
Although this minimax problem is strongly convex in $z$ and strongly concave in $y$, finding its exact saddle point solution can be computationally expensive.

To mitigate this challenge, we propose constructing an inexact gradient at each iteration $k$ for updating $x^k$. Specifically, iterates $(y^k, z^k)$ are introduced to approximate the the exact saddle point solution to the minimax subproblem. At iteration $k$, given parameters $\rho_k, \sigma_k >0$ and the current iterate $x^k$, a single projected gradient ascent-descent step is applied to the minimax subproblem $\min_{z\in {Y}} \max_{ y \in {Y}}\psi_{\rho_k, \sigma_k }(x^k, y, z)$ to update $(y^k, z^k)$. The update rules are:
\[
y^{k+1} = \mathrm{Proj}_Y\left(y^k + \beta_k d_{y}^k \right), \quad z^{k+1} = \mathrm{Proj}_Y\left( z^k - \beta_k d_{z}^k \right),
\]
where $\beta_k > 0$ is the step size, $\mathrm{Proj}_Y$ represents the Euclidean projection onto to set $Y$, and the update directions $d_{y}^k$ and $d_{z}^k$ are defined as:
\begin{equation}\label{direction_yz}
    d_y^k = \nabla_y F(x^k,y^k) - \rho_k \nabla_y f(x^k,y^k) - \sigma_k z^k, \quad  d_z^k =  \rho_k \nabla_y f(x^k,z^k) + \sigma_k (z^k - y^k).
\end{equation}
Subsequently, the newly updated iterates $(y^{k+1}, z^{k+1})$ are used in place of the exact saddle point solution $(y_{\rho_k, \sigma_k}^*(x^k),z_{\rho_k, \sigma_k}^*(x^k))$ within the formula for $\nabla \phi_{\rho_k, \sigma_k}(x^k)$ (given in \eqref{gradient_phi}). This yields an inexact gradient, which serves as the update direction $d_x^k$ for the iterate $x^k$:
\begin{equation}\label{direction_x}
    d_x^k = \nabla_x F(x^k,y^{k+1}) - \rho_k \left(\nabla_x f(x^k,y^{k+1}) - \nabla_x f(x^k,z^{k+1})\right).
\end{equation}
The iterate $x^k$ is then updated as:
\[
x^{k+1} = \mathrm{Proj}_X\left(x^k - \alpha_k d_{x}^k \right),
\]
where $\alpha_k > 0$ is the step size. 

Furthermore, the parameters $\rho_k$ and $\sigma_k$ are updated throughout the iterative process, specifically by ensuring $\rho_k \rightarrow \infty$ and $\sigma_k \rightarrow 0$ as $k \rightarrow \infty$. The precise update strategies for selecting these parameters, along with the step sizes $\alpha_k$ and $\beta_k$, are detailed in Theorem \ref{thm1} presented in Section \ref{section theoretical investigations}.

Based on the preceding components, we now formally present the Single-loop Pessimistic Bilevel Algorithm (SiPBA) for solving the PBO problem \eqref{pessimisticBLO} in Algorithm \ref{algorithm}. In many practical applications where projections onto $X$ and $Y$ are computationally efficient, SiPBA offers the significant advantage of a single-loop structure, making it straightforward to implement.

\begin{algorithm}[h]
    \SetKw{KwInput}{Input:}
    \caption{ {\bf Si}ngle-loop {\bf P}essimistic {\bf B}ilevel {\bf A}lgorithm ({\bf SiPBA})}\label{algorithm}
    \KwInput{Initial points $(x^0, y^0, z^0) \in X \times Y \times Y$, stepsizes $\alpha_k, \beta_k > 0$, parameters $\rho_k,\sigma_k > 0$}\\
    \For{$k = 0,1,\dots,K-1$}{
        calculate $d^k_y$ and $d_z^k$ as in \eqref{direction_yz} and
        update 
\[
y^{k+1} = \mathrm{Proj}_Y\left(y^k + \beta_k d_{y}^k \right), \quad z^{k+1} = \mathrm{Proj}_Y\left( z^k - \beta_k d_{z}^k \right); %
\]%
calculate $d^k_x$ as in \eqref{direction_x} and
        update 
        \[
x^{k+1} = \mathrm{Proj}_X\left(x^k - \alpha_k d_{x}^k \right).
\]
    }
\end{algorithm}
\vspace{-10pt}

\section{Convergence analysis}\label{section theoretical investigations}

This section establishes the convergence properties of the proposed SiPBA. All proofs for the results presented herein are provided in Appendix \ref{append:convergence}.

Throughout this section, we introduce an additional assumption regarding the boundedness of $X$.

\begin{assumption}\label{assum4}
    The set $X$ is compact.
\end{assumption}

To streamline the notation in this section, given the sequences $\rho_k$ and $\sigma_k$, we adopt the following shorthand: $\phi_k(x)$, $\psi_k(x,y,z)$, $y^*_k(x)$ and $z^*_k(x)$ will denote $\phi_{\rho_k,\sigma_k}(x)$, $\psi_{\rho_k,\sigma_k}(x,y,z)$, $y^*_{\rho_k,\sigma_k}(x)$ and $z^*_{\rho_k,\sigma_k}(x)$, respectively. Furthermore, let $u:=(y,z)$, $u^k:=(y^k,z^k)$ and $u^*_k(x) = (y^*_k(x), z^*_k(x))$.
Let $L_F$ and $L_f$ denote the Lipschitz constants of $\nabla F(x,y)$ and $\nabla f(x,y)$ on $X \times Y$, respectively.

To facilitate the convergence analysis of SiPBA, we introduce a merit function $V_k$ incorporating dynamic positive coefficients $a_k > 0$ and $b_k >0$:
\begin{equation}\label{V_def}
      V_k = a_k(\phi_k(x^k) - \underline{\phi}) + b_k\| u^{k}- u_{k}^*(x^k)\|^2, 
\end{equation}
where $\underline{\phi}$ represents a uniform lower bound for $\phi_k(x^k)$, such that $\phi_k(x^k) \ge \underline{\phi}$ for all $k$. The existence of such a lower bound is formally established in Lemma \ref{lower_phi} in the Appendix, under the condition that $\phi(x)$ is bounded below on $X$. Consequently, as $a_k > 0$, $b_k >0$, $\phi_k(x^k) \ge \underline{\phi}$, and the squared norm term is inherently nonnegative, $V_k$ is always nonnegative.

Through a careful selection of the parameters $\rho_k, \sigma_k$, step sizes $\alpha_k, \beta_k$, and merit function coefficients $a_k, b_k$, we establish the following descent property for the merit function $V_k$.

\begin{proposition}\label{prop4.1}
    Let $\{(x^k, y^k, z^k)\}$ be the sequence generated by SiPBA(Algorithm \ref{algorithm}) with parameters selected as:
    \begin{equation}\label{par}
      \alpha_k = \alpha_0 k^{-s}, \quad \beta_k = \beta_0 k^{-2p-q}, \quad  \sigma_k = \sigma_0 k^{-q}, \quad \rho_k = \rho_0 k^p,
    \end{equation}
    with $\alpha_0, \beta_0, \sigma_0, \rho_0, s, p, q >0$. Assume that $s > t + 4p+2q$, $t > 4p + 4q$ and $p,q < 1$. If $\beta_0/\sigma_0$ is sufficiently small, then for all sufficiently large $k$, the following inequality holds:
    \begin{equation}\label{prop4.1_eq}
         V_{k+1} - V_k \le -  \frac{a_k}{4\alpha_k} \|x^{k+1}-x^{k}\|^2 -\frac{1}{4}b_k \beta_k\bar{\sigma}_k \|u^{k}-u_{k}^*(x^{k})\|^2  + \zeta_k,
    \end{equation}
where $V_k$ is defined in \eqref{V_def} with $a_k = k^{-s}$, $b_k = k^{-t}$,  $\bar{\sigma}_k = \min\{\sigma_k, \mu\}$, and $\{\zeta_k\}$ is a summable sequence, i.e., $\sum_{k=0}^{\infty} \zeta_k < \infty$. 
\end{proposition}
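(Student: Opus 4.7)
I would decompose $V_{k+1}-V_k$ into an upper-level contribution $\Delta^\phi_k=a_{k+1}(\phi_{k+1}(x^{k+1})-\underline\phi)-a_k(\phi_k(x^k)-\underline\phi)$ and a saddle-point tracking contribution $\Delta^u_k=b_{k+1}\|u^{k+1}-u^*_{k+1}(x^{k+1})\|^2-b_k\|u^k-u^*_k(x^k)\|^2$, estimate each, and then show that all residual cross-terms either collapse into the two negative terms on the right of \eqref{prop4.1_eq} or aggregate into a summable sequence. The choice of the exponent inequalities $s>t+4p+2q$ and $t>4p+4q$, together with $p,q<1$ and smallness of $\beta_0/\sigma_0$, is precisely what makes these absorptions possible.

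For $\Delta^\phi_k$, I would first split it as $a_k(\phi_k(x^{k+1})-\phi_k(x^k)) + a_{k+1}(\phi_{k+1}(x^{k+1})-\phi_k(x^{k+1})) + (a_{k+1}-a_k)(\phi_k(x^{k+1})-\underline\phi)$. The last piece is nonpositive since $a_k$ is decreasing and $\phi_k \ge \underline\phi$, and the middle piece can be bounded using a continuity-in-parameters estimate for $\phi_{\rho,\sigma}$ deduced from \eqref{phi}--\eqref{psi} and the boundedness of the saddle point on the compact $X$. For the descent piece I would use the $\mathcal{C}^{1,1}$-property of $\phi_k$ (by Theorem \ref{differentiable} together with a sensitivity analysis of the strongly convex-concave saddle point yielding $L_{\phi_k}=O(\rho_k^2/\sigma_k)$), the projection-optimality inequality $\langle d_x^k,x^{k+1}-x^k\rangle\le-\|x^{k+1}-x^k\|^2/\alpha_k$, and the inexact-gradient bound $\|\nabla\phi_k(x^k)-d_x^k\|\le C\rho_k\|u^{k+1}-u^*_k(x^k)\|$ (immediate from \eqref{gradient_phi} and Lipschitzness of $\nabla F,\nabla f$), obtaining
\[
a_k(\phi_k(x^{k+1})-\phi_k(x^k))\le -\frac{a_k}{4\alpha_k}\|x^{k+1}-x^k\|^2 + C_1 a_k\alpha_k\rho_k^2\|u^{k+1}-u^*_k(x^k)\|^2,
\]
valid once $\alpha_k L_{\phi_k}$ is small enough, which is guaranteed by $s>q+2p$.

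For $\Delta^u_k$, since $\psi_k$ is $\min\{\mu,\sigma_k\}$-strongly convex-concave with gradient-Lipschitz constant $O(\rho_k L_f+\sigma_k)$, a standard one-step projected gradient ascent-descent analysis gives the contraction $\|u^{k+1}-u^*_k(x^k)\|^2\le(1-c\beta_k\bar\sigma_k)\|u^k-u^*_k(x^k)\|^2$, where the step-size condition reduces to smallness of $\beta_k\rho_k^2/\sigma_k\asymp\beta_0\rho_0^2/\sigma_0$ (constant in $k$ under the choices \eqref{par}), explaining the hypothesis on $\beta_0/\sigma_0$. Next I would pass from $u^*_k(x^k)$ to $u^*_{k+1}(x^{k+1})$ through the Young decomposition $\|a+b\|^2\le(1+\eta)\|a\|^2+(1+\eta^{-1})\|b\|^2$ with $\eta=c\beta_k\bar\sigma_k/2$, and control the drift $\|u^*_{k+1}(x^{k+1})-u^*_k(x^k)\|^2$ by splitting it into an $x$-component of size $L_{u^*}^2\|x^{k+1}-x^k\|^2$ with $L_{u^*}=O(\rho_k/\bar\sigma_k)$, and a parameter-change component of size $O(\bar\sigma_k^{-2})(|\rho_{k+1}-\rho_k|^2+|\sigma_{k+1}-\sigma_k|^2)$. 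This produces
\[
\Delta^u_k\le -\tfrac{c}{2}b_k\beta_k\bar\sigma_k\|u^k-u^*_k(x^k)\|^2 + \tfrac{C_2 b_k\rho_k^2}{\beta_k\bar\sigma_k^3}\|x^{k+1}-x^k\|^2 + \xi_k,
\]
where $\xi_k$ collects the parameter-drift term and the nonpositive $(b_{k+1}-b_k)$-contribution.

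Finally I would combine the two estimates. The cross term $C_1 a_k\alpha_k\rho_k^2\|u^{k+1}-u^*_k(x^k)\|^2 \asymp k^{-2s+2p}\|u^k-u^*_k(x^k)\|^2$ is absorbed into half of the contraction term $\tfrac{c}{2}b_k\beta_k\bar\sigma_k\|u^k-u^*_k(x^k)\|^2\asymp k^{-t-2p-2q}\|u^k-u^*_k(x^k)\|^2$, the requirement being $s>t+4p+2q$ so that the ratio tends to $0$; this leaves exactly $-\tfrac14 b_k\beta_k\bar\sigma_k\|u^k-u^*_k(x^k)\|^2$. The $x$-drift factor $b_k\rho_k^2/(\beta_k\bar\sigma_k^3)\asymp k^{-t+4p+4q}$ is bounded by a constant under $t>4p+4q$, and is absorbed into half of $\tfrac{a_k}{4\alpha_k}\|x^{k+1}-x^k\|^2\equiv \tfrac{1}{4\alpha_0}\|x^{k+1}-x^k\|^2$, leaving $-\tfrac{a_k}{4\alpha_k}\|x^{k+1}-x^k\|^2$ after redefining constants. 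All remaining pieces — the parameter-change term in $\phi$, the parameter-drift $\xi_k$, and the $\phi$-related Lipschitz residual in $x$ — are $O(k^{-1-\varepsilon})$ for some $\varepsilon>0$ because $|\rho_{k+1}-\rho_k|=O(k^{p-1})$, $|\sigma_{k+1}-\sigma_k|=O(k^{-q-1})$, $|a_{k+1}-a_k|=O(k^{-s-1})$ with $p,q<1$, and hence collect into a summable $\zeta_k$. The main obstacle is a clean sensitivity analysis of the parameterized saddle point $(\rho,\sigma,x)\mapsto u^*_{\rho,\sigma}(x)$ in all three arguments, since $\bar\sigma_k\to 0$ makes every such sensitivity constant grow, and one must verify that the resulting polynomial rates are exactly compatible with the exponent inequalities in the hypothesis.
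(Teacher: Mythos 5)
Your proposal is correct and follows essentially the same route as the paper: the same merit-function decomposition, the same key estimates (one-step contraction of $\|u^{k+1}-u_k^*(x^k)\|$, Lipschitzness of $u_k^*(\cdot)$ in $x$ with constant $O(\rho_k/\bar\sigma_k)$, parameter-drift of the saddle point of order $O(\bar\sigma_k^{-1})(|\rho_{k+1}-\rho_k|+|\sigma_{k+1}-\sigma_k|)$, and $L_{\phi_k}=O(\rho_k^2/\bar\sigma_k)$), and the same absorption of cross terms using the exponent inequalities, with the residuals telescoping or decaying like $O(k^{-1-\varepsilon})$. The only cosmetic difference is that you retain the factor $a_k$ in the cross term (giving $k^{-2s+2p}$ rather than the paper's $k^{-s+2p}$ comparison), which is a slightly sharper bookkeeping but leads to the same conclusion under the stated hypotheses.
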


Using this descent property of $V_k$, we establish the following convergence result and derive non-asymptotic convergence rates for the error terms $\| x^{k+1}-x^{k}\|/\alpha_k$ and $\|u^{k}-u_{k}^*(x^{k})\|$.

 \begin{theorem}\label{thm1}
         Let $\{(x^k, y^k, z^k)\}$ be the sequence generated by SiPBA(Algorithm \ref{algorithm}) with parameters selected as in \eqref{par}.
Suppose that the function $\phi(x)$ is bounded below on the set $X$. Assume  further that  $ 0 < s < 1/2$, $0 < p,q < 1$ and $8p + 8q \le s$. If $ \beta_0/\sigma_0$ is sufficiently small, then the following hold:
    \[
    \min_{0<k<K} \frac{1}{\alpha_k^2} \| x^{k+1}-x^{k}\|^2 = \mathcal{O}(1/K^{1-2s}), 
\quad \text{and} \quad
    \min_{0<k<K}  \|u^{k}-u_{k}^*(x^{k})\|^2 = \mathcal{O}(1/K^{1-6p-7q}).
    \]
    Moreover,
    \[
    \underset{k \rightarrow \infty}{\lim\inf}\;\|x^{k} - \mathrm{Proj}_X\left(x^k - \alpha_k \nabla \phi_k(x^k) \right) \|/\alpha_k = 0, \quad \text{and} \quad
\underset{k \rightarrow \infty}{\lim\inf}\;\| u^{k}-u_{k}^*(x^{k}) \| = 0.
    \]
     \end{theorem}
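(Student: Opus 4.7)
The plan is to take the descent inequality \eqref{prop4.1_eq} from Proposition~\ref{prop4.1} as the sole engine, telescope it to obtain two convergent series, apply a standard ``$\min \le$ weighted average'' argument to extract the non-asymptotic rates, and then upgrade to honest pointwise limits via contraction-plus-drift analysis on the inner $(y,z)$-iterates.

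First I would verify the hypotheses of Proposition~\ref{prop4.1} for a concrete choice of the merit-function exponent. Take $t := 4p + 5q$: the lower bound $t > 4p + 4q$ holds since $q > 0$, and the upper bound $s > t + 4p + 2q = 8p + 7q$ follows from $8p + 8q \le s$ combined with $q > 0$. The remaining requirements ($p, q < 1$ and $\beta_0/\sigma_0$ small) are immediate, while $\phi$ bounded below on $X$ is exactly what Lemma~\ref{lower_phi} needs in order to produce the lower bound $\underline{\phi}$ used in $V_k$, so $V_k \ge 0$.

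With these choices, \eqref{prop4.1_eq} holds for all $k \ge k_0$ large enough. Telescoping from $k_0$ to $K-1$ and using $V_K \ge 0$ plus summability of $\{\zeta_k\}$ delivers
\[
\sum_{k \ge k_0} \frac{a_k}{4\alpha_k}\|x^{k+1}-x^k\|^2 < \infty, \qquad \sum_{k \ge k_0} \frac{b_k \beta_k \bar{\sigma}_k}{4}\|u^k - u_k^*(x^k)\|^2 < \infty.
\]
Writing $r_k := \|x^{k+1}-x^k\|/\alpha_k$ and noting $a_k/\alpha_k = 1/\alpha_0$ is constant, the first series becomes $\sum_k \|x^{k+1}-x^k\|^2 < \infty$; the elementary bound $\min_k r_k^2 \le (\sum_k \alpha_k^2 r_k^2)/(\sum_k \alpha_k^2)$ together with $\sum_{k=1}^K \alpha_k^2 \asymp K^{1-2s}$ (valid because $2s < 1$) yields the first rate. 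For the second, $\bar{\sigma}_k = \sigma_k$ for large $k$, so $b_k\beta_k\bar{\sigma}_k = \beta_0\sigma_0 k^{-(t+2p+2q)} = \beta_0\sigma_0 k^{-(6p+7q)}$; since $8p+8q \le s < 1/2$ forces $6p + 7q < 1$, one has $\sum_{k=1}^K k^{-(6p+7q)} \asymp K^{1-6p-7q}$, and the same averaging trick gives the second rate.

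The main obstacle is upgrading these rates to pointwise limits, since summability alone only yields the $\liminf$-versions. To bridge the gap I would exploit the strong convex-concave structure of $\psi_k$: a single projected ascent/descent step on $(y,z)$ with $\beta_k$ small obeys a contraction $\|u^{k+1} - u_k^*(x^k)\|^2 \le (1 - c\beta_k \bar\sigma_k)\|u^k - u_k^*(x^k)\|^2$, while the saddle-point map $(x, \rho, \sigma) \mapsto u_{\rho, \sigma}^*(x)$ is Lipschitz with modulus $\mathcal{O}(1/\bar\sigma_k)$. Combining the contraction with this drift estimate yields a recursion
\[
s_{k+1}^2 \le (1 - c' \beta_k \bar\sigma_k) s_k^2 + \eta_k, \qquad s_k := \|u^k - u_k^*(x^k)\|,
\]
where $\eta_k$ collects terms like $\bar\sigma_k^{-2}\bigl(\|x^{k+1}-x^k\|^2 + (\rho_{k+1}-\rho_k)^2 + (\sigma_{k+1}-\sigma_k)^2\bigr)$. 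Since $\sum_k \beta_k\bar\sigma_k = \sum_k k^{-(2p+2q)}$ diverges (because $2p + 2q \le s/4 < 1$) and the exponent balance $8p + 8q \le s$ drives $\eta_k/(\beta_k\bar\sigma_k) \to 0$, a Polyak-type lemma forces $s_k \to 0$ with a quantitative rate that also implies $\rho_k s_k \to 0$. From the triangle estimate
\[
\frac{\|x^k - \mathrm{Proj}_X(x^k - \alpha_k \nabla\phi_k(x^k))\|}{\alpha_k} \le r_k + \|d_x^k - \nabla\phi_k(x^k)\| \le r_k + (L_F + 2\rho_k L_f)s_k,
\]
the stationarity limit reduces to verifying $r_k \to 0$, which follows from $r_k \le \|d_x^k\|$ together with uniform control of $\|\nabla\phi_k(x^k)\|$ via the saddle-point optimality conditions (which tie $y_k^* - z_k^*$ to $(\rho_k/\sigma_k)\nabla_y f(x^k, z_k^*)$) and the inexact-gradient error just bounded. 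The technical core is thus the contraction-versus-drift balance in the recursion above, in the face of the Lipschitz modulus of $u_{\rho,\sigma}^*$ deteriorating like $1/\sigma$.
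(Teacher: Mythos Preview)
Your rate derivation matches the paper's: the same choice $t = 4p+5q$, the same telescoping of Proposition~\ref{prop4.1}, and the same arithmetic extracting $\mathcal{O}(K^{-(1-2s)})$ and $\mathcal{O}(K^{-(1-6p-7q)})$ from the two weighted series.

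You are right that summability of $\sum_k k^{-2s} r_k^2$ and $\sum_k k^{-(6p+7q)} s_k^2$ (with weights tending to zero) does not by itself deliver the pointwise limits. The paper's own proof simply asserts that ``the summability implies $\lim_k \|x^{k+1}-x^k\|/\alpha_k = \lim_k \|u^k - u_k^*(x^k)\| = 0$'' and moves on, so your contraction-plus-drift program for $s_k \to 0$ (and the sharper $\rho_k s_k \to 0$) is more careful than what the paper offers; the recursion you write down is exactly Lemma~\ref{udescentlemma}, and a Polyak-type lemma is the right tool once one confirms $\sum_k \beta_k\bar\sigma_k = \infty$ and $\eta_k/(\beta_k\bar\sigma_k) \to 0$.

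The genuine gap is in your last step for $r_k \to 0$. From $r_k \le \|d_x^k\|$ together with a uniform bound on $\|\nabla\phi_k(x^k)\|$ you obtain at best that $r_k$ is \emph{bounded}, not that it vanishes; nothing in that argument forces $r_k \to 0$, so the stationarity limit does not follow. Moreover, the uniform bound on $\|\nabla\phi_k(x^k)\|$ is not free: the formula from Theorem~\ref{differentiable} carries $\rho_k(\nabla_x f(x^k,y_k^*) - \nabla_x f(x^k,z_k^*))$, hence a factor of order $\rho_k\|y_k^*-z_k^*\|$, and Lemma~\ref{saddlepointboundedness} bounds $\|y_k^*-z_k^*\|$ but not $\rho_k\|y_k^*-z_k^*\|$. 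The saddle-point condition you invoke reads $\sigma_k(y_k^*-z_k^*) \in \rho_k\nabla_y f(x^k,z_k^*) + \mathcal{N}_Y(z_k^*)$, which controls $\rho_k\nabla_y f$ (modulo the normal cone), not $\rho_k\nabla_x f$. So both the boundedness of $\nabla\phi_k$ and the passage from bounded $r_k$ to $r_k \to 0$ need a different idea; the paper itself does not supply one either.
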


Based on Theorem \ref{thm1}, a practical parameter selection strategy for SiPBA is provided in Appendix~\ref{param_select}.
Furthermore,  we can establish a modified stationarity result for the iterates $x^k$ generated by SiPBA in terms of the
$\epsilon$-subdifferential (cf. \cite[Theorem 1.26]{mordukhovich2018variational}).

\begin{corollary}\label{thm2}
Assume $\phi(x)$ is lower semi-continuous on $X$. Let $\{(x^k, y^k, z^k)\}$ be the sequence generated by SiPBA(Algorithm \ref{algorithm}) with parameters chosen as specified in Theorem \ref{thm1}. Suppose that the function $\phi(x)$ is bounded below on the set $X$. 
If $ \beta_0/\sigma_0$ is sufficiently small, then there exists a subsequence $\{x^{k_j}\}$ such that for any $\epsilon >0$ and $\tilde{\epsilon} >0$, there exists an integer $K > 0$ such that for all $k_j>K$, there exists a corresponding $\delta_j > 0$ for which the following inequality holds:
\[
\phi(x) + \epsilon \|x -x^{k_j}\| \ge \phi(x^{k_j})- \tilde{\epsilon}, \qquad \forall x \in \mathbb{B}_{\delta_{j}}(x^{k_j}) \cap X.
\]
\end{corollary}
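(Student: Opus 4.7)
The plan is to combine two ingredients: (i) an approximate first-order stationarity of the iterates $x^k$ with respect to the smooth surrogate $\phi_k$, which follows from Theorem \ref{thm1}, and (ii) the lower semi-continuity of $\phi$ (Assumption \ref{assump_lsc}) together with the pointwise limsup $\limsup_k \phi_k(x) \le \phi(x)$ from Lemma \ref{lem21} and the epi-liminf from Lemma \ref{lem_liminf}. In the final inequality, the $\epsilon\|x-x^k\|$ slack will absorb the stationarity gap while the $\tilde\epsilon$ slack will absorb the semi-continuity gap.

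First, I would extract a variational inequality at $x^k$ for $\phi_k$. The update $x^{k+1}=\mathrm{Proj}_X(x^k-\alpha_k d_x^k)$ on the convex set $X$ is equivalent to $(x^k-x^{k+1})/\alpha_k - d_x^k \in N_X(x^{k+1})$, hence
\begin{equation*}
\langle d_x^k, x-x^{k+1}\rangle \ge -\frac{\|x^{k+1}-x^k\|}{\alpha_k}\,\|x-x^{k+1}\|, \qquad \forall x \in X.
\end{equation*}
By Theorem \ref{thm1}, $\|x^{k+1}-x^k\|/\alpha_k\to 0$ and $\|u^k-u_k^*(x^k)\|\to 0$; combining the latter with the Lipschitz continuity of $\nabla F,\nabla f$ and a contractive control of one projected ascent-descent step on the strongly convex-concave $\psi_k$, one obtains $\|d_x^k-\nabla \phi_k(x^k)\|\to 0$. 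Together with the $C^1$ smoothness of $\phi_k$ (Theorem \ref{differentiable}), this yields: for any $\epsilon'>0$ and all sufficiently large $k$, there exists $\delta_k^{(1)}>0$ with
\begin{equation*}
\phi_k(x) \ge \phi_k(x^k) - \epsilon'\,\|x-x^k\|, \qquad \forall x \in \mathbb{B}_{\delta_k^{(1)}}(x^k)\cap X.
\end{equation*}

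Second, I would transfer from $\phi_k$ to $\phi$. Lower semi-continuity of $\phi$ at $x^k$ provides, for any $\tilde\epsilon>0$, a $\delta_k^{(2)}>0$ with $\phi(x)\ge \phi(x^k)-\tilde\epsilon$ on $\mathbb{B}_{\delta_k^{(2)}}(x^k)\cap X$. Setting $\delta_k=\min\{\delta_k^{(1)},\delta_k^{(2)}\}$ and using $\epsilon\|x-x^k\|\ge 0$, the target inequality
\begin{equation*}
\phi(x) + \epsilon\,\|x-x^k\| \ge \phi(x^k) - \tilde\epsilon
\end{equation*}
holds on $\mathbb{B}_{\delta_k}(x^k)\cap X$. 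The Step 1 bound guarantees that $x^k$ is genuinely near-critical rather than an arbitrary point; moreover Lemmas \ref{lem21} and \ref{lem_liminf} can be invoked to control $|\phi_k(x^k)-\phi(x^k)|$ in the limit, so the $\epsilon'$-bound on $\phi_k$ can be passed to $\phi$ with only an additional $o(1)$ loss that is absorbed by enlarging $K_0$.

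The main obstacle is controlling the gradient inexactness $\|d_x^k-\nabla \phi_k(x^k)\|$ when $\rho_k\to\infty$: the naive Lipschitz bound scales like $\mathcal{O}(\rho_k\|u^{k+1}-u_k^*(x^k)\|)$, which need not vanish from Theorem \ref{thm1} alone. I would address this by combining the descent inequality \eqref{prop4.1_eq} with the parameter schedule \eqref{par} under the constraints of Theorem \ref{thm1} ($s<1/2$, $0<p,q<1$, $8p+8q\le s$), which should force $\rho_k\|u^{k+1}-u_k^*(x^k)\|\to 0$ through the strong convex-concavity of $\psi_k$; any residual error not eliminated this way is absorbed into $\tilde\epsilon$ by choosing $K_0$ larger.
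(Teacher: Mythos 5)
Your Step 2 collapses the corollary into a vacuous statement: you derive the final inequality from lower semi-continuity of $\phi$ at $x^k$ alone (which gives $\phi(x)\ge\phi(x^k)-\tilde\epsilon$ on a small ball) plus $\epsilon\|x-x^k\|\ge 0$. That argument never uses the near-stationarity from Step 1, would apply to an arbitrary sequence $\{x^k\}$ with $K_0=1$, and says nothing about the algorithm. The intended content — and what the paper actually proves — is that the inequality certifies $x^k$ as an approximate stationary point of $\phi$: the $\epsilon\|x-x^k\|$ term absorbs the residual gradient of the surrogate $\phi_k$ at $x^k$, and the $\tilde\epsilon$ term absorbs the \emph{approximation gap between $\phi_k$ and $\phi$}, not the modulus of lower semi-continuity of $\phi$ at $x^k$. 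The paper's route is: (i) Theorem \ref{thm1} gives $\|x^k-\mathrm{Proj}_X(x^k-\alpha_k\nabla\phi_k(x^k))\|/\alpha_k\to 0$, whence $\phi_k(x)\ge\phi_k(x^k)-\epsilon\|x-x^k\|$ on a ball (your Step 1, essentially); (ii) $\phi_k(x^k)\ge\phi(x^k)-\epsilon$ for large $k$, via Lemma \ref{lem_a5} and the uniform bound on $\|y^*(x^k)\|$ from Lemma \ref{lem_a4}; (iii) $\phi_k(x)\le\phi(x)+\epsilon$ for \emph{all} $x\in X$ and all large $k$ (Lemma \ref{lemb11}). Chaining (iii), (i), (ii) gives the claim with $\tilde\epsilon$ coming from the surrogate error.

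Even if you had intended the non-vacuous route, your mechanism for transferring the $\phi_k$-bound to $\phi$ has a genuine gap. You propose to invoke Lemmas \ref{lem21} and \ref{lem_liminf} to control $|\phi_k(x^k)-\phi(x^k)|$ "in the limit", but Lemma \ref{lem21} is a pointwise $\limsup$ at a \emph{fixed} $x$ — it does not give an upper bound $\phi_k(x)\le\phi(x)+\epsilon$ uniformly over the ball $\mathbb{B}_{\delta_k}(x^k)$, nor along the moving sequence $\{x^k\}$ — and Lemma \ref{lem_liminf} only gives a one-sided $\liminf$ along convergent sequences. The paper needs, and separately proves, the uniform statement (Lemma \ref{lemb11}) by a compactness-plus-contradiction argument over $X$, reusing the saddle-point boundedness of Lemma \ref{saddlepointboundedness} and Lemma \ref{valueofy}; this is the missing ingredient in your sketch. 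As a side remark, your worry about $\rho_k\|u^{k+1}-u^*_k(x^k)\|$ is moot for this corollary: the approximate stationarity with respect to $\nabla\phi_k$ is already asserted as a conclusion of Theorem \ref{thm1} and can simply be cited.
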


\section{Numerical experiments}\label{numerical experiment}
To evaluate the performance of SiPBA, we conducted comprehensive validation through both synthetic examples and real-world applications. 
All computational experiments were performed on a server provisioned with dual Intel Xeon Gold 5218R CPUs (a total of 40 cores/80 threads, with 2.1-4.0 GHz) and an NVIDIA H100 GPU. Detailed information regarding the specific implementation of algorithms, along with the configurations for each experimental setup, is available in Appendix \ref{experiment}.

\subsection{Synthetic example}\label{toyexample}
To empirically demonstrate the performance of SiPBA, we consider the following synthetic PBO:
\begin{align}\label{constrainedtoy}
  &\min_{\x\in[0.1,10]^n}\max_{\y \in \mathbb{R}^n}\frac{1}{n}\|\x-\mathbf{e}\|^2-\|\y-\mathbf{e}\|^2,
  \quad\text{s.t.}\;
    \y\in\underset{\y'\in[\frac{1}{2\sqrt{n}},\infty)^n}{\arg\min}
    \bigl\|\langle \mathbf{e},\y'\rangle-\|\x\|\bigr\|^2.
\end{align}
where $\mathbf{e}$ denotes the all-ones vector of appropriate dimension. For $n\ge2$, it can be shown that the unique optimal solution is given by \((\x^*,\y^*)=(\mathbf{e}/2,\mathbf{e}/(2\sqrt{n}))\). 
The performance is assessed by the relative error, 
$\epsilon_{rel}=(\|x^k-x^{*}\|^2+\|y^k-y^{*}\|^2)/(\|x^0-x^{*}\|^2+\|y^0-y^{*}\|^2)$. For all experiments in this subsection, the results are averaged over 10 independent runs, each initialized from a distinct, randomly generated starting point. The initial point $(x^0, y^0)$ is generated by sampling each component of $x^0$ from a uniform distribution $\mathcal{U}[0.1,10]$ and each component of $y^0$ from $\mathcal{U}[1/(2\sqrt{n}),\,10]$. 
\begin{figure}[htbp]
	 	\centering
        \vspace{-2.0em}
	 	 \subfloat[Convergence curve]{\includegraphics[width=0.40\textwidth]{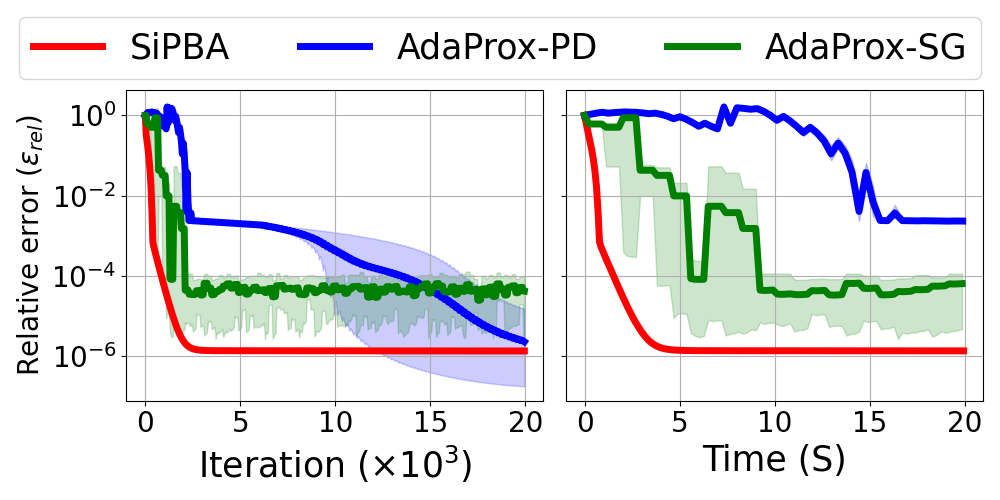}
	 	}
            \subfloat[Time and Iter. v.s.  Dimensions]{
	 		\includegraphics[width=0.30\textwidth]{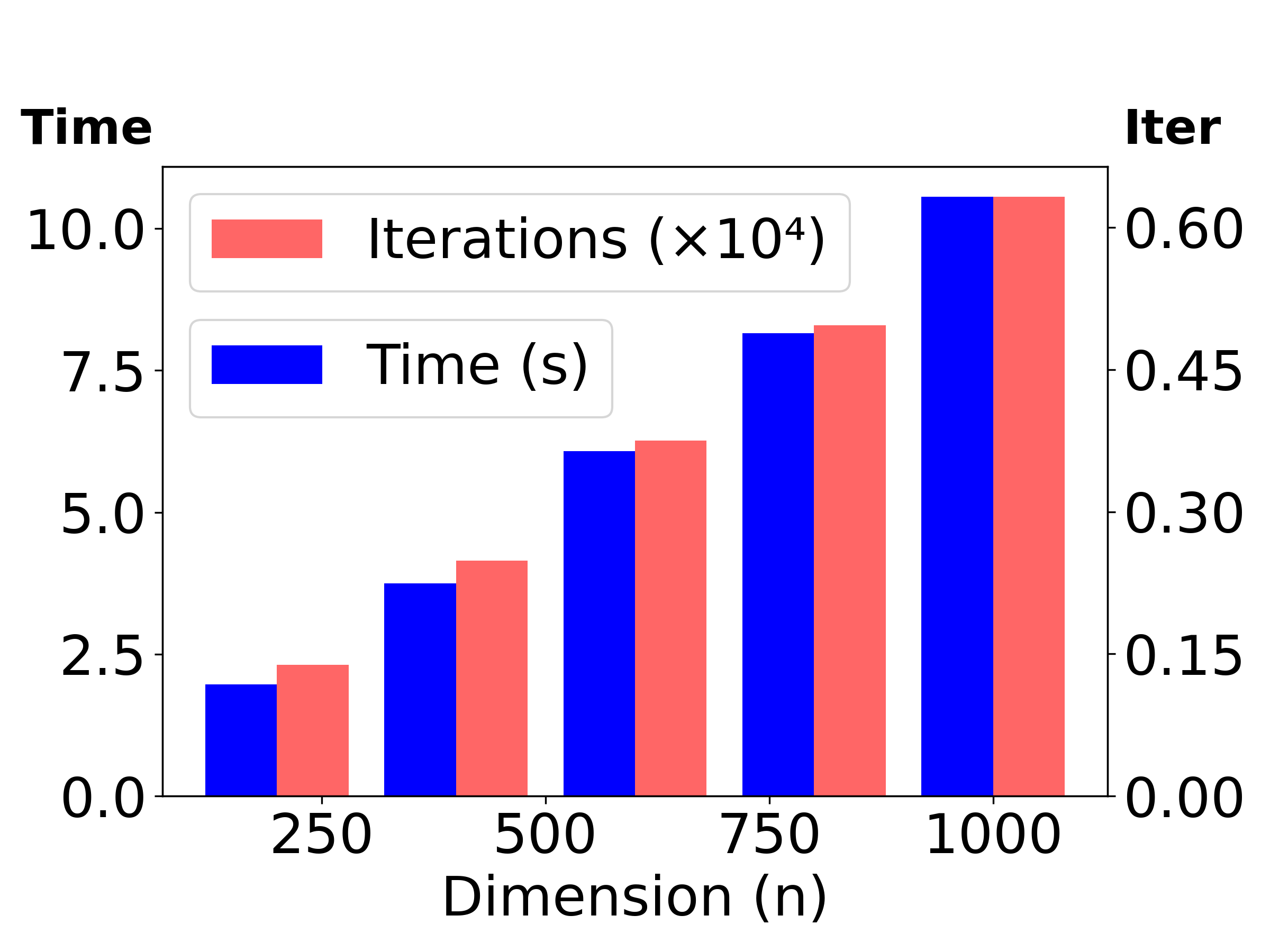}
	 	}
            \caption{\normalsize{\textbf{(a)}: Convergence curves of SiPBA, AdaProx-PD and AdaProx-SG on \eqref{constrainedtoy} with $n=100$;  \textbf{(b)}: Iterations and runtime required for SiPBA on \eqref{constrainedtoy} for varying problem dimensions $n$.}}
	 	\label{compare}
        \vspace{-1em}
	 \end{figure}
\begin{table}[htbp]
    \vspace{-1.5em}
    \centering
    \caption{Performance comparison of the SiPBA, AdaProx-PD, AdaProx-SG, Scholtes-C, and Scholtes-D with $n=100$. }
    \resizebox{0.8\textwidth}{!}{
    \begin{tabular}{cccccc}
    \hline
          & SiPBA  & AdaProx-PD &AdaProx-SG &  Scholtes-C& Scholtes-D \\
         \hline
         Min. $(\epsilon_{rel})$ & $1.22 \times 10^{-6}$ & $1.79\times10^{-7}$&$3.80\times10^{-6}$ & $1.12\times10^{-5}$&$9.60\times 10^{-5}$ \\
         Max. $(\epsilon_{rel})$&$1.45\times10^{-6}$&$1.53\times10^{-5}$&$1.02\times10^{-4}$&0.10&1.06\\
         Valid Runs &10/10 &10/10&9/10&1/10&1/10\\
         Ave Time  (s) & 1.03 & 87.44& 4.07 & 23.12 &23.81 \\
         \hline
    \end{tabular}}
    \label{compare alain}
    \vspace{-0.5em}
\end{table}     
  
We compare SiPBA against two other gradient-based methods—AdaProx-PD and AdaProx-SG \cite{guanadaprox}—as well as two MPCC-based approaches— Compact Scholtes (Scholtes-C) and Detailed Scholtes (Scholtes-D) relaxation method \cite{benchouk2025scholtes}. SiPBA, AdaProx-PD, and AdaProx-SG are run for 20,000 iterations, while  Scholtes-C and Scholtes-D are run for 10 outer iterations (as they converge within this range).  We report the minimum and maximum relative errors, the number of successful runs achieving the tolerance $\epsilon_{rel} < 10^{-4}$ (Valid Runs), and the average runtime to reach this tolerance for those valid runs (Ave. Time). Figure~\ref{compare}(a) shows the convergence curve of the gradient-based algorithms and Table~\ref{compare alain} summarizes the final performance metrics of all the methods. We further evaluate SiPBA’s robustness to hyperparameters (stepsizes $\alpha_0,\beta_0$ and update factors $p,q,s$) and its scalability by measuring runtime and iterations required to achieve the tolerance, $\epsilon_{rel} < 10^{-4}$, across varying hyperparameters and problem dimensions, with results shown in Table~\ref{Ablation} and Figure~\ref{compare} (b). All the results indicate the consistent performance and computational efficiency of SiPBA.

\begin{table}[htp]
    \vspace{-1em}
    \centering
    \captionof{table}{Ablation analysis for SiPBA on \eqref{constrainedtoy} with $n=100$.}
    \resizebox{!}{1.2cm}{
    \begin{tabular}{ccccccc}
      \toprule
      $\alpha_0$ & $\beta_0$ & $p$ & $q$ &$s$  & Time (s) \\
      \midrule
      0.1 & 0.001  & 0.001  & 0.001 &0.1 &1.0$\pm0.1$  \\
      \textbf{1}   & 0.001  & 0.001  & 0.001 &0.1&0.1$\pm0.0$ \\
      \textbf{0.01}  & 0.001  & 0.001 & 0.001 &0.1 & 14.5$\pm1.5$ \\
      0.1  & \textbf{0.01}  & 0.001 & 0.001 &0.1& 0.5$\pm0.1$ \\
      0.1 & \textbf{0.0001}   & 0.001 & 0.001 &0.1 & 16.4$\pm3.1$\\
      0.1 & 0.001  & \textbf{0.01} & 0.001 &0.1 &1.4$\pm0.3$\\
      
      \bottomrule
    \end{tabular}}
    \hspace{10pt}
    \resizebox{!}{1.2cm}{
    \begin{tabular}{ccccccc}
      \toprule
      $\alpha_0$ & $\beta_0$ & $p$ & $q$ &$s$  & Time (s) \\
      \bottomrule
      0.1 & 0.001  & \textbf{0.0001}  & 0.001 &0.1 &1.0$\pm0.1$\\
      0.1 & 0.001  & 0.001  & \textbf{0.01} &0.1&1.2$\pm0.1$ \\
      0.1 & 0.001  & 0.001  & \textbf{0.0001} &0.1&1.1$\pm0.2$  \\
      0.1 & 0.001  & 0.001  & 0.001  &\textbf{0.3}&5.0$\pm0.2$ \\
      0.1 & 0.001  & 0.001 & 0.001 &\textbf{0.016} &0.8$\pm0.1$  \\
      0.1 & 0.001  & \textbf{0.01} & \textbf{0.01} &\textbf{0.16} &1.9$\pm0.3$  \\
      \bottomrule
    \end{tabular}}
    \label{Ablation}
    \vspace{-1em}
\end{table}

    \subsection{Spam classification}
    Spam classification is challenging due to adversarial dynamics and poor cross-domain generalization. We consider the PBO model for spam classification tasks, as proposed by \cite{bruckner2011stackelberg}:
	\begin{equation}\label{spammodel}
    \min_{w\in\IR^n}\max_{\hat{\x}}\;l(w,\hat{\x},\y)+\lambda_1 \text{Reg}(w)\quad \text{s.t.}\;\hat{\x}\in\;\underset{\x^\prime\in\mathcal{X}}{\arg\min}\; l^\prime(w,\x^\prime)+\lambda_2 \|\varphi(\x^\prime)-\varphi(\x)\|^2,
	\end{equation}
    where $w$ denotes the classifier parameters, $(\x,\y)$ represents vectorized training data, $l$ (resp. $l'$) corresponds to the classifier (resp. adversarial generator) loss, $\text{Reg}(\cdot)$ denotes the regularization term, and $\varphi(\cdot)$ characterizes the feature of data. 
	
    We conduct a two-part empirical comparison.  
    First, we compare the PBO model \eqref{spammodel} trained using SiPBA (with $\varphi(x)$ as the top $k$ principal components) to the same model trained using the SQP method with $\varphi(x) = x$, as proposed in \cite{bruckner2011stackelberg}.  
    Second, we compare the SiPBA-trained PBO model against a standard single-level model, $\min_{w} l(w,\x,\y) + \lambda_1 \text{Reg}(w)$, trained using the scikit-learn library \cite{pedregosa2011scikit}.
     We use either hinge loss or cross-entropy for both $l$ and $l'$, and refer to the resulting methods as SiPBA-Hinge/CE, SQP-Hinge/CE, and Single-Hinge/CE.
    
     Experiments are conducted using four standard spam datasets: TREC2006 \cite{ounis2006overview}, TREC2007 \cite{macdonald2007overview}, EnronSpam \cite{metsis2006spam}, and LingSpam \cite{androutsopoulos2000evaluation}. 
         The average results over ten independent runs are summarized in Table~\ref{mutual_training_results}, which indicate that the PBO model (either trained with SQP or SiPBA) exhibits superior cross-domain performance compared to the single-level models. Moreover, the SiPBA-trained models achieve the best overall accuracy and F1 score.

\begin{table}[htbp]
    \centering
    \vspace{-1.5em}
    \caption{Accuracy (Acc) and F1 score (F1) on four spam corpora, training on TREC06, TREC07, EnronSpam or LingSpam. }
    \resizebox{0.8\textwidth}{!}{
        \begin{tabular}{ccccccc}
            \toprule
            \multirow{2}{*}{\textbf{Train Set}} & \multirow{2}{*}{\textbf{Model}} & \multicolumn{4}{c}{\textbf{Test Set(Acc/F1) }} & \multirow{2}{*}{\textbf{Ave}(Acc/F1)} \\
            \cmidrule(lr){3-6}
            && TREC06 & TREC07 & EnronSpam & LingSpam \\
            \midrule
            \multirow{6}{*}{TREC06}           
            & SiPBA-Hinge & 96.4/94.7 & 87.3/81.0 & 70.6/70.2 & 87.6/92.7 & \textbf{85.5/84.7}\\
            & SiPBA-CE & 94.5/92.5 & 79.5/73.0 & 70.9/71.8 & 87.6/92.8 & \textbf{83.1/82.5} \\
            & SQP-Hinge & 93.1/90.0 & 89.2/83.2 & 69.0/66.7 & 89.0/93.4 & 85.1/83.3 \\
            & SQP-CE & 93.6/91.3 & 78.9/72.4 & 70.7/71.4 & 87.2/92.6 & 82.6/81.9\\
            & Single-Hinge & 95.4/93.1 & 89.3/82.8 & 63.9/46.5 & 75.5/82.5 & 81.0/76.2 \\
            & Single-CE & 93.8/90.4 & 88.5/79.6 & 56.9/24.1 & 55.1/62.6 & 73.6/64.2 \\
            \midrule
            \multirow{6}{*}{TREC07}  
            & SiPBA-Hinge & 68.9/16.8 & 93.7/89.7 & 57.0/33.7 & 50.5/57.6 & \textbf{67.5/49.5} \\
            & SiPBA-CE & 71.7/56.9 & 98.1/97.2 & 68.3/68.8 & 64.6/75.5 & \textbf{75.7/74.6} \\
            & SQP-Hinge & 68.9/17.2 & 95.3/92.5 & 55.0/21.0 & 29.9/28.1 & 62.3/39.7 \\
            & SQP-CE & 71.3/56.9 & 97.7/96.6 & 68.4/69.7 & 70.1/80.5 & 76.9/75.9 \\
            & Single-Hinge & 65.4/1.9 & 97.7/96.4 & 50.9/0.2 & 16.6/0.3 & 57.7/24.7 \\
            & Single-CE & 66.4/3.4 & 95.7/93.0 & 51.0/0.8 & 17.3/1.8 & 57.6/24.8 \\
            \midrule
            \multirow{6}{*}{EnronSpam} 
            
            & SiPBA-Hinge & 75.8/61.8 & 72.1/28.0 & 95.9/95.8 & 59.6/67.4 & \textbf{75.9/63.3} \\
            & SiPBA-CE & 76.3/62.8 & 74.0/34.4 & 95.2/95.0 & 64.0/72.0 & \textbf{77.4/66.1} \\
            & SQP-Hinge & 77.5/61.7 & 70.5/22.8 & 96.1/96.0 & 52.3/59.3 & 74.1/60.0 \\
            & SQP-CE & 76.0/62.6 & 73.4/32.9 & 94.9/94.8 & 63.0/71.0 & 76.8/65.3 \\
            & Single-Hinge & 76.8/56.0 & 69.3/15.0 & 95.8/95.6 & 47.2/52.3 & 72.3/54.7 \\
            & Single-CE & 76.4/55.4 & 70.0/19.2 & 95.6/95.3 & 43.1/46.9 & 71.3/54.2 \\
            \midrule
            \multirow{6}{*}{LingSpam}
            & SiPBA-Hinge & 63.4/59.1 & 66.2/51.2 & 71.1/65.4 & 99.4/99.6 & \textbf{75.0/68.8} \\
            & SiPBA-CE & 71.8/48.5 & 69.0/27.6 & 59.1/34.3 & 91.8/94.8 & \textbf{72.9/51.3} \\
            
            & SQP-Hinge & 42.5/53.8 & 45.3/52.0 & 72.5/65.8 & 98.2/99.0 & 64.6/67.7 \\
            & SQP-CE & 72.0/49.5 & 68.9/26.2 & 58.9/33.9 & 91.9/94.8 & 72.9/51.1 \\
            & Single-Hinge & 37.2/51.9 & 38.6/50.6 & 56.7/69.0 & 95.7/97.5 & 57.1/67.3 \\
            & Single-CE & 34.5/51.0 & 34.0/50.1 & 51.3/66.8 & 91.4/95.1 & 52.8/65.8 \\
            \bottomrule
        \end{tabular}\label{mutual_training_results}
    }
    \vspace{-1em}
\end{table}

\subsection{Hyper-representation}
Hyper-representation\cite{grazzi2020iteration, franceschi2018bilevel} aim to learn an effective representation of the input data for lower-level classifiers, where PBO model was used to handle the potential multiplicity of optimal solutions in the lower-level problem and robust learn the representation \cite{guanadaprox}. In this experiment, we further explore the potential of the PBO model and compare it with optimistic models.
\subsubsection{Linear hyper-representation on synthetic data}
We begin with a synthetic linear hyper-representation task, which is formulated as:
\begin{align}
\label{HR}
&\min_{H\in\mathbb{R}^{n \times p}}\max_{w} \frac{1}{m_1}\|\mathbf{X}_{val}^T H w - \mathbf{y}_{val}\|^2, \quad \text{s.t.} \; w \in \underset{w' \in \mathbb{R}^{p}}{\arg\min} \frac{1}{m_2}\|\mathbf{X}_{train}^T H w' - \mathbf{y}_{train}\|^2,
\end{align}
where $\mathbf{X}_{val} \in \mathbb{R}^{n \times m_1}$ and $\mathbf{X}_{train} \in \mathbb{R}^{n \times m_2}$ are the validation and training feature matrices, and $\mathbf{y}_{val}\in \mathbb{R}^{m_1}, \mathbf{y}_{train} \in \mathbb{R}^{m_2}$ are the corresponding response vectors. The synthetic data is generated as in \cite{grazzi2020iteration}, with feature matrices $\mathbf{X}_{val}, \mathbf{X}_{train}, \mathbf{X}_{test}$ and ground-truth matrices $H_{real}$ and vectors $w_{real}$ sampled randomly. The response vectors are formed using the linear model $\mathbf{y}_{(\cdot)} = \mathbf{X}_{(\cdot)}^\top H_{real} w_{real}$, with Gaussian noise $\epsilon \sim \mathcal{N}(0, a^2)$ added to both $\mathbf{X}_{(\cdot)}$ and $\mathbf{y}_{(\cdot)}$ for train and valid data to simulate noise.

To evaluate solver efficiency and formulation effectiveness, we conduct a two-part comparison. First, we compare the SiPBA algorithm with PBO algorithms AdaProx-PD and AdaProx-SG. Second, we assess the impact of the bilevel formulation by comparing the pessimistic model \eqref{HR} (solved by SiPBA) with its optimistic variant (solved by AID-FP , AID-CG \cite{grazzi2020iteration} and PZOBO \cite{sow2022convergence}), which replaces $\underset{w}{\max}$ with $\underset{w}{\min}$ in the upper level. To assess the robustness of each method under varying levels of noise, we conduct experiments with moderate ($a = 0.1$) and severe ($a = 1$) perturbations. The performance is measured by test loss, averaged over 10 random seeds.  Results in Figure~\ref{hyperrepresentation} demonstrate the stability and efficiency of SiPBA.

\begin{figure}[htbp]
    \centering
    \vspace{-1em}
    \includegraphics[width=0.9\linewidth]{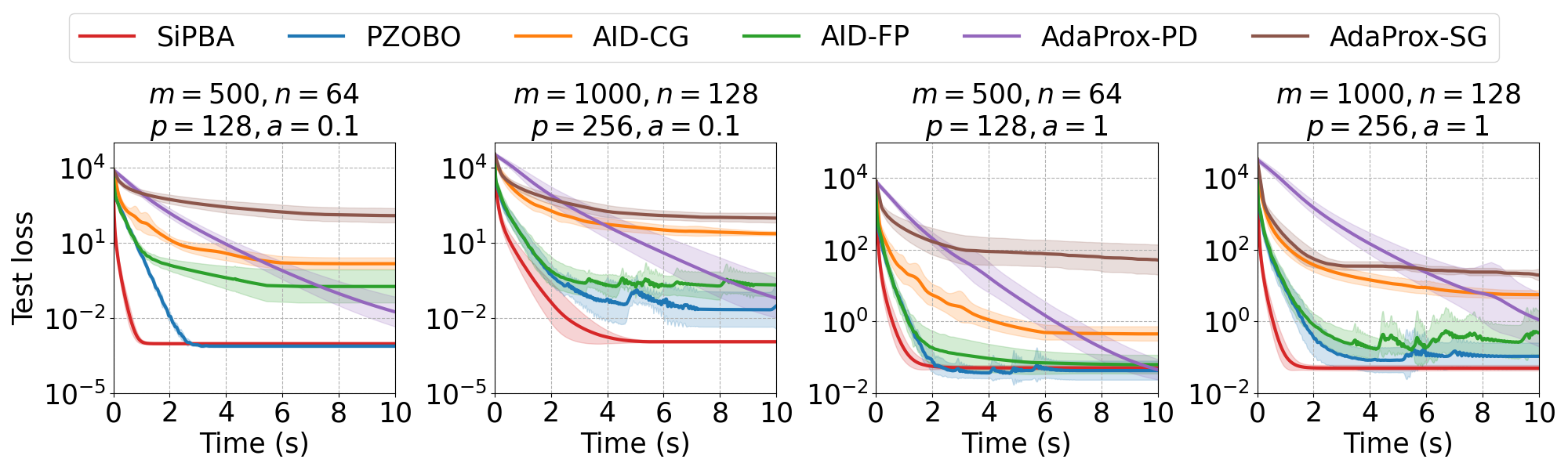}
    \caption{Test loss v.s. time in Hyper-representation with varying dimensions and noise levels.}
    \label{hyperrepresentation}
    \vspace{-1em}
\end{figure}

\subsubsection{Deep hyper-representation on MNIST and FashionMNIST}
To further assess the practical effectiveness of the pessimistic model, we  conduct a more complicated deep hyper-representation experiment on real-world classification tasks. The problem is formulated as:
\begin{align}\label{deephr}
&\min_{\theta\in\Theta}\max_{w}\frac{1}{m_1}\|f(\mathbf{X}_{val},\theta)w-\mathbf{y}_{val}\|^2,\;\text{s.t.}\; w\in\underset{w^\prime\in\mathcal{W}}{\arg\min} \frac{1}{m_2}\|f(\mathbf{X}_{train},\theta)w^\prime-\mathbf{y}_{train}\|^2,
\end{align}
where $f(\cdot, \theta)$ represents a neural network parameterized by $\theta$, and $w$ corresponds to a linear layer.

We adopt the LeNet-5 architecture \cite{sow2022convergence} as the feature extractor $f(\cdot, \theta)$ and evaluate performance on the MNIST and FashionMNIST datasets. Each dataset is randomly split into 50,000 training samples, 10,000 validation samples, and 10,000 test samples, with performance evaluated by test accuracy.
 We compare the pessimistic formulation \eqref{deephr}, trained with SiPBA, to its optimistic variant (replacing $\underset{w}{\max}$ with $\underset{w}{\min}$ in the upper level) trained with AID-FP, AID-CG \cite{grazzi2020iteration}, and PZOBO \cite{sow2022convergence}.
 Mean results over ten runs are shown in Figure~\ref{MNIST}, which shows that SiPBA achieves the highest test accuracy.
\begin{figure}[htbp]
    \vspace{-0.8em}
    \centering
    \includegraphics[width=0.6\textwidth]{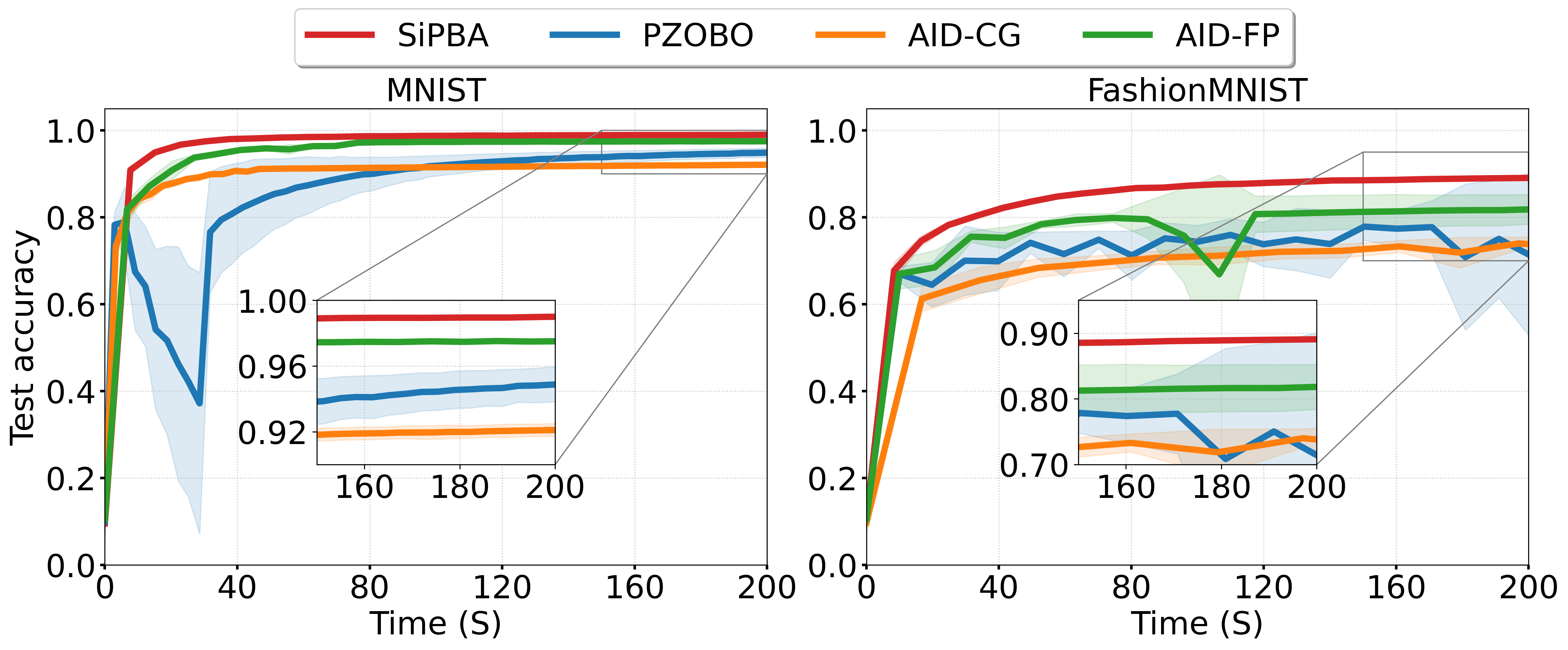}
    \caption{Hyper-representation on MNIST and FashionMNIST.}
    \vspace{-1em}
    \label{MNIST}
\end{figure}

\section{Conclusions and future work}\label{conclusion}

This paper introduces a novel smooth approximation for PBO, which underpins the development of SiPBA, an efficient new gradient-based PBO algorithm. SiPBA avoids computationally expensive second-order derivatives and the need for iterative inner-loop procedures to solve subproblems.

The current study is confined to deterministic PBO problems. However, a significant number of practical applications feature PBO problems within stochastic settings. Extending the SiPBA methodology to effectively address these stochastic PBO problems presents a crucial and promising direction for future research. We hope this research stimulates further algorithmic development for stochastic PBO.

\clearpage
\section*{Acknowledgements}
Authors listed in alphabetical order. This work was supported by National Key R\&D Program of China (2023YFA1011400), National Natural Science Foundation of China (12222106, 12326605), Guangdong Basic and Applied Basic Research Foundation (No. 2022B1515020082), the Longhua District Science and Innovation Commission Project Grants of Shenzhen (No. 20250113G43468522) and Natural Science Foundation of Shenzhen (No. 20250530150024003).
\bibliographystyle{abbrv}
\bibliography{ref}

\clearpage
\appendix

\section{Numerical experiment}\label{experiment}
    In this section, we provide the specific description of experiments in Section \ref{numerical experiment}. All experiments were conducted on CPUs except for the spam classification task, which utilized an NVIDIA H100 GPU. The primary compute node features dual Intel Xeon Gold 5218R processors operating at 2.1GHz base frequency (4.0GHz turbo boost), featuring 40 physical cores (80 logical threads) with a three-tier cache architecture: 1.3MB L1, 40MB L2, and 55MB L3 shared cache. The NUMA-based memory architecture partitions resources across two distinct domains, with hardware support for AVX-512 vector instructions and VT-x virtualization. Security mitigations against Spectre/Meltdown vulnerabilities were implemented through combined microcode patches and kernel-level protections.
    \subsection{Synthetic example}\label{toyappendix}
    For the problem \ref{toyexample}, we can get the value function by simple calculation:
    \begin{align}\label{contour}
        \phi_{p}(\x)=\frac{1}{n}\|x-\mathbf{e}\|^2-\|\y^{*}(x)-\mathbf{e}\|^2,\; \text{where}\; \y^{*}(x):=\begin{cases}
        \frac{\|x\|\mathbf{e}}{n}, \|x\|>\frac{\sqrt{n}}{2},\\
        \frac{\mathbf{e}}{2\sqrt{n}}, \|x\|\le\frac{\sqrt{n}}{2},
    \end{cases}
    \end{align}
    which implies that \((\x^*,\y^*)=\Bigl(\tfrac{\mathbf{e}}2,\tfrac{\mathbf{e}}{2\sqrt{n}}\Bigr)\). 
    Except for the stability tests of the initial step sizes reported in Table~\ref{Ablation}, we fix the hyper-parameters as
    \begin{align}\label{initial}
        p = 0.001,\quad
    q = 0.001,\quad s=0.1, \quad
    \alpha_0 = 0.1,\quad \beta_0=0.001\quad
    \rho_0=10,\quad\sigma_0 = 0.01.
    \end{align}
    
    For the implementation of AdaProx-PD, we first fix $\xi=0.001$, $\sigma=0.001$, $\gamma_t=t$, $\theta_t=\gamma_{t+1}/\gamma_t$, $2/(L_g+2\alpha)=1/\tau_t$ and set $\tau_t=t\tau_0$, $\eta_t=\eta_0/t$, $K=100$, and $N=\min\{log(1/\epsilon),200\}$, $T=\min\{1/\sqrt{\epsilon},200\}$. Then we perform a grid search for 
    \begin{align*}
        &1/\eta_0,1/\tau_0,\xi\in\{0.1,0.01,0.001,0.0001\},\quad \sigma,\beta\in\{0.1,0.01,0.001\}.
    \end{align*}
    However, none of these yielded satisfactory convergence. We thus fixed parameters across iterations, set $K=100$, $T=200$, $N=10$, $\theta=1$, $2/(L_g+2\alpha)=1/\tau\; $, and conducted a grid search to find a best parameter to get lowest loss, where the grid is set as follows:
    \begin{align*}
        &1/\eta,1/\tau,\xi\in\{0.1,0.01,0.001,0.0001\}, \quad\sigma,\beta\in\{0.1,0.01,0.001\}.
    \end{align*}
    As a result, we have $1/\tau=0.001$, $1/\eta=0.001$, $\sigma=0.001$, $\xi=0.1$ and $\beta=0.001$ for AdaProx-PD.

    For the implementation of AdaProx-SG, we fix $\gamma_t=t$, $\theta_t=\gamma_{t+1}/\gamma_t$, $2/(L_g+2\alpha)=1/\gamma_t$ and  $K=100$, and $N=\min\{log(1/\epsilon),200\}$, $T=\min\{1/\epsilon,200\}$. Then we perform a grid search for 
    \begin{align*}
        &1/\gamma_0,\xi\in\{0.1,0.01,0.001,0.0001\},\quad \sigma,\beta\in\{0.1,0.01,0.001\}.
    \end{align*}
    As a result, we have  $1/\gamma_0=0.1$, $\sigma=0.001$, $\xi=0.1$ and $\beta=0.1$ for AdaProx-SG.
    
    For the implementation of the Compact Scholtes and Detailed Scholtes relaxation method in \cite{benchouk2025scholtes}, we utilize the fsolve solver from the SciPy library \cite{virtanen2020scipy}. In each outer iteration, the value of \( t_{k+1} \) is updated as \( t_{k+1} = 0.1 t_k \) with \( t_0 = 1 \) and fix $\epsilon=t_k$. 

    \subsection{Spam classification}\label{spam detail}
    Spam classification remains a critical challenge in machine learning due to adversarial dynamics: 
    spammers adapt their strategies in response to deployed classifiers, while models trained on specific datasets often exhibit poor cross-domain generalization. In this paper, we extend the pessimistic bilevel model for Spam classification in \cite{bruckner2011stackelberg}:
	\begin{align}
    &\min_{w\in\IR^n}\max_{\hat{\x}}\;l(w,\hat{\x},\y)+\lambda_1\text{Reg}(w)\quad &\text{s.t.}\;\hat{\x}\in\;\underset{\x^\prime\in\mathcal{X}}{\arg\min}\; l^\prime(w,\x^\prime)+\lambda_2\|\varphi(\x^\prime)-\varphi(\x)\|^2,
	\end{align}
    where $w$ denotes the classifier parameters, $(\x,\y)$ represents vectorized training data, $l$ (resp. $l'$) corresponds to the classifier (resp. adversarial generator) loss, $\text{Reg}(\cdot)$ denotes the regularization term, and $\phi(\cdot)$ characterizes the feature of data. This framework explicitly models spammer adaptations through adversarial samples $\hat{\x}$, enhancing classifier robustness against evolving threats.

    We evaluate our model on four benchmark datasets:
    \begin{itemize}
        \item \textbf{TREC06} (37,822 emails; 24,912 spam / 12,910 ham): \url{https://plg.uwaterloo.ca/cgi-bin/cgiwrap/gvcormac/foo06}
        \item \textbf{TREC07} (75,419 emails; 50,199 spam / 25,220 ham): \url{https://plg.uwaterloo.ca/cgi-bin/cgiwrap/gvcormac/foo07}
        \item \textbf{EnronSpam} (33,715 emails; 16,545 spam / 17,170 ham): \url{https://www.cs.cmu.edu/~enron/}
        \item \textbf{LingSpam} (2,893 emails; 481 spam / 2,412 ham): \url{https://www.aueb.gr/users/ion/data/lingspam_public.tar.gz}
    \end{itemize}
        
    The text was vectorized using a TfidfVectorizer that removed English stop words, retained only
    terms appearing in at least five documents, and limited the feature space to the top 9000 most informative terms.
    We represent the resulting vectors as the variable $x$ and train the model in the vectorized space.
    To simulate the real world situation, we assume that email authors always aim to have their messages classified as ham; accordingly, we define \(l'\) as the loss incurred when an email is classified as spam, using the same formulation (cross‐entropy or hinge) as \(l\).  The specific definition of $l$ and $l^{\prime}$ used in our experiment is given by
    \begin{align*}
    &\text{PBO-Hinge}:\begin{cases}
        l(w,x,y) &= \frac{1}{n}\sum_{i=1}^{n}\max\{0, 1 - w^{\top} x_i y_i\},\\
        l^{\prime}(w,x) &= \frac{1}{n}\sum_{i=1}^{n}\max\{0, 1 - w^{\top} x_i\},
    \end{cases}\\
    &\text{PBO-CE:}\;\;\;\;\;\begin{cases}
        l(w,x,y) &= \text{CrossEntropy}(w^{\top}x,\frac{y+1}{2}), \\
        l^{\prime}(w,x) &=\text{CrossEntropy}(w^{\top}x,1).
    \end{cases}
    \end{align*}
    where $x_i$ denotes the input data, $y_{i}\in\{-1,1\}$ denotes the label (-1 for spam and 1 for non-spam) and CrossEntropy is defined by $\text{CrossEntropy}(w^{\top}x, y) = - \left( y \log(\sigma(w^{\top}x)) + (1 - y) \log(1 - \sigma(w^{\top}x)) \right)$ and $\sigma(z)=1/(1+e^{-z})$ is the Sigmoid function. The function $\phi$ is defined as  
	\begin{align*}  
		\varphi(x) := xP_{k},  
	\end{align*}  
    where the matrix $P_k$ consists of the top $k$ principal components obtained from the principal component decomposition of the sample matrix. This choice is motivated by the assumption that meaningful information in emails is primarily captured by the principal components, and modifications made by spammers generally do not alter this core content. Therefore, we penalize changes along the principal components to enforce robustness against adversarial modifications. In this experiment, we always set $k=100, \lambda_1=0.01,\lambda_2=0.1$ for SiPBA. 

    For the implementation of SiPBA, we fix $\rho_0=10$, $\sigma_0=10^{-6}$, $p=0.01$, $q=0.01$ and $s=0.16$, and we set the hyperparameter as follows:
\begin{align*}
\text{TREC06:} &\quad 
\begin{cases} 
\alpha_0 = 0.03,\; \beta_0 = 10^{-6}, & \text{for PBO-Hinge}, \\
\alpha_0 = 0.1,\;\; \beta_0 = 10^{-4}, & \text{for PBO-CE},
\end{cases} \\
\text{TREC07:} &\quad 
\begin{cases} 
\alpha_0 = 0.1, \;\;\;\beta_0 = 10^{-2}& \text{for PBO-Hinge}, \\
\alpha_0 = 0.05, \;\beta_0 = 10^{-4},& \text{for PBO-CE},
\end{cases} \\
\text{EnronSpam:} &\quad 
\begin{cases} 
\alpha_0 = 0.02,\; \beta_0 = 10^{-7}, & \text{for PBO-Hinge}, \\
\alpha_0 = 0.01,\; \beta_0 = 10^{-7}, & \text{for PBO-CE},
\end{cases} \\
\text{LingSpam:} &\quad 
\begin{cases} 
\alpha_0 = 0.02,\; \beta_0 = 5 \times 10^{-5} & \text{for PBO-Hinge}, \\
\alpha_0 = 0.05, \;\beta_0 = 10^{-7},& \text{for PBO-CE}.
\end{cases}
\end{align*}

    For the implementation of SQP-Hinge and SQP-CE, we set $\varphi(x) := \x$ (to ensure the lower level can be uniquely solved) and $\lambda_1=0.01,\lambda_2=0.001$ and solve the problem using the trust-constr method from the scipy.optimize solver \cite{virtanen2020scipy}.

     For the implementation of  Single-Hinge and Single-CE, we use SVC and Logistic Regression from scikit-learn \cite{pedregosa2011scikit} with default setting and $max\_iter=10000$. 
    \subsection{Hyper-representation}\label{hyperappendix}
    In the linear hyper-representation on synthetic data, we follow the data generation procedure of \cite{sow2022convergence}. Specifically, we generate the ground‐truth matrix 
    $H_{real}\in\mathbb{R}^{p\times d}$, 
    the vector 
    $w_{real}\in\mathbb{R}^{d}$, 
    and the inputs 
    $\mathbf{X}_{train},\;\mathbf{X}_{val},\;\mathbf{X}_{test}$
    by sampling each entry independently from the standard normal distribution \(\mathcal{N}(0,1)\). We then generate the train, valid and test data by
    $
        \mathbf{y}_{(\cdot)}= \mathbf{X}_{(\cdot)}^{\!\top}\,H\,w.$
    Finally,  we add $\epsilon\sim\mathcal{N}(0,a^2)$ with $a=0.1$ and $a=1$ to $\mathbf{X}_{val},\mathbf{X}_{train}$ and $\mathbf{y}_{val},\mathbf{y}_{train}$  to simulate the noise condition.
    The parameters of the algorithms are initialized as
    \begin{itemize}
        \item For SiPBA, we set
        $p = 0.01,
    q = 0.01,s=0.16,
    \rho_0=10,\sigma_0 = 10^{-4}.$
    And the stepsize is set as $\alpha_0=5\times10^{-4}, \beta_0=5\times10^{-4}$ for $m=500, a=0.1$ and $\alpha_0=10^{-4}, \beta_0=10^{-4}$ for the remaining senarios.
    \item For AdaProx-PD, we set $K=100$, $T=20$, $N=10$, $\theta=1$, $2/(L_g+2\alpha)=1/\tau\; $ and $\sigma=0.1$, $\xi=0.001$ and $\beta=0.001$. And the stepsize is setted as  $\tau=\eta=10^4,2\times10^4,2\times10^4,5\times10^4$ for the four senarios in Figure \ref{hyperrepresentation}.
    \item For AdaProx-SG, we set $K=100$, $T=\min\{20,1/\epsilon\}$, $N=\min\{10,log(1/\epsilon)\}$,  $2/(L_g+2\alpha)=1/\gamma\; $,  $\sigma=0.001$, $\xi=0.001$, $\beta=0.001$ and $\gamma_0=10^4$.
    \item For AID‐FP, AID‐CG and PZOBO, we keep  the setting as presented in \url{https://github.com/sowmaster/esjacobians/tree/master}, except that the inner learning rate is set as $0.0001$ as we found it's more stable for these algorithms. 
    \end{itemize}

   For the classification tasks on MNIST and FashionMNIST, we split the dataset into 50,000 training samples, 10,000 validation samples, and 10,000 test samples. Both the upper and lower levels are trained using the LeNet architecture, following the setting in~\cite{sow2022convergence}. During each training iteration, we randomly select 256 samples to compute the loss and gradients.
    The parameters of the algorithms are initialized as follows:
    \begin{itemize}
        \item For SiPBA, we set $p = 0.01,q = 0.01,s = 0.16,\rho_0 = 10,\alpha_0 = 0.01,\beta_0 = 0.01$ and $\sigma_0 = 0.1.$
        \item For PZOBO, we adopt the implementations from \url{https://github.com/sowmaster/esjacobians/tree/master} and set number of inner iterations  $T = 30$ for training on FashionMNIST to ensure proper convergence.
        \item For AID-CG, we set the learning rate to $lr = 0.001(0.0005)$ and the number of inner iterations to $T = 10\ (50)$ for training on MNIST (FashionMNIST).
        \item For AID-FP, we set $lr = 0.001$ and $T = 20(30)$ for training on MNIST (FashionMNIST) .
    \end{itemize}

    \subsection{Parameter Selection}\label{param_select}
    The implementation of SiPBA includes seven parameters, namely $\alpha_0,\beta_0,\sigma_0,\rho_0,p,q,s$.
    The parameters $s$, $p$, and $q$ collectively govern the fundamental trade-off between value function approximation accuracy and iterative step size selection. The parameter $p$ controls the growth rate of the penalty coefficient $\rho_k = \rho_0 k^p$, while $q$ determines the decay rate of the regularization coefficient $\sigma_k = \sigma_0 k^{-q}$. Larger values of $p$ and $q$ yield faster convergence of the approximate value function $\phi_{\rho_k,\sigma_k}(x)$ to the true objective. The parameter $s$ regulates the step size decay rate $\alpha_k = \alpha_0 k^{-s}$ for the primal iterates $x^k$. 

    The theoretical requirement $s \geq 8p + 8q$ reveals an essential trade-off: choosing larger values for $p$ and $q$ accelerates the value function approximation but necessitates a larger $s$, resulting in smaller step sizes $\alpha_k$ that slows down the convergence rate of $x^k$. Conversely, smaller $p$ and $q$ permit more aggressive step sizes through reduced $s$, but at the cost of slower convergence of the approximate objective $\phi_{\rho_k,\sigma_k}(x)$ to the true value function, potentially degrading overall algorithmic performance. 
    
    We provide practical guidelines for parameter selection here. Specifically, the update rules are given by:
    \begin{align*}
    \alpha_{k} = \alpha_0 k^{-8p - 8q}, \quad
    \beta_k = \beta_0 k^{-2p - q}, \quad
    \rho_k = \rho_0 k^{-p}, \quad
    \sigma_k = \sigma_0 k^{-q},
    \end{align*}
    with default settings $p = q = 0.01$ and $\rho_0 = 10$. Therefore, tuning is only required for the three scalar parameters: $\alpha_0$, $\beta_0$, and $\sigma_0$.

\setcounter{theorem}{0} 
\section{Proofs for Section 2}\label{proofofreformulation}

This section provides the proofs for the theoretical results established in Section 2.

\subsection{Equivalent minimax reformulation of $\phi(x)$}\label{equivalentphi}
\begin{lemma}
    Consider the function
    $$\phi(x) := \max_{y}\; \left\{ F(x, y) \quad \text{s.t.} \; y \in \mathcal{S}(x) \right\},$$ where $$ \mathcal{S}(x): =\mathrm{argmin}_{y' \in Y} f(x, y^\prime).$$ 
    Then, for any $x \in X$, we have the following equivalent minimax reformulation:
    \[
    \phi(x) =\min_{ z\in {Y} } \max_{y\in {Y}}\; \left\{ F(x, y) \quad \text{s.t.} \; f(x,y)\le f(x,z) \right\}.
    \]
\end{lemma}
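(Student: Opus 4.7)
The plan is to establish equality by a standard two-sided bound, relying only on the characterization of $\mathcal{S}(x)$ as the minimizer set of $f(x,\cdot)$ over $Y$, together with the nonemptiness of $\mathcal{S}(x)$ guaranteed by Assumption \ref{assum2}. Denote the right-hand side by $\psi(x)$ and define, for each $z \in Y$, the level-set feasible region
\[
L(x,z) := \{ y \in Y : f(x,y) \le f(x,z) \}.
\]

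For the inequality $\psi(x) \ge \phi(x)$, I would observe that for every $z \in Y$ one has $f(x,z) \ge \min_{y' \in Y} f(x,y')$, which is precisely the common value attained on $\mathcal{S}(x)$. Hence $\mathcal{S}(x) \subseteq L(x,z)$ for every $z \in Y$, so
\[
\max_{y \in L(x,z)} F(x,y) \;\ge\; \max_{y \in \mathcal{S}(x)} F(x,y) \;=\; \phi(x).
\]
Taking the minimum over $z \in Y$ preserves the inequality, yielding $\psi(x) \ge \phi(x)$.

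For the reverse inequality $\psi(x) \le \phi(x)$, I would use Assumption \ref{assum2} to pick any $\bar z \in \mathcal{S}(x)$. Since $\bar z$ is a minimizer, $f(x,\bar z) = \min_{y' \in Y} f(x,y')$, and therefore $L(x,\bar z)$ coincides exactly with $\mathcal{S}(x)$: any $y \in Y$ with $f(x,y) \le f(x,\bar z)$ must itself be a minimizer. Consequently
\[
\max_{y \in L(x,\bar z)} F(x,y) \;=\; \max_{y \in \mathcal{S}(x)} F(x,y) \;=\; \phi(x),
\]
and since the outer minimization over $z$ can only decrease this value, $\psi(x) \le \phi(x)$.

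There is no real obstacle here; the only point that requires care is verifying that $L(x,\bar z) = \mathcal{S}(x)$, which is immediate from the definition of $\mathcal{S}(x)$ as $\mathrm{argmin}_{y' \in Y} f(x,y')$. The nonemptiness of $\mathcal{S}(x)$ from Assumption \ref{assum2} is essential to ensure that such a $\bar z$ exists; otherwise the second inequality would be vacuous. Combining the two inequalities yields $\psi(x) = \phi(x)$, completing the proof.
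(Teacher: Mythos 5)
Your proposal is correct and follows essentially the same two-sided argument as the paper: the inclusion $\mathcal{S}(x)\subseteq L(x,z)$ for all $z$ gives one inequality, and choosing $z=\bar z\in\mathcal{S}(x)$ (so that $L(x,\bar z)=\mathcal{S}(x)$) gives the other. The paper's proof is identical in substance, merely phrased in terms of a specific maximizer $y^*\in\mathcal{S}(x)$ rather than the level sets $L(x,z)$.
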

\begin{proof}
Let $x \in X$ be an arbitrary point. The assumptions that $\mathcal{S}(x)$ is nonempty, and $F(x,y)$ is $\mu$-strongly concave with respect to $y$, and using the fact that $\mathcal{S}(x)$ is closed, we conclude that there exists some $y^*\in \mathcal{S}(x)$ such that $\phi(x) = F(x, y^*)$.

For any $z \in Y$, since $y^*\in \mathcal{S}(x)$, it follows that $$f(x, y^*) \le f(x,z).$$ 
Therefore, we have
\[
\phi(x) =F(x, y^*) \le \max_{y\in {Y}}\; \left\{ F(x, y) \quad \text{s.t.} \; f(x,y)\le f(x,z) \right\}. 
\]
Taking the minimum over all $z \in Y$, we obtain 
\[
\phi(x) = F(x, y^*) \le \min_{ z\in {Y} } \max_{y\in {Y}}\; \left\{ F(x, y) \quad \text{s.t.} \; f(x,y)\le f(x,z) \right\}. 
\]
Next, we establish the reverse inequality. Consider the specific choice $z = y^*$, since $y^* \in \mathcal{S}(x)$, we have
\[
\max_{y\in {Y}}\; \left\{ F(x, y) \quad \text{s.t.} \; f(x,y)\le f(x,y^*) \right\} = \max_{y \in Y}\; \left\{ F(x, y) \quad \text{s.t.} \; y \in \mathcal{S}(x) \right\} = \phi(x).
\]
Thus, we conclude that
\[
\min_{ z\in {Y} }\max_{y\in {Y}}\; \left\{ F(x, y) \quad \text{s.t.} \; f(x,y)\le f(x,z) \right\} \le \phi(x).
\]
This completes the proof.
\end{proof}

\subsection{Proof for Theorem \ref{differentiable}}\label{differentiableproof}

The proof strategy is analogous to that employed in \cite[Lemma A.1]{yaoconstrained}. We proceed by first analyzing an auxiliary function and then leveraging its properties to establish the differentiability of $\phi_{\rho,\sigma}(x)$.

Let us define an auxiliary function $h(x,z)$ as:	$$h(x,z) := \max_{y \in Y} \psi_{\rho,\sigma}(x,y,z) = - \min_{y \in Y} - \psi_{\rho,\sigma}(x,y,z). $$
	By assumption, $\psi_{\rho,\sigma}(x,y,z)$ is continuous differentiable on $X \times Y \times Y$, and $- \psi_{\rho,\sigma}(x,y,z)$ is $\mu$-strongly convex with respect to $y$ for any $(x,z) \in X \times Y$. 

The $\mu$-strongly convexity of $- \psi_{\rho,\sigma}(x,y,z)$ with respect to $y$ ensures the uniqueness of the minimizer of $\min_{y \in Y} - \psi_{\rho,\sigma}(x,y,z)$ (equivalently, the maximizer of $\max_{y \in Y} \psi_{\rho,\sigma}(x,y,z)$). Let us denote this unique maximizer as  
$\widehat{y}^*(x,z)$.
    Furthermore, it can be shown that $- \psi_{\rho,\sigma}(x, \cdot ,z)$ satisfies the inf-compactness condition as stated in \cite[Theorem 4.13]{bonnans2013perturbation} on any point $(\bar{x},\bar{z}) \in X \times Y $. 
	Specifically, for any $(\bar{x},\bar{z}) \in X \times Y $, there exists a constant $c \in \mathbb{R}$, a compact set $B \subset \mathbb{R}^m$, and a neighborhood $W$ of $(\bar{x},\bar{z})$ such that the level set $\{ y \in Y ~|~ - \psi_{\rho,\sigma}(x,y,z) \le c \}$ is nonempty and contained in $B$ for all $(x, z) \in W$.

Given that $\psi_{\rho,\sigma}(x,y,z)$ is continuously differentiable, $\widehat{y}^*(x,z)$ is unique, and the inf-compactness condition holds, we can apply \cite[Theorem 4.13, Remark 4.14]{bonnans2013perturbation}. This theorem implies that $h(x,z)$ is differentiable on $X \times Y $, and its gradient is given by:
	\begin{equation}\label{lem1_eq0}
		\nabla h(x,z) = 
		\left(  
		\nabla_x \psi_{\rho,\sigma}(x,\widehat{y}^*(x,z),z), \nabla_z \psi_{\rho,\sigma}(x,\widehat{y}^*(x,z),z)
		\right).
	\end{equation}
The strong concavity of $\psi_{\rho,\sigma}(x,y,z)$ in $y$ and the continuous differentiability of $\psi_{\rho,\sigma}$ imply that $\widehat{y}^*(x,z)$ is continuous on $X \times Y$. Since $\nabla_x\psi_{\rho,\sigma}$ and $\nabla_z\psi_{\rho,\sigma}$ are continuous by assumption, and  $\widehat{y}^*(x,z)$ is continuous, it follows from \eqref{lem1_eq0} that $\nabla h(x,z)$ is continuous on $X \times Y$. Thus, $h(x,z)$ is continuously differentiable on $X \times Y$.
	
The function $\phi_{\rho,\sigma}(x)$ can be expressed using $h(x,z)$ as:	\begin{equation}\label{lem1_eq1}
		\phi_{\rho,\sigma}(x) =  \min_{\z \in Y} h(x,z).
	\end{equation}
	We are given that $\psi_{\rho,\sigma}(x,y,z)$ is $\sigma$-strongly convex with respect to $z$ for any fixed $(x,y) \in X \times Y$. Since $h(x,z) := \max_{y \in Y} \psi_{\rho,\sigma}(x,y,z)$, and the maximum of a set of functions preserves strong convexity (see, e.g., \cite[Theorem 2.16]{beck2017first}), it can be shown that  $h(x,z)$ is $\sigma$-strongly convex with respect to $z$ for any fixed $x \in X$. The $\sigma$-strong convexity of $h(x,z)$ with respect to $z$ ensures the uniqueness of the minimizer $z_{\rho,\sigma}^*(x) = \arg \min_{z \in Y} h(x,z)$. This strong convexity, combined with the established continuous differentiability (and thus continuity) of $h(x,z)$, ensures that $h(x,z)$ satisfies the inf-compactness condition for $z$ for any $x \in X$. Furthermore, $z_{\rho,\sigma}^*(x)$ is continuous on $X$.

    We can again apply \cite[Theorem 4.13, Remark 4.14]{bonnans2013perturbation} to $\phi_{\rho,\sigma}(x) =  \min_{\z \in Y} h(x,z)$. The conditions are met: $h(x,z)$ is continuously differentiable (as shown above), and $z_{\rho,\sigma}^*(x)$ is unique.
    Therefore, $\phi_{\rho,\sigma}(x)$ is differentiable on $X$, and its gradient is given by:
	\[
	\nabla \phi_{\rho,\sigma}(x) =  \nabla_x h(x, z^*) =  \nabla_x \psi_{\rho,\sigma}(x,\widehat{y}^*(x,z^*),z^*),
	\]
	where $z^*$ denotes $z_{\rho,\sigma}^*(x)$. Since $\nabla_x \psi_{\rho,\sigma}(x,y,z)$ is continuous on $X \times Y \times Y$, $\widehat{y}^*(x,z)$ is continuous on $X \times Y$ and $z_{\rho,\sigma}^*(x)$ is continuous on $X$,  the composite function $\nabla \phi_{\rho,\sigma}(x)$ is continuous on $X$. Thus, $\phi_{\rho,\sigma}(x)$ is continuously differentiable.

    Additionally, because  $\psi_{\rho,\sigma}(x,y,z)$ is strongly concave in $y$ and strongly convex in $z$ for any $x \in X$, and because $\min_{z \in Y}\max_{y \in Y}\psi_{\rho,\sigma}(x,y,z) = \max_{y \in Y}\min_{z \in Y} \psi_{\rho,\sigma}(x,y,z)$ for any $x \in X$, it follows that $\widehat{y}^*(x,z^*) = y_{\rho,\sigma}^*(x)$ for any $x \in X$. Thus, the desired conclusion is obtained.

\subsection{Proof for Lemma \ref{lem21}}

Before presenting the proof for Lemma \ref{lem21}, we first establish some auxiliary results. Throughout this subsection, given sequences $\{\rho_k\}$ and $\{\sigma_k\}$, we will use the shorthand notations $\phi_k(x)$, $\psi_k(x,y,z)$, $y^*_k(x)$ and $z^*_k(x)$ to denote $\phi_{\rho_k,\sigma_k}(x)$, $\psi_{\rho_k,\sigma_k}(x,y,z)$, $y^*_{\rho_k,\sigma_k}(x)$ and $z^*_{\rho_k,\sigma_k}(x)$, respectively, for notational brevity.

First, we establish a uniform boundedness property for the saddle point components $y^*_{k}(x)$ and $z^*_{k}(x)$.

\begin{lemma}\label{saddlepointboundedness}
 Let $\{\rho_k\}$ and $\{\sigma_k\}$ be sequences such that $\rho_k \rightarrow \infty$ and $\sigma_k \rightarrow 0$ as $k \rightarrow \infty$. Let $B \subset X$ be a compact set. Then, there exists a constant $M > 0$ such that for all $k$ and all $x \in B$,
 \[
 \|y^*_{k}(x)\| \le M, \quad \text{and} \quad \|z^*_{k}(x)\| \le M,
 \]
 where $(y_{k}^*(x),z_{k}^*(x))$ is the unique saddle point of the minimax problem $\min_{z\in {Y}} \max_{ y \in {Y}}\psi_{\rho_k,\sigma_k}(x, y, z)$.
\end{lemma}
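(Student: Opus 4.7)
The plan is to bound $\|y^*_k(x)\|$ and $\|z^*_k(x)\|$ via the variational inequalities (VIs) characterizing the saddle point, choosing a test point inside the lower-level solution set so that the $\rho_k$-dependent terms can be favorably discarded. By Assumption \ref{assum2}, for each $x \in B$ there exists $\bar y = \bar y(x) \in \mathcal{S}(x) \cap D$ with $\|\bar y\| \le \sup_{v \in D}\|v\| < \infty$ uniformly in $x \in B$, and by Lipschitz continuity of $\nabla F$ on the compact set $B \times D$ the quantity $\|\nabla_y F(x, \bar y)\|$ is also uniformly bounded. Writing the saddle-point optimality conditions in stacked form with the operator $F_k(y,z) := (-\nabla_y \psi_{\rho_k,\sigma_k}(x,y,z),\;\nabla_z \psi_{\rho_k,\sigma_k}(x,y,z))$, I would first verify that the cross-terms $\pm \sigma_k \langle y_1 - y_2, z_1 - z_2\rangle$ produced by the coupling $-\sigma \langle y, z\rangle$ cancel, giving the joint strong monotonicity estimate $\langle F_k(u_1) - F_k(u_2), u_1 - u_2\rangle \ge \mu \|y_1 - y_2\|^2 + \sigma_k \|z_1 - z_2\|^2$.

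To bound $\|y^*_k\|$, I would combine this strong monotonicity with the saddle-point VI evaluated at the test point $(\bar y, \bar y)$ to obtain
$$\mu \|y^*_k - \bar y\|^2 + \sigma_k \|z^*_k - \bar y\|^2 \;\le\; -\bigl\langle F_k(\bar y, \bar y),\,(y^*_k - \bar y,\, z^*_k - \bar y)\bigr\rangle.$$
Direct computation gives $-F_k(\bar y, \bar y) = (\nabla_y F(x, \bar y) - \rho_k \nabla_y f(x, \bar y) - \sigma_k \bar y,\; -\rho_k \nabla_y f(x, \bar y))$, so the right-hand side splits into a $\rho_k$-free part controlled by $\|\nabla_y F(x, \bar y) - \sigma_k \bar y\| \cdot \|y^*_k - \bar y\|$ and a $\rho_k$-part proportional to $-\rho_k \langle \nabla_y f(x, \bar y),\,(y^*_k - \bar y) + (z^*_k - \bar y)\rangle$. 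Since $\bar y$ minimizes $f(x,\cdot)$ over $Y$ and both $y^*_k, z^*_k \in Y$, the lower-level VI forces this $\rho_k$-part to be nonpositive and hence it may be dropped. What remains gives $\mu \|y^*_k - \bar y\| \le \|\nabla_y F(x, \bar y) - \sigma_k \bar y\| \le C$ for a constant $C$ independent of $k$ and $x \in B$, yielding a uniform bound on $\|y^*_k\|$.

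The same inequality only produces $\|z^*_k - \bar y\|^2 \le C \|y^*_k - \bar y\|/\sigma_k$, which blows up as $\sigma_k \to 0$. To obtain a uniform bound on $\|z^*_k\|$ I would instead use the $z$-component VI alone at the test point $\bar y$:
$$\bigl\langle \rho_k \nabla_y f(x, z^*_k) + \sigma_k(z^*_k - y^*_k),\, \bar y - z^*_k \bigr\rangle \ge 0.$$
Convexity of $f(x,\cdot)$ and the lower-level optimality of $\bar y$ give $\langle \nabla_y f(x, z^*_k),\, \bar y - z^*_k\rangle \le f(x, \bar y) - f(x, z^*_k) \le 0$, so the $\rho_k$-term is nonpositive and may be dropped; dividing through by $\sigma_k > 0$ reduces the inequality to $\langle z^*_k - y^*_k,\, \bar y - z^*_k\rangle \ge 0$. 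Expanding and applying Cauchy–Schwarz yields the quadratic estimate $\|z^*_k\|^2 \le (\|\bar y\| + \|y^*_k\|)\|z^*_k\| + \|\bar y\|\|y^*_k\|$, which together with the uniform bound on $\|y^*_k\|$ and the uniform bound on $\|\bar y\|$ provides a uniform bound on $\|z^*_k\|$. Taking the larger of the two bounds produces the constant $M$.

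The main obstacle is the asymmetry between the two strong-monotonicity parameters $\mu$ and $\sigma_k \to 0$: the joint VI estimate by itself cannot deliver a uniform bound on $\|z^*_k\|$. The key conceptual point is that, once $\|y^*_k\|$ is controlled using the $y$-component of the test direction, the $z$-VI alone—combined with convexity of $f(x,\cdot)$ and the lower-level optimality of $\bar y$ to eliminate the $\rho_k$-penalty—yields a $\sigma_k$-independent algebraic inequality from which $\|z^*_k\|$ can be bounded.
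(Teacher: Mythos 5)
Your proof is correct, but it takes a different route from the paper's, at least for the bound on $y^*_k$. The paper argues both bounds by contradiction using only function values of $\psi_k$: for $y^*_k$ it exploits the coercivity coming from $\mu$-strong concavity of $F(x,\cdot)$ (so $\psi_k(x_k,y^*_k,\hat z_k)\to-\infty$ along an unbounded sequence) together with the chain of saddle-point inequalities $\psi_k(x_k,y^*_k,\hat z_k)\ge\psi_k(x_k,y^*_k,z^*_k)\ge\psi_k(x_k,\hat y_k,z^*_k)$ and a lower bound on the last term; for $z^*_k$ it compares $\psi_k(x_k,y^*_k,z^*_k)\le\psi_k(x_k,y^*_k,\hat z_k)$ and cancels the $\rho_k$-terms to get $\|z^*_k-y^*_k\|\le\|\hat z_k-y^*_k\|$. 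You instead work at the level of first-order conditions: the joint strong monotonicity of the saddle operator (the paper's \eqref{T_mono}, whose cross-term cancellation you correctly verify) plus the variational inequality at the test point $(\bar y,\bar y)$, with the $\rho_k$-terms killed by the lower-level optimality condition $\langle\nabla_y f(x,\bar y),v-\bar y\rangle\ge0$; this yields the direct, quantitative bound $\mu\|y^*_k-\bar y\|\le\|\nabla_y F(x,\bar y)-\sigma_k\bar y\|$ rather than a contradiction. Your $z$-bound, via the $z$-component VI and the gradient inequality for the convex $f(x,\cdot)$, reduces after dividing by $\sigma_k$ to $\langle z^*_k-y^*_k,\bar y-z^*_k\rangle\ge0$, which is in substance the same $\sigma_k$-free inequality the paper obtains (indeed it implies $\|z^*_k-y^*_k\|\le\|\bar y-y^*_k\|$, slightly sharper than your quadratic estimate). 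What your approach buys is explicit constants and no contradiction scaffolding; what the paper's buys is that it never needs the monotonicity of the operator, only function-value comparisons. Both correctly identify the key structural point you highlight: the coupling term makes the cross-terms cancel, and the $z$-bound must come from an inequality in which $\sigma_k$ has been divided out. One cosmetic remark: your operator is named $F_k$, which collides with the upper-level objective $F$; the paper calls it $T_k$, and continuity of $\nabla F$ on the compact set $B\times D$ (rather than Lipschitzness) already suffices for the uniform bound on $\|\nabla_y F(x,\bar y)\|$.
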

\begin{proof}
The proof proceeds in two parts, establishing the boundedness of $\{y^*_{k}(x)\}$ and $\{z^*_{k}(x)\}$ separately, both by contradiction.

Suppose, for the sake of contradiction, that $\{y^*_{k}(x)\}$ is not uniformly bounded. Then there exists a sequence $\{x_k\} \subset B$
such that $\|y^*_{k}(x_k)\| \rightarrow \infty$ as $k \rightarrow \infty$.

By Assumption \ref{assum2}, for each $x_k \in B$, there exits $\hat{y}_k, \hat{z}_k$ such that $\hat{y}_k = \hat{z}_k \in \mathcal{S}(x_k) \cap D$, where $D \subset Y$ is a compact set. Thus, the sequences $\{\hat{y}_k\}$ and $\{\hat{z}_k\}$ are uniformly bounded. Since $F(x,y)$ is continuous differentiable on $X \times Y$ and is $\mu$-strongly concave in $y$ for any $x \in X$, and $\sigma_k \rightarrow 0$, we have 
\begin{equation}\label{lem_a2_eq1}
    \lim_{k \rightarrow \infty} F(x_{k},y_{k}^*(x_{k}))+ \frac{\sigma_k}{2}\|\hat{z}_k\|^2 - \sigma_k \langle y_{k}^*(x_{k}), \hat{z}_k \rangle = - \infty.
\end{equation}    
Next, since $\hat{z}_k \in \mathcal{S}(x_k)$, we know that $f(x_k,  y_{k}^*(x_{k})) \ge f(x_k, \hat{z}_k)$. Given $\rho_k > 0$, it follows that:
\[
\begin{aligned}
    & \quad \, \psi_k(x_k, y_{k}^*(x_{k}), \hat{z}_k) \\
    &= F(x_k,y_{k}^*(x_{k})) - \rho_k( f(x_k,  y_{k}^*(x_{k})) - f(x_k, \hat{z}_k)) + \frac{\sigma_k}{2}\|\hat{z}_k\|^2 - \sigma_k \langle y_{k}^*(x_{k}), \hat{z}_k \rangle \\
    & \le F(x_k,y_{k}^*(x_{k})) + \frac{\sigma_k}{2}\|\hat{z}_k\|^2 - \sigma_k \langle y_{k}^*(x_{k}), \hat{z}_k \rangle.
\end{aligned}
\]
From \eqref{lem_a2_eq1}, we deduce:
\begin{equation}\label{lem_a2_eq2}
 \lim_{k \rightarrow \infty}  \psi_k(x_k, y_{k}^*(x_{k}), \hat{z}_k) = - \infty.
\end{equation}
By the saddle point property of $(y_k^*(x_k), z_k^*(x_k))$:
    \begin{equation*}
        \psi_k(x_k, y_{k}^*(x_{k}), \hat{z}_k) \ge \psi_k(x_k,y_{k}^*(x_k), z_{k}^*(x_k))\ge \psi_k(x_k, \hat{y}_k, z_{k}^*(x_k)).
    \end{equation*}
Combining this with \eqref{lem_a2_eq2} yields:
\begin{equation}\label{lem_a2_eq3}
     \lim_{k \rightarrow \infty} \psi_k(x_k, \hat{y}_k, z_{k}^*(x_k)) = - \infty.
\end{equation}
Since $\hat{y}_k \in \mathcal{S}(x_k)$, we have $f(x_k,  z_{k}^*(x_{k})) \ge f(x_k, \hat{y}_k)$. Thus:
\[
\begin{aligned}
    \psi_k(x_k, \hat{y}_k, z_{k}^*(x_k)) &= F(x_k, \hat{y}_k) - \rho_k( f(x_k, \hat{y}_k) - f(x_k, z_{k}^*(x_k)) + \frac{\sigma_k}{2}\|z_{k}^*(x_k)\|^2 - \sigma_k \langle \hat{y}_k, z_{k}^*(x_k) \rangle \\
    & \ge F(x_k, \hat{y}_k) + \frac{\sigma_k}{2}\|z_{k}^*(x_k)\|^2 - \sigma_k \langle \hat{y}_k, z_{k}^*(x_k) \rangle \\
    & = F(x_k, \hat{y}_k) + \frac{\sigma_k}{2}\|z_{k}^*(x_k) - \hat{y}_k\|^2 -  \frac{\sigma_k}{2}\|\hat{y}_k\|^2 \\
    & \ge F(x_k, \hat{y}_k) -  \frac{\sigma_k}{2}\|\hat{y}_k\|^2.
\end{aligned}
\]
Since $\{x_k\}$ and $\{\hat{y}_k\}$ are bounded, and $F(x,y)$ is continuous on $X \times Y$, $F(x_k, \hat{y}_k) -  \frac{\sigma_k}{2}\|\hat{y}_k\|^2$ is bounded. As $\sigma_k \rightarrow 0$, the term $ F(x_k, \hat{y}_k) -  \frac{\sigma_k}{2}\|\hat{y}_k\|^2$ is bounded below. This contradicts \eqref{lem_a2_eq3}. Therefore, our initial assumption was false, and there must exist $M > 0$ such that $\|y^*_{k}(x)\| \le M$ for all $k$ and $x \in B$.

Next, we show that there exists $M > 0$ such that $\|z^*_{k}(x)\| \le M$ for any $k$ and $x \in B$. Suppose, for the sake of contradiction, that $\{z^*_{k}(x)\}$ is not uniformly bounded. Then there exists sequence $\{x_k\} \subset B$ such that $\|z^*_{k}(x_k)\| \rightarrow \infty$ as $k \rightarrow \infty$. By Assumption \ref{assum2}, for each $x_k$, there exists $\hat{z}_k \in \mathcal{S}(x_k) \cap D$ for a compact set $D$, so  $\{\hat{z}_k\}$ is bounded.

From the saddle point property, $z_k^*(x_k)$  minimizes $\psi_{k}(x_k, y_k^*(x_k), z)$ over $z \in Y$. Thus:
\[
\psi_{k}(x_k, y_k^*(x_k), z_k^*(x_k)) \le \psi_{k}(x_k, y_k^*(x_k). \hat{z}_k) 
\]
Expanding this inequality, simplifying and rearranging terms:
\[
\rho_k f(x_k, z_{k}^*(x_k)) + \frac{\sigma_k}{2}\|z_{k}^*(x_k) - y_k^*(x_k)\|^2  \le \rho_k f(x_k, \hat{z}_k) + \frac{\sigma_k}{2}\|\hat{z}_k - y_k^*(x_k)\|^2.
\]
Combining the above inequality with the fact that $f(x_k, z_{k}^*(x_k)) \ge f(x_k, \hat{z}_k)$ yields that:
\begin{equation}\label{lem_a2_eq4}
    \|z_{k}^*(x_k) - y_k^*(x_k)\|^2 \le \frac{2\rho_k}{\sigma_k}\left(f(x_k, \hat{z}_k)- f(x_k, z_{k}^*(x_k)) \right) + \|\hat{z}_k - y_k^*(x_k)\|^2 \le \|\hat{z}_k - y_k^*(x_k)\|^2.
\end{equation}
The right-hand side of \eqref{lem_a2_eq4} is bounded because $\{\hat{z}_k\}$ and $\{y_k^*(x_k)\}$ are bounded. However, since $\|z^*_{k}(x_k)\| \rightarrow \infty$ and $\{y_k^*(x_k)\}$ is bounded, the left-hand side 
$\|z_{k}^*(x_k) - y_k^*(x_k)\|^2 \rightarrow \infty$. This presents a contradiction. Thus, our assumption was false, and there exists $M > 0$ such that for any $k$ and $x \in B$, $\|z^*_{k}(x)\| \le M$.
\end{proof}

Next, we demonstrate that accumulation points of $\{y^*_{\rho_k,\sigma_k}(x)\}$ belong to the solution set $\mathcal{S}(\bar{x})$ when $x_k \rightarrow \bar{x}$.

\begin{lemma}\label{valueofy}
Let $\{\rho_k\}$ and $\{\sigma_k\}$ be sequences such that $\rho_k \rightarrow \infty$ and $\sigma_k \rightarrow 0$ as $k \rightarrow \infty$. Then, for any sequence $\{x_k\} \subset X$ such that $x_k \rightarrow \bar{x} \in X$ as $k \rightarrow \infty$, we have
	\begin{equation}
	\lim\limits_{k\to\infty}f(x_k,y_{k}^*(x_k))\le  \min_{y \in Y} f(\bar{x}, y).
	\end{equation}
Consequently, for any accumulation point $\bar{y}$ of sequence $\{y_{k}^*(x_k)\}$, we have $\bar{y} \in \mathcal{S}(\bar{x})$.
\end{lemma}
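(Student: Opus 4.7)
The plan is to exploit only the $y$-side of the saddle-point property to compare $y_k^*(x_k)$ against a concrete element of the lower-level solution set, extract an inequality whose leading coefficient is $\rho_k$, and then use $\rho_k\to\infty$ to force the $f$-gap between $y_k^*(x_k)$ and $\mathcal{S}(x_k)$ to close. First, since $\{x_k\}$ is convergent and therefore lies in a compact subset of $X$, Lemma \ref{saddlepointboundedness} delivers uniform boundedness of $\{y_k^*(x_k)\}$ and $\{z_k^*(x_k)\}$, and Assumption \ref{assum2} lets me choose $\hat{y}_k\in\mathcal{S}(x_k)\cap D$ with $D$ compact, so $\{\hat{y}_k\}$ is bounded as well.

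Next I would invoke the saddle-point property purely in the $y$ direction: $y_k^*(x_k)$ maximizes $\psi_k(x_k,\cdot,z_k^*(x_k))$ over $Y$, hence $\psi_k(x_k,y_k^*(x_k),z_k^*(x_k))\ge \psi_k(x_k,\hat{y}_k,z_k^*(x_k))$. Substituting the definition of $\psi_k$ from \eqref{psi}, the $\tfrac{\sigma_k}{2}\|z_k^*(x_k)\|^2$ and $\rho_k f(x_k,z_k^*(x_k))$ terms cancel, and rearrangement produces
\[
\rho_k\bigl(f(x_k,y_k^*(x_k))-f(x_k,\hat{y}_k)\bigr) \le F(x_k,y_k^*(x_k))-F(x_k,\hat{y}_k) + \sigma_k\langle \hat{y}_k-y_k^*(x_k),\,z_k^*(x_k)\rangle.
\]
The right-hand side is $O(1)$ because $F$ is continuous on a compact set and the trailing inner-product term is the product of a bounded quantity with $\sigma_k\to 0$. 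Because $\hat{y}_k\in\mathcal{S}(x_k)$ gives $f(x_k,y_k^*(x_k))\ge f(x_k,\hat{y}_k)$, the left-hand side is nonnegative, so dividing by $\rho_k\to\infty$ yields $f(x_k,y_k^*(x_k))-f(x_k,\hat{y}_k)\to 0$.

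To convert this into the claimed upper bound, I would exploit the fact that $\hat{y}_k$ is a lower-level optimum: for every fixed $y\in Y$, $f(x_k,\hat{y}_k)\le f(x_k,y)$, so continuity of $f$ together with $x_k\to\bar{x}$ gives $\limsup_k f(x_k,\hat{y}_k)\le f(\bar{x},y)$, and taking the infimum over $y\in Y$ produces $\limsup_k f(x_k,\hat{y}_k)\le \min_{y\in Y} f(\bar{x},y)$. Combining this with the previous step delivers the first conclusion of the lemma.

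Finally, for the second conclusion, if $\bar{y}$ is an accumulation point of $\{y_k^*(x_k)\}$, then along a subsequence $y_k^*(x_k)\to \bar{y}\in Y$ (using closedness of $Y$), and continuity of $f$ gives $f(x_k,y_k^*(x_k))\to f(\bar{x},\bar{y})$; the first conclusion then forces $f(\bar{x},\bar{y})\le \min_{y\in Y}f(\bar{x},y)$, so $\bar{y}\in\mathcal{S}(\bar{x})$. The main technical obstacle is arranging the saddle-point comparison so that the $\rho_k$-terms end up on the same side with a nonnegative sign, while the residual carrying $F$ and $\sigma_k$ stays uniformly bounded; the use of $\hat{y}_k\in\mathcal{S}(x_k)\cap D$ from Assumption \ref{assum2} is precisely what makes this cancellation work without needing the $z$-side of the saddle-point inequality.
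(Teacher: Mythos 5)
Your proof is correct and follows essentially the same route as the paper: compare $y_k^*(x_k)$ against a lower-level minimizer via the $y$-side of the saddle-point inequality, cancel the $z$-terms, and divide by $\rho_k\to\infty$ while the $F$- and $\sigma_k$-residuals stay bounded thanks to Lemma \ref{saddlepointboundedness} and Assumption \ref{assum2}. The only (harmless) difference is your choice of a moving comparison point $\hat{y}_k\in\mathcal{S}(x_k)\cap D$ rather than the paper's fixed $\hat{y}\in\mathcal{S}(\bar{x})$, which costs you one extra step ($\limsup_k f(x_k,\hat{y}_k)\le\min_{y\in Y}f(\bar{x},y)$) that the paper gets for free from continuity.
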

	 \begin{proof}
     Let $\hat{y}$ be an arbitrary point in $\mathcal{S}(\bar{x})$. From the saddle point property, $y_k^*(x_k)$ maximizes $\psi_{k}(x_k, y, z_k^*(x_k))$ over $y \in Y$. Thus:
\[
\psi_{k}(x_k, y_k^*(x_k), z_k^*(x_k)) \ge \psi_{k}(x_k, \hat{y}, z_k^*(x_k)). 
\]
Expanding this inequality:
\[
F(x_k,y_{k}^*(x_{k})) - \rho_k f(x_k,  y_{k}^*(x_{k})) - \sigma_k \langle y_{k}^*(x_{k}), z_k^*(x_k) \rangle \ge F(x_k, \hat{y}) - \rho_k f(x_k, \hat{y}) - \sigma_k \langle \hat{y}, z_k^*(x_k) \rangle. 
\]
Rearranging this inequality to isolate terms involving $f$, and since $\rho_k > 0$, we can divide by $\rho_k$:
\[
f(x_k,  y_{k}^*(x_{k})) - f(x_k, \hat{y}) \le \frac{1}{\rho_k}\left( F(x_k,y_{k}^*(x_{k})) - F(x_k, \hat{y})\right) + \frac{\sigma_k}{\rho_k} \|y_{k}^*(x_{k}) - \hat{y}\| \| z_k^*(x_k) \|.
\]
By Lemma \ref{saddlepointboundedness}, $\{y_{k}^*(x_k)\}$ and $\{z_{k}^*(x_k)\}$ are uniformly bounded. Since $F(x,y)$ is continuous on $X \times Y$ and $\{x_k\}$ converges, $F(x_k,y_{k}^*(x_{k}))$ and $F(x_k, \hat{y})$ are bounded. Given $\rho_k \rightarrow \infty$ and $\sigma_k \rightarrow 0$, the entire right-hand side of the inequality converges to $0$ as $k \rightarrow \infty$. Therefore, by taking $k \rightarrow \infty$ in the above inequality, and since $f(x,y)$ is continuous on $X \times Y$, we have 
\[
\limsup_{k \rightarrow \infty} f(x_k,  y_{k}^*(x_{k})) \le \lim_{k \rightarrow \infty} f(x_k, \hat{y}) = \min_{y \in Y} f(\bar{x}, y).
\]
This concludes the proof.
\end{proof}

	 \begin{proof}[proof of Lemma \ref{lem21}]
We prove the first statement (i.e., $\limsup\limits_{k\to\infty} \phi_{k}(\bar{x}) \le \phi(\bar{x})$) by contradiction. Suppose there exist $\bar{x} \in X$ and $\delta>0$ such that
     \begin{equation*}
         	 	\limsup\limits_{k\to\infty} \phi_{k}(\bar{x}) > \phi(\bar{x}) + \delta.
     \end{equation*}
     Then, by properties of $\limsup$, there exists a subsequence (which we re-index by $k$ for simplicity) such that
    \begin{equation*}
         	 	\lim\limits_{k\to\infty} \phi_{k}(\bar{x}) > \phi(\bar{x}) + \delta.
     \end{equation*}
     Recall that $(y_k^*(\bar{x}), z_k^*(\bar{x}))$ is the saddle point for the minimax problem $\min_{z\in {Y}} \max_{ y \in {Y}}\psi_{k}(\bar{x}, y, z)$. Thus, $\phi_{k}(\bar{x}) = \psi_k (\bar{x},y_k^*(\bar{x}), z_k^*(\bar{x}))$. Expanding $\psi_k$, we have:
     \begin{equation}\label{lem21_eq1}
     F(\bar{x}, y_k^*(\bar{x}) ) - \rho_k( f(\bar{x}, y_k^*(\bar{x}) ) - f(\bar{x}, z_k^*(\bar{x}) ) + \frac{\sigma_k}{2}\| z_k^*(\bar{x}) \|^2 - \sigma_k \langle y_k^*(\bar{x}) , z_k^*(\bar{x}) \rangle \ge \phi(\bar{x}) + \delta.
     \end{equation}
	By Lemma \ref{saddlepointboundedness},  $\{y_k^*(\bar{x})\}$ is bounded. Thus, we can extract a further subsequence (again re-indexed by $k$) such that $y_k^*(\bar{x}) \rightarrow \bar{y}$ for some $\bar{y} \in Y$. By Lemma \ref{valueofy}, this implies $\bar{y} \in \mathcal{S}(\bar{x})$.

From the saddle point property, $z_k^*(\bar{x})$ minimizes $\psi_{k}(\bar{x}, y_k^*(\bar{x}), z)$ over $z \in Y$. Therefore,
    \[
    \psi_k (\bar{x},y_k^*(\bar{x}), z_k^*(\bar{x})) \le \psi_k (\bar{x},y_k^*(\bar{x}), y_k^*(\bar{x})).
    \]
Expanding this:
\[
 \rho_k f(\bar{x}, z_k^*(\bar{x}) ) + \frac{\sigma_k}{2}\| z_k^*(\bar{x}) \|^2 - \sigma_k \langle y_k^*(\bar{x}) , z_k^*(\bar{x}) \rangle\le  \rho_k f(\bar{x}, y_k^*(\bar{x})  ) - \frac{\sigma_k}{2}\| y_k^*(\bar{x}) \|^2.
    \]
Rearranging:  
\[
    \rho_k \left( f(\bar{x}, z_k^*(\bar{x}) )  - f(\bar{x}, y_k^*(\bar{x})  )  \right) + \frac{\sigma_k}{2}\|z_k^*(\bar{x}) - y_k^*(\bar{x})\|^2 \le 0.
\]
    Combing this with \eqref{lem21_eq1} yields that
    \[
    F(\bar{x}, y_k^*(\bar{x}) ) - \frac{\sigma_k}{2}\| y_k^*(\bar{x}) \|^2  \ge \phi(\bar{x}) + \delta.
    \]
    Taking $k \rightarrow \infty$ in the above inequality, since $F(x,y)$ is continuous on $X \times Y$, $\{y_k^*(\bar{x})\}$ is bounded and $\sigma_k \rightarrow 0$, we have
    \[
     F(\bar{x}, \bar{y}) \ge \phi(\bar{x}) + \delta.
    \]
    However, since $\bar{y} \in \mathcal{S}(\bar{x})$, by the definition $\phi(\bar{x}) = \max_{y \in \mathcal{S}(\bar{x})} F(\bar{x}, y) $, we must have $F(\bar{x}, \bar{y}) \le \phi(\bar{x})$. This leads to $\phi(\bar{x}) \ge F(\bar{x}, \bar{y}) \ge \phi(\bar{x}) + \delta$. Since $\delta >0$, this is a contradiction. Therefore, the initial assumption was false, and we must have
         \begin{equation*}
         	 	\limsup\limits_{k\to\infty} \phi_{k}({x}) \le \phi({x}), \quad \forall x \in X.
     \end{equation*}
     The second conclusion then follows from this result and the Proposition 7.30 in \cite{rockafellar2009variational}.	
	 \end{proof}

\subsection{Proof for Proposition \ref{prop21}}

For any given $x \in X$, Assumption \ref{assum2} ensures that the set $\mathcal{S}(x)$ is nonempty and closed. Combined with the $\mu$-strong concavity of $F(x,y)$ with respect to $y$, this guarantees the existence of a unique maximizer $y^*(x) \in \mathcal{S}(x)$ such that $\phi(x) = F(x, y^*)$, i.e., $y^*(x) = \arg\max_{y \in \mathcal{S}(x)}F(x,y)$.

We first establish a uniform boundedness property for $y^*(x)$ when $x$ is restricted to a compact set.
\begin{lemma}\label{lem_a4}
Let $B$ be a compact set in $X$. Then, there exists a constant $M > 0$ such that for any $x \in B$,
 \[
 \|y^*(x)\| \le M,
 \]    
 where $y^*(x) = \arg\max_{y \in \mathcal{S}(x)}F(x,y)$.
\end{lemma}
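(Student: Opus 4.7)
The plan is to argue by contradiction and exploit the strong concavity of $F(x,\cdot)$ together with Assumption \ref{assum2}, which provides a bounded ``anchor'' point inside $\mathcal{S}(x)$ for every $x$ in the compact set $B$.

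Suppose the conclusion fails. Then there exist sequences $\{x_k\} \subset B$ with $\|y^*(x_k)\| \to \infty$. By Assumption \ref{assum2}, there is a bounded set $D$ such that $\mathcal{S}(x_k) \cap D \neq \varnothing$ for each $k$; pick $\hat{y}_k \in \mathcal{S}(x_k) \cap D$, so that $\{\hat{y}_k\}$ is bounded. Since $y^*(x_k)$ maximizes $F(x_k,\cdot)$ over $\mathcal{S}(x_k)$ and $\hat{y}_k \in \mathcal{S}(x_k)$, we have $F(x_k, y^*(x_k)) \ge F(x_k, \hat{y}_k)$.

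The main step is to convert this inequality, via $\mu$-strong concavity of $F(x_k,\cdot)$, into a quantitative bound on $\|y^*(x_k) - \hat{y}_k\|$. Strong concavity applied at the reference point $\hat{y}_k$ gives
\[
F(x_k, y^*(x_k)) \le F(x_k, \hat{y}_k) + \langle \nabla_y F(x_k,\hat{y}_k), y^*(x_k) - \hat{y}_k \rangle - \frac{\mu}{2}\|y^*(x_k) - \hat{y}_k\|^2.
\]
Combining with $F(x_k, y^*(x_k)) \ge F(x_k,\hat{y}_k)$ and applying Cauchy--Schwarz yields
\[
\|y^*(x_k) - \hat{y}_k\| \le \frac{2}{\mu}\|\nabla_y F(x_k, \hat{y}_k)\|.
\]
Since $\nabla F$ is Lipschitz continuous (and hence bounded) on $X\times Y$ restricted to bounded subsets, and since $\{(x_k,\hat{y}_k)\}$ lies in the compact set $B\times \overline{D}$, the right-hand side is uniformly bounded in $k$. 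By the triangle inequality, $\{y^*(x_k)\}$ is bounded, contradicting $\|y^*(x_k)\|\to\infty$.

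The only delicate point is making sure that the ``anchor'' $\hat{y}_k$ is in the whole solution set $\mathcal{S}(x_k)$ (not only a minimizer of some surrogate), which is exactly what the second part of Assumption \ref{assum2} provides; once that is in hand, the rest is a standard strong-concavity estimate together with continuity of $\nabla F$ on a compact set.
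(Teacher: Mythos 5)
Your proof is correct and follows essentially the same route as the paper's: a contradiction argument that compares $F(x_k, y^*(x_k))$ with $F(x_k,\hat{y}_k)$ at a bounded anchor $\hat{y}_k \in \mathcal{S}(x_k)\cap D$ supplied by Assumption \ref{assum2}, and invokes $\mu$-strong concavity to rule out $\|y^*(x_k)\|\to\infty$. The only difference is that you make the strong-concavity step quantitative, deriving the explicit bound $\|y^*(x_k)-\hat{y}_k\|\le \tfrac{2}{\mu}\|\nabla_y F(x_k,\hat{y}_k)\|$, whereas the paper argues qualitatively that strong concavity forces $F(x_k,y^*(x_k))\to-\infty$; your version actually spells out the estimate the paper leaves implicit.
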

\begin{proof}
    Suppose, for the sake of contradiction, that such a uniform bound $M$ does not exist. Then there must exist a sequence $\{x_k\} \subset B$ such that $\|y^*(x_k)\| \rightarrow \infty$ as $k\rightarrow \infty$. According to Assumption \ref{assum2}, for each $x_k$, there exists an element $y_k \in \mathcal{S}(x_k) \cap D$, where $D$ is a specified compact set. Consequently, the sequence $\{y_k\}$ is uniformly bounded.

    Because $F(x,y)$ is continuous differentiable on $X \times Y$ and is $\mu$-strongly concave in $y$ for any $x \in X$, $\|y^*(x_k)\| \rightarrow \infty$ leading to:
\begin{equation}\label{lem_a4_eq1}
    \lim_{k \rightarrow \infty} F(x_{k},y^*(x_{k})) = - \infty.
\end{equation}    
    By the definition of $y^*(x_k)$ as the maximizer of  $ \max_{y \in \mathcal{S}(x_k)}F(x_k,y)$, and since $y_k \in \mathcal{S}(x_k) \cap D$, we have:
    \[
    F(x_{k},y^*(x_{k})) \ge F(x_{k},y_k).
    \]
    Given that $(x_{k},y^*(x_{k})) \rightarrow - \infty$ from \eqref{lem_a4_eq1}, it must also hold that:
    \[
        \lim_{k \rightarrow \infty} F(x_{k},y_k) = - \infty.
    \]
    However, since both $\{x_k\}$ and $\{y_k\}$ are bounded, and $F(x,y)$ is continuous on $X \times Y$, the sequence $\{F(x_{k},y_k)\}$ must be bounded below. This contradicts the finding that $F(x_{k},y_k) \rightarrow - \infty$.  Thus, our initial assumption must be false, and we get the conclusion.
\end{proof}

Next, we establish an inequality relating $ \phi_{\rho,\sigma}(x)$ and $\phi(x)$. 

\begin{lemma}\label{lem_a5}
 Let $\rho, \sigma >0$ be given constants. Then, for any $x \in X$, 
\[
 \phi_{\rho,\sigma}(x) \ge \phi(x)- \frac{\sigma}{2}\|y^*(x)\|^2,
\]
where $y^*(x) = \arg\max_{y \in \mathcal{S}(x)}F(x,y)$.
\end{lemma}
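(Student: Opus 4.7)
The plan is to exploit the saddle-point structure established in Theorem \ref{differentiable}, namely the minimax equality $\phi_{\rho,\sigma}(x) = \min_{z \in Y}\max_{y \in Y}\psi_{\rho,\sigma}(x,y,z) = \max_{y \in Y}\min_{z \in Y}\psi_{\rho,\sigma}(x,y,z)$. Using the max-min form, any specific choice of $y \in Y$ provides a lower bound via $\phi_{\rho,\sigma}(x) \ge \min_{z \in Y}\psi_{\rho,\sigma}(x,y,z)$. The natural choice is $y = y^*(x)$, which lies in $\mathcal{S}(x) \subseteq Y$ and will allow the penalty term to vanish in a clean way.

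Substituting $y = y^*(x)$ into $\psi_{\rho,\sigma}$ and completing the square in the coupling and quadratic $z$-terms gives
\begin{equation*}
\psi_{\rho,\sigma}(x, y^*(x), z) = F(x, y^*(x)) - \rho f(x, y^*(x)) + \rho f(x,z) + \frac{\sigma}{2}\|z - y^*(x)\|^2 - \frac{\sigma}{2}\|y^*(x)\|^2.
\end{equation*}
The first two and the last summand do not depend on $z$, so I only need a lower bound on $\min_{z \in Y}\bigl[\rho f(x,z) + \tfrac{\sigma}{2}\|z - y^*(x)\|^2\bigr]$. Since $y^*(x) \in \mathcal{S}(x)$ is a minimizer of $f(x,\cdot)$ over $Y$, we have $\rho f(x,z) \ge \rho f(x, y^*(x))$ for every $z \in Y$, and the quadratic term is nonnegative; thus the minimum is bounded below by $\rho f(x, y^*(x))$ (and in fact attained at $z = y^*(x)$).

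Plugging this bound back cancels the $\rho f(x, y^*(x))$ contributions and leaves $\min_{z \in Y}\psi_{\rho,\sigma}(x, y^*(x), z) \ge F(x, y^*(x)) - \tfrac{\sigma}{2}\|y^*(x)\|^2 = \phi(x) - \tfrac{\sigma}{2}\|y^*(x)\|^2$, which delivers the claim. No obstacle is genuinely hard here: the only care needed is the algebraic identity $\tfrac{\sigma}{2}\|z\|^2 - \sigma\langle y^*(x), z\rangle = \tfrac{\sigma}{2}\|z - y^*(x)\|^2 - \tfrac{\sigma}{2}\|y^*(x)\|^2$, which shows precisely how the coupling term $\langle y,z\rangle$ (highlighted in the discussion after equation \eqref{psi}) combines with the regularizer to produce a squared deviation plus a $-\tfrac{\sigma}{2}\|y^*(x)\|^2$ residue --- exactly the slack appearing in the lemma's bound. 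The use of the max-min formulation rather than min-max is essential: had we tried to upper-bound via a specific $z$, the inner maximum over $y$ would not simplify.
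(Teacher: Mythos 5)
Your proof is correct and follows essentially the same route as the paper's: both use the max--min form of $\phi_{\rho,\sigma}$, fix $y = y^*(x)$, complete the square to rewrite the $z$-dependent terms as $\rho f(x,z) + \tfrac{\sigma}{2}\|z-y^*(x)\|^2 - \tfrac{\sigma}{2}\|y^*(x)\|^2$, and use $y^*(x)\in\mathcal{S}(x)$ to conclude the inner minimum equals (or is bounded below by) the value at $z = y^*(x)$. The only cosmetic difference is that the paper identifies $z = y^*(x)$ as the exact argmin while you argue via a lower bound, which suffices equally well.
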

\begin{proof}

For notational brevity within this proof, let $y^*$ denote $y^*(x)$.
Because $\psi_{\rho,\sigma}(x, y, z)$ is strongly concave in $y$ and strongly convex in $z$, we have:
\[ 
\phi_{\rho,\sigma}(x) =\min_{z\in {Y}} \max_{ y \in {Y}}\; \psi_{\rho,\sigma}(x, y, z) =  \max_{ y \in {Y}} \min_{z\in {Y}}\; \psi_{\rho,\sigma}(x, y, z). 
\]
From the max-min formulation, it follows that for any specific choice of $y$, such as $y = y^*$,
\begin{equation}\label{lem_a5_eq1}
    \phi_{\rho,\sigma}(x) \ge \min_{z\in {Y}}\; \psi_{\rho,\sigma}(x, y^*, z).
\end{equation}
Since
\[
\psi_{\rho,\sigma}(x, y^*, z) = F(x,y^*)-\rho f(x,y^*) + \rho f(x,z)+\frac{\sigma}{2}\|z\|^2-\sigma\langle y^*, z\rangle,
\]
to find $\min_{z\in {Y}} \psi_{\rho,\sigma}(x, y^*, z)$, we can minimize the terms dependent on $z$:
\[
\underset{z\in {Y}}{\mathrm{argmin}}\; \psi_{\rho,\sigma}(x, y^*, z) = \underset{z\in {Y}}{\mathrm{argmin}}\; \left\{ \rho f(x,z)+\frac{\sigma}{2}\|z - y^*\|^2 \right\}.
\]
Since $y^*\in \mathcal{S}(x)$, it follows that
\[
\underset{z\in {Y}}{\mathrm{argmin}}\; \psi_{\rho,\sigma}(x, y^*, z) = \{ y^*\}.
\]
Substituting $z = y^*$ into $\psi_{\rho,\sigma}(x, y^*, z) $:
\[
\min_{z\in {Y}}\; \psi_{\rho,\sigma}(x, y^*, z) = \psi_{\rho,\sigma}(x, y^*, y^*) = F(x,y^*) - \frac{\sigma}{2}\|y^*\|^2 = \phi(x)- \frac{\sigma}{2}\|y^*\|^2.
\]
Combining this with \eqref{lem_a5_eq1}, the conclusion follows.
\end{proof}

Now, we are ready to provide the proof for Proposition \ref{prop21}.

\begin{proof}[Proof of Proposition \ref{prop21}]
For any $\epsilon > 0$ and for each $k$, by the definition of infimum, there exists an $x_k \in X$ such that
\begin{equation}\label{prop21_eq1}
     \phi_{\rho_k, \sigma_k}(x_k) \le \inf_{ x\in {X}}\phi_{\rho_k, \sigma_k}(x)+ \epsilon.
\end{equation}
Applying Lemma \ref{lem_a5} to $\phi_{\rho_k, \sigma_k}(x_k)$:
\begin{equation}\label{prop21_eq2}
    \phi_{\rho_k, \sigma_k}(x_k) \ge \phi(x_k)- \frac{\sigma_k}{2}\|y^*(x_k)\|^2 \ge  \inf_{ x\in {X}} \phi(x)- \frac{\sigma_k}{2}\|y^*(x_k)\|^2.
\end{equation}
If $Y$ is bounded, we have that sequence $\{y^*(x_k)\}$ is bounded. Alternatively, if $X$ is bounded, Lemma \ref{lem_a4}, establishes that $\{y^*(x_k)\}$ is bounded. Under either condition, since $\sigma_k \rightarrow 0$ as $k \rightarrow \infty$:
\[
\lim_{k \rightarrow \infty}\frac{\sigma_k}{2}\|y^*(x_k)\|^2 = 0.
\]
Combining this with \eqref{prop21_eq1} and \eqref{prop21_eq2}:
\[
\inf_{ x\in {X}} \phi(x) \le \liminf_{k \rightarrow \infty}\left(\inf_{ x\in {X}}\phi_{\rho_k, \sigma_k}(x)\right)  + \epsilon.
\]
Since $\epsilon > 0$ was arbitrary, we can let $\epsilon \rightarrow 0$, yielding:
\[
\inf_{ x\in {X}} \phi(x) \le \liminf_{k \rightarrow \infty}\left(\inf_{ x\in {X}}\phi_{\rho_k, \sigma_k}(x)\right) .
\]
Then the conclusion follows by combining the above inequality with Lemma \ref{lem21}.
\end{proof}

\subsection{Lower semi-continuity of $\phi(x)$}\label{lsc_phi}
 In this part, we demonstrate that the inner semi-continuity of the lower-level solution set mapping $\mathcal{S}(x)$ serves as a sufficient condition for the lower semi-continuity of the value function $\phi(x)$.

We begin by recalling the relevant definitions.

 \begin{definition}
      A function $\phi(x): \mathbb{R}^n \rightarrow \mathbb{R}\cup \{\infty\}$ is lower semi-continuous (l.s.c.) at $\bar{x}$ if for any sequence $\{x_k\}$ such that $x_k \rightarrow \bar{x}$ as $k \rightarrow \infty$, it holds that $$\phi(\bar{x}) \le \liminf_{k \rightarrow \infty} \phi(x_k).$$
 \end{definition}

\begin{definition}
    A set-valued function $\mathcal{S}(x): \mathbb{R}^n \rightrightarrows \mathbb{R}^m$ is inner semi-continuous at $\bar{x}$ if $\mathcal{S}(\bar{x}) \subseteq \liminf\limits_{x\to\bar{x}}\mathcal{S}(x)$, where $\liminf\limits_{x\to\bar{x}} \mathcal{S}(\x) :=\{\y\;|\;\forall\x_k\to\bar{\x},\;\exists\y_k\in\mathcal{S}(\x_k),\; \text{s.t.}\;\y_k\to\y\}$.
\end{definition}

\begin{lemma}
    If $\mathcal{S}(x)$ is inner semi-continuous at $\bar{x} \in X$, then $\phi(x)$ is lower semi-continuous at $\bar{x}$.
\end{lemma}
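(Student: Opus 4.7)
The plan is to unravel the definitions and use the fact that, for any $x \in X$, the maximum defining $\phi(x)$ is attained thanks to Assumption \ref{assum1} (strong concavity of $F$ in $y$) and Assumption \ref{assum2} (nonemptiness of $\mathcal{S}(x)$, together with the local boundedness needed for attainment). In particular, there exists $y^*(\bar{x}) \in \mathcal{S}(\bar{x})$ with $\phi(\bar{x}) = F(\bar{x}, y^*(\bar{x}))$.

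Given any sequence $x_k \to \bar{x}$ in $X$, the strategy is to construct a lower approximation of $\phi(x_k)$ that converges to $\phi(\bar{x})$. By the inner semi-continuity of $\mathcal{S}$ at $\bar{x}$, applied to the point $y^*(\bar{x}) \in \mathcal{S}(\bar{x})$, I can select $y_k \in \mathcal{S}(x_k)$ with $y_k \to y^*(\bar{x})$. By the definition of $\phi$ as a maximum over $\mathcal{S}(x_k)$, the inequality $\phi(x_k) \ge F(x_k, y_k)$ holds for every $k$.

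Passing to the liminf and using the continuity of $F$ on $X \times Y$ (from Assumption \ref{assum1}), I obtain
\[
\liminf_{k \to \infty} \phi(x_k) \ge \liminf_{k \to \infty} F(x_k, y_k) = F(\bar{x}, y^*(\bar{x})) = \phi(\bar{x}),
\]
which is exactly the lower semi-continuity of $\phi$ at $\bar{x}$.

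There is no real obstacle here: the argument is a direct combination of (i) attainment of the maximum defining $\phi$, (ii) the inner semi-continuity hypothesis that produces the approximating sequence $y_k \in \mathcal{S}(x_k)$, and (iii) joint continuity of $F$. The only subtlety worth flagging is that the approximating sequence $\{y_k\}$ must be chosen to track a maximizer of $F(\bar{x}, \cdot)$ over $\mathcal{S}(\bar{x})$, rather than an arbitrary element, so that the limit of $F(x_k, y_k)$ coincides with $\phi(\bar{x})$ and not merely with a lower value.
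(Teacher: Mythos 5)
Your proof is correct and follows essentially the same route as the paper: use inner semi-continuity of $\mathcal{S}$ at $\bar{x}$ to produce a sequence $y_k \in \mathcal{S}(x_k)$ converging to a (near-)maximizer, bound $\phi(x_k) \ge F(x_k, y_k)$, and pass to the limit using continuity of $F$. The only cosmetic difference is that the paper works with $\epsilon$-approximate maximizers $y_\epsilon$ (so it never needs attainment of the supremum and handles $\phi(\bar{x}) = -\infty$ separately), whereas you invoke exact attainment of the maximum, which is indeed justified by the strong concavity of $F(\bar{x},\cdot)$ together with the closedness and nonemptiness of $\mathcal{S}(\bar{x})$, exactly as the paper itself argues elsewhere.
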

\begin{proof}
Let $\{x_k\}$ be an arbitrary sequence such that $x_k \rightarrow \bar{x}$ as $k \rightarrow \infty$. If $\phi(\bar{x}) = - \infty$, the inequality hods trivially. Assume $\phi(\bar{x})> -\infty$. For any $\epsilon > 0$, by the definition of supremum, there exists an element $y_\epsilon \in \mathcal{S}(\bar{x})$ such that $$F(\bar{x}, y_\epsilon) \ge \phi(\bar{x}) - \epsilon. $$ 
Since $\mathcal{S}(x)$ is inner semi-continuous at $\bar{x}$, there exists a sequence $\{y_k\}$ such that ${y}_k\in\mathcal{S}(x_k)$ for each $k$, and
 		\begin{align*}
 			\lim\limits_{k\to\infty} y_k = y_\epsilon.
 		\end{align*}  
    Then, by the continuity of $F(x,y)$ and the fact that $F(\x_k,y_k) \le \phi(x_k)$, we have:
    \begin{equation}
        \phi(\bar{x}) - \epsilon \le F(\bar{x}, y_\epsilon) = \lim\limits_{k\to\infty}F(\x_k,y_k) \le \liminf_{k \rightarrow \infty} \phi(x_k).
    \end{equation}
    Since this inequality holds for any arbitrary $\epsilon > 0$, we can let $\epsilon \rightarrow 0$ to conclude:
\[
\phi(\bar{x}) \le \liminf_{k \rightarrow \infty} \phi(x_k).
\]
\end{proof}

\subsection{Proof for Lemma \ref{lem_liminf}}

\begin{proof}[Proof of Lemma \ref{lem_liminf}]
From Lemma \ref{lem_a5}, for each $k$, we have the inequality:
\begin{equation}\label{lem_liminf_eq1}
    \phi_{\rho_k, \sigma_k}(x_k) \ge \phi(x_k)- \frac{\sigma_k}{2}\|y^*(x_k)\|^2,
\end{equation}
where $y^*(x) = \arg\max_{y \in \mathcal{S}(x)}F(x,y)$.
Since the sequence $\{x_k\}$ converges to $\bar{x}$, it is bounded. By Lemma \ref{lem_a4}, the sequence $\{y^*(x_k)\}$ is uniformly bounded. 
Given that $\sigma_k \rightarrow 0$ as $k \rightarrow \infty$ and $\{y^*(x_k)\}$ is bounded, it follows that
\[
\lim_{k \to \infty} \frac{\sigma_k}{2}\|y^*(x_k)\|^2 = 0.
\]
Combing with \eqref{lem_liminf_eq1}, we obtain
\[
\liminf_{k\to\infty }\phi(x_{k}) \le\liminf_{k\to\infty }\phi_{\rho_k, \sigma_k}(x_k).
\]
Thus, by the lower semi-continuity of $\phi(x)$ at $\bar{x}$, we conclude that:
\begin{equation}
    \phi(\bar{x})\le\liminf_{k\to\infty }\phi(x_{k}) \le\liminf_{k\to\infty }\phi_{\rho_k, \sigma_k}(x_k).
\end{equation}
This completes the proof.
\end{proof}
\subsection{Proof for Theorem \ref{lim_yz}}

\begin{proof}[Proof for Theorem \ref{lim_yz}]
Since $\{x_k\}$ is bounded, Lemmas \ref{saddlepointboundedness} and \ref{lem_a4} imply that the sequences $\{y^*(x_k)\}$ and $\{(y_k^*(x_k),z_k^*(x_k))\}$ are also bounded.

First, we show that $\mathcal{S}(x)$ is outer semi-continuous on $X$. 

Let $\{(x_j, y_j)\}$ be an arbitrary sequence such that $x_j \in X$, $y_j \in \mathcal{S}(x_j)$ and $(x_j, y_j) \rightarrow (\tilde{x}, \tilde{y})$ as $j \rightarrow \infty$. Since $y_j \in \mathcal{S}(x_j)$, we have
\[
f(x_j,y_j) \le f(x_j,y^*(\tilde{x})).
\]
Taking the limit as $j \to \infty$ and using the continuity of $f$ on $X \times Y$, we obtain
\[
f(\tilde{x}, \tilde{y}) \le f(\tilde{x},y^*(\tilde{x})) = \min_{y \in Y} f(\tilde{x}, y).
\]
Hence, $\tilde{y} \in \mathcal{S}(\tilde{x})$, which shows that $\mathcal{S}(x)$ is outer semi-continuous on $X$.

Next, we establish that
\[
\lim_{k \to\infty}y^*(x_k)=y^*(\bar{x}).
\]    
Let $\{y^*(x_j)\}$ be any subsequence of $\{y^*(x_k)\}$ such that $y_j^* \rightarrow \bar{y}$ as $j \rightarrow \infty$. By assumption, $x_j \rightarrow \bar{x}$. From the outer semi-continuity of $\mathcal{S}(x)$ established above, it follows that $\bar{y} \in \mathcal{S}(\bar{x})$. Moreover, by definition $\phi(x_j) = F(x_j,y^*(x_j))$ and $\phi(\bar{x}) = F(\bar{x}, y^*(\bar{x}))$. Using the continuity of $F$ on $X \times Y$ and the lower semi-continuity of $\phi$, we obtain
\[
F(\bar{x},\bar{y})=\lim_{j\to\infty}F(x_{j},y^*(x_j))=\lim_{j\to\infty} \phi(x_{j})\ge \phi(\bar{x}) =F(\bar{x},y^*(\bar{x})).
\]
Since $\bar{y} \in \mathcal{S}(\bar{x})$, the above inequality implies that $\bar{y} \in \arg\max_{y\in\mathcal{S}(\bar{x})}F(\bar{x},y) $. Because $\mathcal{S}(\bar{x})$ is convex, and $F(\bar{x},y)$ is strongly concave in $y$, this maximizer is unique, so $\bar{y} = y^*(\bar{x})$. Hence any accumulation point of sequence $\{y^*(x_k)\}$ equals $y^*(\bar{x})$. Since $\{y^*(x_k)\}$ is bounded, we conclude that $\lim_{k \to\infty}y^*(x_k)=y^*(\bar{x})$.

Third, we show that
\[
    \lim_{k \to\infty}y_k^*(x_k)=y^*(\bar{x}).
\]
Let $\{y_j^*(x_j)\}$ be any subsequence of $\{y_k^*(x_k)\}$ such that $y_j^*(x_j) \rightarrow \bar{y}$ as $j \rightarrow \infty$.
By Lemma \ref{valueofy}, we have $\bar{y} \in \mathcal{S}(\bar{x})$.
Since $y_j^*(x_j)$ maximizes $\psi_{j}(x_j, y, z_j^*(x_j))$ over $y \in Y$, it follows that
\[
\psi_{j}(x_j, y_j^*(x_j), z_j^*(x_j)) \ge \psi_{j}(x_j, y^*(x_j), z_j^*(x_j)). 
\]
Expanding both sides gives
\[
\begin{aligned}
    F(x_{j},y_{j}^*(x_{j}))-F(x_{j}, y^*(x_j)) \ge \, &\rho_{j}\left( f(x_{j}, y_{j}^*(x_{j}))-f(x_{j},y^*(x_j))\right) \\
    &+\sigma_{j}\left(\langle y_{j}^*(x_{j}), z_j^*(x_j)\rangle-\langle y^*(x_j), z_j^*(x_j)\rangle \right).
\end{aligned}
\]
Since $y^*(x_j) \in \mathcal{S}(x_j)$, we have $f(x_{j}, y_{j}^*(x_{j}))-f(x_{j},y^*(x_j)) \ge 0$ and thus
\[
 F(x_{j},y_{j}^*(x_{j}))-F(x_{j}, y^*(x_j)) \ge \sigma_{j}\left(\langle y_{j}^*(x_{j}), z_j^*(x_j)\rangle-\langle y^*(x_j), z_j^*(x_j)\rangle \right)
\]
Taking the limit as $j \rightarrow \infty$ and using the continuity of $F$, together with $ \lim_{k \to\infty}y^*(x_k)=y^*(\bar{x})$, yields
\[
F( \bar{x}, \bar{y} ) - F(\bar{x}, y^*(\bar{x})) \ge 0.
\]
Since $\bar{y} \in \mathcal{S}(\bar{x})$, this implies that $\bar{y} \in \arg\max_{y\in\mathcal{S}(\bar{x})}F(\bar{x},y) $. Since $\mathcal{S}(\bar{x})$ is convex, and $F(\bar{x},y)$ is strongly concave in $y$, we must have $\bar{y} = y^*(\bar{x})$.
Therefore, all accumulation points of $\{y_k^*(x_k)\}$ equal $y^*(\bar{x})$. Then, the boundedness of $\{y_k^*(x_k)\}$ implies that $\lim_{k \to\infty}y_k^*(x_k)=y^*(\bar{x})$.

Finally, we show that
\[
    \lim_{k \to\infty} z_k^*(x_k)=y^*(\bar{x}).
\]

Since $z_k^*(x_k)$ minimizes $\psi_{k}(x_k, y_k^*(x_k), z)$ over $z \in Y$, we have
    \[
    \psi_k (x_k,y_k^*(x_k), z_k^*(x_k)) \le \psi_k (x_k,y_k^*(x_k), y^*(x_k)).
    \]
Expanding this inequality gives
\[
\begin{aligned}
     &\rho_k f(x_k, z_k^*(x_k) ) + \frac{\sigma_k}{2}\| z_k^*(x_k) \|^2 - \sigma_k \langle y_k^*(x_k) , z_k^*(x_k) \rangle \\
     \le \, &\rho_k f(x_k, y^*(x_k) )+ \frac{\sigma_k}{2}\| y^*(x_k)\|^2 - \sigma_k \langle y_k^*(x_k) ,y^*(x_k) \rangle .
\end{aligned}
\]
Rearranging terms yields  
\[
    \rho_k \left( f(x_k, z_k^*(x_k) )  - f(x_k, y^*(x_k) ) \right) + \frac{\sigma_k}{2}\|z_k^*(x_k) - y_k^*(x_k)\|^2 \le \frac{\sigma_k}{2}\| y^*(x_k) - y_k^*(x_k) \|^2.
\]
Since $y^*(x_k) \in \mathcal{S}(x_k)$, it follows that $f(x_k, z_k^*(x_k) )  - f(x_k, y^*(x_k) \ge 0$ and hence
\[
\|z_k^*(x_k) - y_k^*(x_k)\|^2 \le \| y^*(x_k) - y_k^*(x_k) \|^2.
\]
Because $\lim_{k \to\infty}y^*(x_k)=y^*(\bar{x}) = \lim_{k \to\infty}y_k^*(x_k)$, we have 
\[
\lim_{k \to\infty} \| y^*(x_k) - y_k^*(x_k) \| = 0.
\]
Taking $k \rightarrow \infty$ in the above inequality yields  
\[
\lim_{k \to\infty}  \|z_k^*(x_k) - y_k^*(x_k)\| =0,
\]
and consequently,
\[
\lim_{k \to\infty}z_k^*(x_k) = \lim_{k \to\infty}y_k^*(x_k) = y^*(\bar{x}).
\]
This completes the proof.

\end{proof}

\section{Proof for Section 4}\label{append:convergence}

Throughout this part, we assume Assumption \ref{assum4}, which states that $X$ is a bounded set. 

Given sequences $\rho_k$ and $\sigma_k$, for notational conciseness, we employ the shorthand notations $\phi_k(x)$, $\psi_k(x,y,z)$, $y^*_k(x)$ and $z^*_k(x)$ to denote $\phi_{\rho_k,\sigma_k}(x)$, $\psi_{\rho_k,\sigma_k}(x,y,z)$, $y^*_{\rho_k,\sigma_k}(x)$ and $z^*_{\rho_k,\sigma_k}(x)$, respectively. We use $u$ to denote the pair $u:=(y,z)$, and correspondingly, $u^k:=(y^k,z^k)$ and $u^*_k(x) = (y^*_k(x), z^*_k(x))$.
The symbols  $\mathcal{N}_X(x)$, $\mathcal{N}_Y(y)$ and $\mathcal{N}_{Y \times Y}(x,y)$ denote the normal cones to the sets $X$, $Y$ and $Y \times Y$ at $x$, $y$ and $(x,y)$, respectively.

Let $L_F$ and $L_f$ denote the Lipschitz constants of $\nabla F(x,y)$ and $\nabla f(x,y)$ on $X \times Y$, respectively.

Consider the sequences $\{\rho_k\}$ and $\{\sigma_k\}$ such that $\rho_k \rightarrow \infty$ and $\sigma_k \rightarrow 0$ as $k \rightarrow \infty$. As established in Lemma \ref{saddlepointboundedness}, the quantity $M_y:= \sup_{k,x\in X} \max\{ \|y^*_{k}(x)\|,\|z^*_{k}(x)\| \}$ is finite. This implies that the collections of points $\{y^*_{k}(x)\}$ and $\{z^*_{k}(x)\}$ are bounded. Given that $X$ is bounded, and $f$ and its gradient $\nabla f$ are assumed to be continuous on $X \times Y$, the continuity over this effectively bounded domain of evaluation ensures that the suprema $M_{ f}:= \sup_{k,x\in X} \max\{ | f(x, y^*_{k}(x))|, |f(x, z^*_{k}(x))| \}$ and $M_{\nabla f}:= \sup_{k,x\in X} \max\{ \|\nabla f(x, y^*_{k}(x))\|,\| \nabla f(x, z^*_{k}(x))\| \}$ are also finite.

With given sequences $\{\rho_k\}$ and $\{\sigma_k\}$, for each $k$, we define the operator $T_k: \IR^{n+2m} \to\IR^{2m}$ as
\[
T_k(x,y,z) := \left( -\nabla_y \psi_k(x,y,z), \nabla_z \psi_k(x,y,z) \right).
\]
By assumption, for any fixed $x \in X$, the function $\psi_k(x,y,z)$ is $\sigma_k$-strongly convex in $z$ and $\mu$-strongly concave in $y$. Consequently, invoking \cite[Theorem 12.17 and Exercise 12.59]{rockafellar2009variational},  it follows that for a fixed $x \in X$, the operator $T_k(x,\cdot,\cdot)$ exhibits strong monotonicity with respect to $(y,z)$:
\begin{equation}\label{T_mono}
    \langle T_k(x,u) - T_k(x,u'), u- u' \rangle \ge \mu\|y-y'\|^2 + \sigma_k\|z-z'\|^2, \quad \forall u, u' \in Y \times Y.
\end{equation}
Furthermore, under the assumption that the gradients of $F$ and $f$ are Lipschitz continuous, the operator $T_k(x,\cdot,\cdot)$ is also Lipschitz continuous with respect to $(y,z)$ for any fixed $x \in X$:
\begin{equation}\label{T_Lip}
    \| T_k(x,u) - T_k(x,u') \| \le \max\left\{\left(L_F + \rho_k L_f + \sigma_k \right), \left( \rho_k L_f + 2\sigma_k \right) \right\} \| u - u'\|, \quad \forall u, u' \in Y \times Y.
\end{equation}

\subsection{Auxiliary Lemmas}

To establish the convergence properties of SiPBA(Algorithm \ref{algorithm}), we first introduce several auxiliary lemmas pertaining to the behavior of the iterative sequence. The following lemma demonstrates a contraction property for the sequence $\u_k:=(\y_k,\z_k)$.
     
    \begin{lemma}\label{udescent}
    Let $\{\rho_k\}$ and $\{\sigma_k\}$ be sequences such that  $ \rho_{k}, \sigma_{k} >0$. Define $\bar{\sigma}_k = \min\{\sigma_k, \mu\}$. Suppose the step-size sequence $\{\beta_k\}$ satisfies $0<\beta_k<\frac{\bar{\sigma}_k}{(L_F + \rho_k L_f + 2\sigma_k)^2}$ for each $k$. Let $\{(x^k, y^k, z^k)\}$ be the sequence generated by SiPBA(Algorithm \ref{algorithm}). Then, the iterate $u^k$ and $u^{k+1}$ satisfy:
	 	\begin{equation}\label{udescentequ}
	 		\|u^{k+1}- u_k^*(x^k)\|^2\le(1-\bar{\sigma}_k \beta_k)\|u^{k}-u_k^*(x^k)\|^2.
	 	\end{equation}
    \end{lemma}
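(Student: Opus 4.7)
The plan is to treat the joint ascent--descent update as a single projected step driven by the operator $T_k(x^k,\cdot)$ introduced before the lemma, and then invoke the classical contraction argument for projected iterations on strongly monotone and Lipschitz continuous operators. The key ingredients will be the strong monotonicity estimate \eqref{T_mono}, the Lipschitz bound \eqref{T_Lip}, nonexpansiveness of the Euclidean projection onto the convex set $Y \times Y$, and the fixed-point characterization of the saddle point $u^*_k(x^k)$.

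First I would rewrite the update in compact form. Inspecting the definitions, one sees $d_y^k = \nabla_y \psi_k(x^k,u^k)$ and $d_z^k = \nabla_z \psi_k(x^k,u^k)$, so the two projected steps in SiPBA combine into
\[
u^{k+1} = \mathrm{Proj}_{Y \times Y}\bigl(u^k - \beta_k T_k(x^k,u^k)\bigr).
\]
Since $u^*_k(x^k)$ is the unique saddle point of $\psi_k(x^k,\cdot,\cdot)$ on $Y \times Y$, it satisfies the variational inequality $\langle T_k(x^k,u^*_k(x^k)),\, u - u^*_k(x^k)\rangle \ge 0$ for all $u \in Y \times Y$, which is equivalent to $u^*_k(x^k) = \mathrm{Proj}_{Y \times Y}(u^*_k(x^k) - \beta_k T_k(x^k,u^*_k(x^k)))$ for any $\beta_k > 0$.

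Next I would subtract the two fixed-point representations and apply nonexpansiveness of the projection onto the closed convex set $Y \times Y$, giving
\[
\|u^{k+1} - u^*_k(x^k)\|^2 \le \bigl\|u^k - u^*_k(x^k) - \beta_k\bigl(T_k(x^k,u^k) - T_k(x^k,u^*_k(x^k))\bigr)\bigr\|^2.
\]
Expanding the right-hand side produces a cross term and a quadratic term. The cross term is controlled from below by $2\beta_k \bar{\sigma}_k \|u^k - u^*_k(x^k)\|^2$ via \eqref{T_mono}, once I bound $\mu\|y^k - y_k^*(x^k)\|^2 + \sigma_k\|z^k - z_k^*(x^k)\|^2$ by $\bar{\sigma}_k \|u^k - u^*_k(x^k)\|^2$ using $\bar{\sigma}_k = \min\{\sigma_k,\mu\}$. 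The quadratic term is controlled from above by $\beta_k^2 (L_F + \rho_k L_f + 2\sigma_k)^2 \|u^k - u^*_k(x^k)\|^2$ using \eqref{T_Lip} together with the trivial upper bound $\max\{L_F + \rho_k L_f + \sigma_k,\ \rho_k L_f + 2\sigma_k\} \le L_F + \rho_k L_f + 2\sigma_k$. Collecting the two bounds yields
\[
\|u^{k+1} - u^*_k(x^k)\|^2 \le \bigl(1 - 2\beta_k \bar{\sigma}_k + \beta_k^2 (L_F + \rho_k L_f + 2\sigma_k)^2\bigr)\|u^k - u^*_k(x^k)\|^2,
\]
and then the step-size hypothesis $\beta_k < \bar{\sigma}_k/(L_F + \rho_k L_f + 2\sigma_k)^2$ forces $\beta_k^2 (L_F + \rho_k L_f + 2\sigma_k)^2 \le \beta_k \bar{\sigma}_k$, leaving precisely the stated factor $(1 - \beta_k \bar{\sigma}_k)$.

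The main delicate point is not any single estimate but the bookkeeping that reconciles the asymmetric constants across the two coordinates: the strong monotonicity in \eqref{T_mono} uses separate moduli $\mu$ and $\sigma_k$ for the $y$ and $z$ blocks, so the passage to a single quadratic form in $\|u^k - u^*_k(x^k)\|$ must go through the uniform lower bound $\bar{\sigma}_k = \min\{\sigma_k,\mu\}$; likewise the Lipschitz estimate \eqref{T_Lip} has a $\max$ of two distinct expressions that must be absorbed into the single upper bound matched by the step-size hypothesis. Once these two harmonizations are made, everything else reduces to the familiar strongly monotone projected-operator contraction.
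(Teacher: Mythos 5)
Your proposal is correct and follows essentially the same route as the paper: both rewrite the two updates as a single projected step $u^{k+1}=\mathrm{Proj}_{Y\times Y}(u^k-\beta_k T_k(x^k,u^k))$, use the fixed-point characterization of the saddle point, and obtain the contraction factor $1-2\beta_k\bar{\sigma}_k+\beta_k^2(L_F+\rho_k L_f+2\sigma_k)^2$ from strong monotonicity and Lipschitz continuity before applying the step-size bound. The only difference is that you expand the squared norm explicitly where the paper cites the standard projected fixed-point result of Facchinei--Pang, which is an immaterial distinction.
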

    \begin{proof}
    The update rule for $u^{k+1}$ can be expressed in the compact form:
    \[
    u^{k+1} = \mathrm{Proj}_{Y \times Y} \left( u^k - \beta_k T(x^k, u^k) \right).
    \]
Recall that  $u^*_k(x^k) = (y_{k}^*(x^k), z_{k}^*(x^k))$ is is the unique saddle point of the minimax problem $\min_{z \in {Y}} \max_{ y \in {Y}}\psi_{k}(x^k, y, z)$. From the first-order optimality conditions, $u^*_k(x^k)$ satisfies:
\[
    0 \in T_k(x^k,u^*_k(x^k)) + \mathcal{N}_{Y \times Y}(u^*_k(x^k)),
\]
which implies
\[
    u^*_k(x^k) = \mathrm{Proj}_{Y\times Y} \left( u^*_k(x^k) - \beta_k T(x^k, u^*_k(x^k)) \right).
\]
Utilizing the non-expansiveness of the projection operator, the strongly monotonicity of $T_k$ in \eqref{T_mono} and its Lipschitz continuity with respect to $u$ in \eqref{T_Lip}, we can apply standard results from the analysis of projected fixed-point iterations \cite[Theorem 12.1.2]{facchinei2003finite}. If the step size $\beta_k \in (0, 2\min\{\sigma_k, \mu\}/\left(L_F + \rho_k L_f + 2\sigma_k \right)^2 )$, then:
\[
    \|u^{k+1}- u^*_k(x^k) \|^2 \le (1+ (L_F + \rho_k L_f + 2\sigma_k)^2\beta_k^2 - 2\beta_k \min\{\sigma_k, \mu\} ) \|u^{k}- u^*_k(x^k) \|^2.
\]
Thus, when $0 < \beta_k < \frac{\min\{\sigma_k, \mu\}}{(L_F + \rho_k L_f + 2\sigma_k)^2 }$, it holds that
\[
    \|u^{k+1}- u^*_k(x^k) \|^2 \le (1 - \beta_k \min\{\sigma_k, \mu\} ) \|u^{k}- u^*_k(x^k) \|^2.
\]
    \end{proof}

     The subsequent lemma is dedicated to establishing the Lipschitz continuity of $u_{k}^*(x)$.
	 \begin{lemma}\label{uxk+1xk}
	 	Let $\{\rho_k\}$ and $\{\sigma_k\}$ be sequences such that $ \rho_{k}, \sigma_{k} >0$. Define $\bar{\sigma}_k = \min\{\sigma_k, \mu\}$. Then, for any $x, x'\in X$, the corresponding saddle points $u^*_k(x)$ and $u^*_k(x')$ satisfy:
	 	\begin{align}\label{uxk+1xkequ}
	 		\|u^*_k(x')-u^*_k(x)\|\le\frac{L_{F}+2\rho_kL_{f}}{\bar{\sigma}_k}\|x'-x\|.
	 	\end{align}
        \end{lemma}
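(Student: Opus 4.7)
My plan is to treat the saddle point $u_k^*(x)$ as the solution to a variational inequality associated with the operator $T_k(x,\cdot)$, and then combine the strong monotonicity of $T_k$ in $u$ (stated in \eqref{T_mono}) with the Lipschitz behavior of $T_k$ in $x$. This is the standard route for proving parametric Lipschitz continuity of solutions to strongly monotone VIs, and the conclusion matches the factor $(L_F+2\rho_k L_f)/\bar\sigma_k$ appearing in \eqref{uxk+1xkequ}.

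Concretely, by the first-order optimality conditions for the saddle point, $u_k^*(x)$ and $u_k^*(x')$ satisfy
\begin{equation*}
\langle T_k(x,u_k^*(x)),\,u-u_k^*(x)\rangle\ge 0,\qquad \langle T_k(x',u_k^*(x')),\,u-u_k^*(x')\rangle\ge 0,\qquad \forall u\in Y\times Y.
\end{equation*}
I will plug $u=u_k^*(x')$ into the first and $u=u_k^*(x)$ into the second, add them, and insert the telescoping term $\pm T_k(x,u_k^*(x'))$ to obtain
\begin{equation*}
\langle T_k(x,u_k^*(x))-T_k(x,u_k^*(x')),\,u_k^*(x')-u_k^*(x)\rangle\;\ge\;\langle T_k(x',u_k^*(x'))-T_k(x,u_k^*(x')),\,u_k^*(x')-u_k^*(x)\rangle.
\end{equation*}
Applying strong monotonicity \eqref{T_mono} (noting $\mu\|y-y'\|^2+\sigma_k\|z-z'\|^2\ge\bar\sigma_k\|u-u'\|^2$) to the left-hand side, followed by Cauchy--Schwarz on the right, yields
\begin{equation*}
\bar\sigma_k\|u_k^*(x)-u_k^*(x')\|\;\le\;\|T_k(x,u_k^*(x'))-T_k(x',u_k^*(x'))\|.
\end{equation*}

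The remaining step is to bound the Lipschitz modulus of $T_k$ with respect to $x$. From the explicit form
\begin{equation*}
T_k(x,y,z)=\bigl(-\nabla_y F(x,y)+\rho_k\nabla_y f(x,y)+\sigma_k z,\;\;\rho_k\nabla_y f(x,z)+\sigma_k(z-y)\bigr),
\end{equation*}
the $\sigma_k$ terms do not depend on $x$, so only the $F$ and $f$ gradients contribute. Using the Lipschitz constants $L_F$ and $L_f$ componentwise gives
\begin{equation*}
\|T_k(x,u)-T_k(x',u)\|\le\bigl((L_F+\rho_k L_f)+\rho_k L_f\bigr)\|x-x'\|=(L_F+2\rho_k L_f)\|x-x'\|,
\end{equation*}
and combining with the previous bound completes the proof. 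The only subtle point is pinning down the coefficient $2\rho_k L_f$: the $y$-block of $T_k$ contributes $L_F+\rho_k L_f$ while the $z$-block contributes another $\rho_k L_f$ via $\nabla_y f(x,z)$; handling the two blocks via the crude inequality $\sqrt{a^2+b^2}\le a+b$ recovers exactly the constant in the statement. There is no genuine obstacle; the argument is a textbook perturbation estimate for strongly monotone VIs, and all ingredients (strong monotonicity, VI optimality, Lipschitz continuity of $\nabla F,\nabla f$) are already in hand.
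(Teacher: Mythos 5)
Your proposal is correct and follows essentially the same route as the paper's proof: both characterize $u_k^*(x)$ and $u_k^*(x')$ via the first-order optimality conditions (normal-cone inclusions, equivalently the variational inequalities you wrote), apply the strong monotonicity \eqref{T_mono} of $T_k(x,\cdot)$ together with Cauchy--Schwarz to reduce the problem to bounding $\|T_k(x,u_k^*(x'))-T_k(x',u_k^*(x'))\|$, and then obtain the constant $L_F+2\rho_k L_f$ from the same componentwise Lipschitz estimate. The only cosmetic difference is that the paper phrases the VI step through monotonicity of the normal cone rather than adding the two variational inequalities, and your intermediate display needs its inequality directions tracked carefully (strong monotonicity bounds your left-hand side from above by $-\bar{\sigma}_k\|u_k^*(x)-u_k^*(x')\|^2$), but the final estimate is exactly the paper's.
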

	 \begin{proof}
Because $u^*_k(x) = (y_{k}^*(x), z_{k}^*(x))$ and $u^*_k(x') = (y_{k}^*(x'), z_{k}^*(x'))$ are saddle points to the minimax problem $\min_{z \in {Y}} \max_{ y \in {Y}}\psi_{k}(x, y, z)$, and $\min_{z \in {Y}} \max_{ y \in {Y}}\psi_{k}(x', y, z)$, respectively. According to the first-order optimality conditions, these saddle points must satisfy:
\begin{equation}\label{lem_b2_eq1}
    0 \in T_k(x,u^*_k(x)) + \mathcal{N}_{Y \times Y}(u^*_k(x)),
\end{equation}
and 
\[
0 \in T_{k}(x',u^*_{k}(x')) + \mathcal{N}_{Y \times Y}(u^*_{k}(x')).
\]
Next, we analyze the Lipschitz continuity of $ T_{k}(x,u)$ with respect to $x$.
The first component difference is
\[
\begin{aligned}
    &-\nabla_y \psi_{k}(x',u^*_{k}(x')) + \nabla_y \psi_{k}(x,u^*_{k}(x')) \\
    =\, & - \nabla_y F(x',y_{k}^*(x')) + \nabla_y F(x,y_{k}^*(x')) + \rho_{k} \left( \nabla_y f(x',y_{k}^*(x')) - \nabla_y f(x,y_{k}^*(x')) \right).
\end{aligned}
\]
And the second component difference is
\[
    \nabla_z \psi_{k}(x', u^*_{k}(x')) - \nabla_z \psi_{k}(x,u^*_{k}(x')) =  \rho_k \left( \nabla_y f(x',z_{k}^*(x')) - \nabla_y f(x,z_{k}^*(x')) \right).
\]
Thus, we obtain
\begin{equation}\label{lem_b2_eq2}
\|T_{k}(x' ,u^*_{k}(x')) - T_{k}(x,u^*_{k}(x')) \| \le L_F\| x' - x\| + 2 \rho_k L_f\|x'-x\|.
\end{equation}
Next, we use the fact that
\[
 T_{k}(x,u^*_{k}(x')) - T_{k}(x',u^*_{k}(x')) \in T_{k}(x,u^*_{k}(x')) + \mathcal{N}_{Y \times Y}(u^*_{k}(x')).
\]
and apply the strongly monotonicity of $T_k$ from \eqref{T_mono}, along with the monotonicity of the normal cone $\mathcal{N}_{Y \times Y}$ and  \eqref{lem_b3_eq1}. This leads to the following inequality:
\[
\begin{aligned}
     &\mu\| y_{k}^*(x') - y_{k}^*(x)\|^2 + \sigma_k\|z_{k}^*(x') - z_{k}^*(x)\|^2 \\ \le\,& \langle T_{k}(x,u^*_{k}(x')) - T_{k}(x',u^*_{k}(x')), u^*_{k}(x') - u^*_{k}(x) \rangle \\
     \le \, &\|T_{k}(x,u^*_{k}(x')) - T_{k}(x',u^*_{k}(x')) \| \|   u^*_{k}(x') - u^*_{k}(x)\|.
\end{aligned}
\]
By substituting the bound from \eqref{lem_b2_eq2} into the above inequality, we obtain
\[
     \min\{\sigma_k, \mu\} \| u_{k}^*(x') - u_{k}^*(x)\| \le L_F\| x' - x\| + 2 \rho_k L_f\|x'-x\|.
\]
This completes the proof.

	 \end{proof}

\begin{lemma}\label{uk+1uk}
	 	Let $\{\rho_k\}$ and $\{\sigma_k\}$ be sequences such that $\rho_{k+1} \ge \rho_{k}>0$, $\sigma_{k} \ge\sigma_{k+1}>0$. Define $\bar{\sigma}_k = \min\{\sigma_k, \mu\}$. Then, for any fixed $x \in X$, we have 
	 	\begin{equation}\label{uk+1ukequ}
	 		      \| u_{k+1}^*(x) - u_{k}^*(x)\| \le \frac{2 (\rho_{k+1} - \rho_k)}{\bar{\sigma}_k}M_{\nabla f} + \frac{3(\sigma_{k} - \sigma_{k+1})}{\bar{\sigma}_k}M_y.
	 	\end{equation}
	 \end{lemma}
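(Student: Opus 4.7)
The plan is to mimic the argument of Lemma \ref{uxk+1xk}, but now treating $(\rho,\sigma)$ as the varying parameters instead of $x$. For the fixed $x$, the two saddle points are characterized by the first-order conditions
\[
-T_k(x,u_k^*(x))\in\mathcal{N}_{Y\times Y}(u_k^*(x)),\qquad -T_{k+1}(x,u_{k+1}^*(x))\in\mathcal{N}_{Y\times Y}(u_{k+1}^*(x)).
\]
Using the monotonicity of the normal cone $\mathcal{N}_{Y\times Y}$ applied to these two inclusions yields the key inequality
\[
\langle T_k(x,u_k^*(x))-T_{k+1}(x,u_{k+1}^*(x)),\,u_k^*(x)-u_{k+1}^*(x)\rangle\le 0.
\]

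Next I would insert the term $T_k(x,u_{k+1}^*(x))$ by adding and subtracting it, so the inner product splits into a piece governed by the strong monotonicity of the fixed operator $T_k(x,\cdot)$ (with constant $\bar{\sigma}_k$, by \eqref{T_mono}) and a perturbation piece comparing $T_k$ to $T_{k+1}$ at the common point $u_{k+1}^*(x)$. Applying \eqref{T_mono} to the first piece and Cauchy--Schwarz to the second gives
\[
\bar{\sigma}_k\|u_k^*(x)-u_{k+1}^*(x)\|^2\le \|T_{k+1}(x,u_{k+1}^*(x))-T_k(x,u_{k+1}^*(x))\|\cdot \|u_k^*(x)-u_{k+1}^*(x)\|,
\]
so that after dividing,
\[
\|u_{k+1}^*(x)-u_k^*(x)\|\le \frac{1}{\bar{\sigma}_k}\|T_{k+1}(x,u_{k+1}^*(x))-T_k(x,u_{k+1}^*(x))\|.
\]

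The final step is to compute the parameter-perturbation of $T$. Using the explicit forms $\nabla_y\psi_k=\nabla_yF(x,y)-\rho_k\nabla_yf(x,y)-\sigma_k z$ and $\nabla_z\psi_k=\rho_k\nabla_yf(x,z)+\sigma_k(z-y)$, the two components of $T_{k+1}-T_k$ at $u_{k+1}^*(x)=(y_{k+1}^*(x),z_{k+1}^*(x))$ evaluate to
\[
(\rho_{k+1}-\rho_k)\nabla_yf(x,y_{k+1}^*(x))+(\sigma_{k+1}-\sigma_k)z_{k+1}^*(x)
\]
and
\[
(\rho_{k+1}-\rho_k)\nabla_yf(x,z_{k+1}^*(x))+(\sigma_{k+1}-\sigma_k)(z_{k+1}^*(x)-y_{k+1}^*(x)).
\]
Using $\|(a,b)\|\le\|a\|+\|b\|$, the monotonicity assumptions $\rho_{k+1}\ge\rho_k$ and $\sigma_k\ge\sigma_{k+1}$, and the uniform bounds $\|y_k^*(x)\|,\|z_k^*(x)\|\le M_y$ and $\|\nabla_yf(x,y_k^*(x))\|,\|\nabla_yf(x,z_k^*(x))\|\le M_{\nabla f}$ (established above via Lemma \ref{saddlepointboundedness} and the continuity of $\nabla f$ on $X\times Y$), the two components are bounded respectively by $(\rho_{k+1}-\rho_k)M_{\nabla f}+(\sigma_k-\sigma_{k+1})M_y$ and $(\rho_{k+1}-\rho_k)M_{\nabla f}+2(\sigma_k-\sigma_{k+1})M_y$. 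Summing and substituting yields exactly \eqref{uk+1ukequ}.

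The only delicate point is the choice of which of the two natural splittings to use: inserting $T_{k+1}(x,u_k^*(x))$ instead would give the same bound but with $\bar{\sigma}_{k+1}$ in place of $\bar{\sigma}_k$, which is the weaker statement since $\sigma_k\ge\sigma_{k+1}$ implies $\bar{\sigma}_k\ge\bar{\sigma}_{k+1}$. Choosing to insert $T_k(x,u_{k+1}^*(x))$ therefore lets the strong monotonicity of $T_k$ (the operator with the larger parameter $\bar{\sigma}_k$) do the work and produces the constant stated in the lemma.
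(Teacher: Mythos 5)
Your proposal is correct and follows essentially the same route as the paper's proof: both use the two variational inequalities for $u_k^*(x)$ and $u_{k+1}^*(x)$, insert $T_k(x,u_{k+1}^*(x))$ so that the strong monotonicity of $T_k(x,\cdot)$ with modulus $\bar{\sigma}_k$ absorbs one piece, and bound the parameter-perturbation $T_{k+1}(x,u_{k+1}^*(x))-T_k(x,u_{k+1}^*(x))$ componentwise by $2(\rho_{k+1}-\rho_k)M_{\nabla f}+3(\sigma_k-\sigma_{k+1})M_y$. Your closing remark about which splitting point to choose matches the choice made in the paper.
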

	 \begin{proof}
Because $u_{k}^*(x) = (y_{k}^*(x), z_{k}^*(x))$ and $u_{k+1}^*(x) = (y_{k+1}^*(x), z_{k+1}^*(x))$ are saddle points to the minimax problem $\min_{z \in {Y}} \max_{ y \in {Y}}\psi_{k}(x, y, z)$, and $\min_{z \in {Y}} \max_{ y \in {Y}}\psi_{k+1}(x, y, z)$, respectively. According to the first-order optimality conditions, these saddle points satisfy:
\begin{equation}\label{lem_b3_eq1}
    0 \in T_k(x,u^*_k(x)) + \mathcal{N}_{Y \times Y}(u^*_k(x)),
\end{equation}
and 
\[
0 \in T_{k+1}(x,u^*_{k+1}(x)) + \mathcal{N}_{Y \times Y}(u^*_{k+1}(x)).
\]
Next, we expand the differences between the gradients of $\psi_{k}$ and $\psi_{k+1}$ at $u^*_{k+1}(x)$:
\[
\begin{aligned}
    &-\nabla_y \psi_{k+1}(x,u^*_{k+1}(x)) + \nabla_y \psi_{k}(x,u^*_{k+1}(x)) \\
    =\, & (\rho_{k+1} - \rho_k)\nabla_y f(x,y_{k+1}^*(x)) + (\sigma_{k+1} - \sigma_k)z_{k+1}^*(x),
\end{aligned}
\]
and
\[
\begin{aligned}
    &\nabla_z \psi_{k+1}(x,u^*_{k+1}(x)) - \nabla_z \psi_{k}(x,u^*_{k+1}(x)) \\
    = \,&(\rho_{k+1} - \rho_k)\nabla_y f(x,z_{k+1}^*(x)) + (\sigma_{k+1} - \sigma_k) \left( z_{k+1}^*(x) - y_{k+1}^*(x) \right).
\end{aligned}
\]
Thus, we have the following bound for the difference between the operators $T_{k}$ and $T_{k+1}$:
\begin{equation}\label{lem_b3_eq2}
\|T_{k+1}(x,u^*_{k+1}(x)) - T_{k}(x,u^*_{k+1}(x)) \| \le 2 (\rho_{k+1} - \rho_k)M_{\nabla f} + 3(\sigma_{k} - \sigma_{k+1})M_y.
\end{equation}
Now, using the fact that
\[
 T_{k}(x,u^*_{k+1}(x)) - T_{k+1}(x,u^*_{k+1}(x)) \in T_{k}(x,u^*_{k+1}(x)) + \mathcal{N}_{Y \times Y}(u^*_{k+1}(x)),
\]
and combining this with the strongly monotonicity of $T_k$ from \eqref{T_mono}, the monotonicity of the normal cone 
$\mathcal{N}_{Y \times Y}$ and  \eqref{lem_b3_eq1}, we get
\[
\begin{aligned}
     &\mu\| y_{k+1}^*(x) - y_{k}^*(x)\|^2 + \sigma_k\|z_{k+1}^*(x) - z_{k}^*(x)\|^2 \\ \le\,& \langle T_{k+1}(x,u^*_{k+1}(x)) - T_{k}(x,u^*_{k+1}(x)), u^*_{k}(x) - u^*_{k+1}(x) \rangle \\
     \le \, &\|T_{k+1}(x,u^*_{k+1}(x)) - T_{k}(x,u^*_{k+1}(x)) \| \|   u^*_{k+1}(x) - u^*_{k}(x)\|.
\end{aligned}
\]
By substituting the bound from \eqref{lem_b3_eq2} into this inequality, we obtain
\[
     \min\{\sigma_k, \mu\} \| u_{k+1}^*(x) - u_{k}^*(x)\| \le 2 (\rho_{k+1} - \rho_k)M_{\nabla f} + 3(\sigma_{k} - \sigma_{k+1})M_y.
\]
This completes the proof.
	 \end{proof}

       \begin{lemma}\label{Lipshitzofphi}
       Let $\{\rho_k\}$ and $\{\sigma_k\}$ be sequences such that  $ \rho_{k}, \sigma_{k} >0$. Define $\bar{\sigma}_k = \min\{\sigma_k, \mu\}$. Then, for any $x, x'\in X$, we have  
	 	\begin{align}
	 		\|\nabla \phi_k(\x^\prime)-\nabla \phi_k(\x)\|\le L_{\phi_k}\|\x^\prime-\x\|,
	 	\end{align}
	 	where $L_{\phi_k}:= \frac{(L_F + 2\rho_kL_f)(L_{F}+2\rho_kL_{f} + \bar{\sigma}_k)}{\bar{\sigma}_k} $.
	 \end{lemma}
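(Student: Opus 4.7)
The plan is to start from the explicit gradient formula given in Theorem \ref{differentiable}, namely
\[
\nabla \phi_k(x) = \nabla_x F(x, y_k^*(x)) - \rho_k \nabla_x f(x, y_k^*(x)) + \rho_k \nabla_x f(x, z_k^*(x)),
\]
and control the difference $\nabla \phi_k(x') - \nabla \phi_k(x)$ term by term by combining the Lipschitz continuity of $\nabla F$ and $\nabla f$ on $X \times Y$ with the Lipschitz estimate for the saddle-point map $u_k^*(\cdot)$ already established in Lemma \ref{uxk+1xk}.

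First I would write, for each of the three terms, a telescoping inequality of the form
\[
\|\nabla_x F(x', y_k^*(x')) - \nabla_x F(x, y_k^*(x))\| \le L_F \bigl(\|x' - x\| + \|y_k^*(x') - y_k^*(x)\|\bigr),
\]
and analogously for the two $\nabla_x f$ terms with $L_f$. Summing these with the appropriate $\rho_k$ weights gives
\[
\|\nabla \phi_k(x') - \nabla \phi_k(x)\| \le (L_F + 2\rho_k L_f)\|x' - x\| + (L_F + 2\rho_k L_f)\,\|u_k^*(x') - u_k^*(x)\|,
\]
where I used $\|y_k^*(x') - y_k^*(x)\|,\ \|z_k^*(x') - z_k^*(x)\| \le \|u_k^*(x') - u_k^*(x)\|$.

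Next I would invoke Lemma \ref{uxk+1xk} to replace $\|u_k^*(x') - u_k^*(x)\|$ by $\tfrac{L_F + 2\rho_k L_f}{\bar{\sigma}_k}\|x' - x\|$, yielding
\[
\|\nabla \phi_k(x') - \nabla \phi_k(x)\| \le (L_F + 2\rho_k L_f)\Bigl(1 + \tfrac{L_F + 2\rho_k L_f}{\bar{\sigma}_k}\Bigr)\|x' - x\| = \frac{(L_F + 2\rho_k L_f)(L_F + 2\rho_k L_f + \bar{\sigma}_k)}{\bar{\sigma}_k}\|x' - x\|,
\]
which is exactly the claimed bound with $L_{\phi_k}$ as defined.

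There is no serious obstacle here; the argument is essentially a direct composition of known Lipschitz estimates. The only mildly delicate point is ensuring that both $y_k^*(x)$ and $z_k^*(x)$ lie in $Y$ for all $x \in X$ (so that the Lipschitz constants $L_F$ and $L_f$, stated on $X \times Y$, may be applied along the line segments appearing in the splittings), which is immediate from the definitions of the saddle points. Everything else is bookkeeping.
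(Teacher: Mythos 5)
Your proposal is correct and follows essentially the same route as the paper's proof: decompose the gradient difference from Theorem \ref{differentiable} using the Lipschitz constants $L_F$ and $L_f$, then invoke Lemma \ref{uxk+1xk} to bound $\|u_k^*(x')-u_k^*(x)\|$, which yields exactly the stated constant $L_{\phi_k}$. No gaps.
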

	 \begin{proof}
     From the expression for $\nabla\phi_{k}(x)$ given in Theorem \ref{differentiable}, we have the following:
     \begin{equation}
         \begin{aligned}
             \| \nabla\phi_{k}(x) - \nabla\phi_{k}(x')\| &= \| \nabla_x \psi_{k}(x, u_k^*(x)) - \nabla_x\psi_{k}(x', u_k^*(x'))\| \\
             & \le \| \nabla_x F(x,y_k^*(x)) -  \nabla_x F(x',y_k^*(x'))\|\\&\quad  + \rho_k \| \nabla_x f(x,y_k^*(x)) -  \nabla_x f(x',y_k^*(x')) \|\\
             & \quad + \rho_k \| \nabla_x f(x,y_k^*(x)) -  \nabla_x f(x',y_k^*(x')) \| \\
             & \le (L_F + \rho_kL_f)(\|x - x'\| + \|y_k^*(x) - y_k^*(x')\|) \\
             &\quad + \rho_kL_f(\|x - x'\| + \|z_k^*(x) - z_k^*(x')\|) \\
             & \le (L_F + 2\rho_kL_f)(\|x - x'\| + \|u_k^*(x) - u_k^*(x')\|) \\
             & \le \frac{(L_F + 2\rho_kL_f)(L_{F}+2\rho_kL_{f} + \bar{\sigma}_k)}{\bar{\sigma}_k} \|x - x'\|
         \end{aligned}
     \end{equation}
     where the final inequality follows from Lemma \ref{uxk+1xk}.
	
	 \end{proof}

      By synthesizing the results from Lemmas \ref{udescent}-\ref{Lipshitzofphi}, the following lemma characterizes the evolution of the squared norm of the tracking error, $\|u^{k}-u_{k}^*(x^{k})\|^2$.

	 \begin{lemma}\label{udescentlemma}
Let $\{\rho_k\}$ and $\{\sigma_k\}$ be sequences such that  $\rho_{k+1} \ge \rho_{k}>0$, $\sigma_{k} \ge\sigma_{k+1}>0$. Define $\bar{\sigma}_k = \min\{\sigma_k, \mu\}$. Suppose the step-size sequence $\{\beta_k\}$ satisfies $0<\beta_k<\frac{\bar{\sigma}_k}{(L_F + \rho_k L_f + 2\sigma_k)^2}$ for each $k$. Let $\{(x^k, y^k, z^k)\}$ be the sequence generated by SiPBA (Algorithm \ref{algorithm}).
Then, the following inequality holds:
\begin{equation}\label{udescentlemmaeq}
\begin{aligned}
        & \|u^{k+1}- u_{k+1}^*(x^{k+1})\|^2-\|u^{k}-u_{k}^*(x^{k})\|^2 \\\le&-\frac{1}{2}\beta_k\bar{\sigma}_k \|u^{k}-u_{k}^*(x^{k})\|^2 + 2(1+\frac{2}{\beta_k\bar{\sigma}_k})\frac{(L_{F}+2\rho_kL_{f})^2}{\bar{\sigma}_k^2}\|x^{k+1}-x^k\|^2 \\
        &+2(1+\frac{2}{\beta_k\bar{\sigma}_k}) \left( \frac{8(\rho_{k+1} - \rho_k)^2}{\bar{\sigma}_k^2}M_{\nabla f}^2 + \frac{18(\sigma_{k} - \sigma_{k+1})^2}{\bar{\sigma}_k^2}M_y^2 \right).
        \end{aligned}
    \end{equation}
    \end{lemma}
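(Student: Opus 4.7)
The plan is to combine the three preceding lemmas (Lemma \ref{udescent}, \ref{uxk+1xk}, and \ref{uk+1uk}) through a standard splitting and Young's inequality argument. The key observation is that the tracking error at step $k+1$ involves both a new iterate $u^{k+1}$ and a new reference $u_{k+1}^*(x^{k+1})$, while the contraction in Lemma \ref{udescent} compares $u^{k+1}$ only to the ``old'' reference $u_k^*(x^k)$. So first I would insert this old reference via the decomposition
\[
u^{k+1}-u_{k+1}^*(x^{k+1}) = \bigl(u^{k+1}-u_k^*(x^k)\bigr) + \bigl(u_k^*(x^k) - u_{k+1}^*(x^{k+1})\bigr),
\]
and apply Young's inequality $\|a+b\|^2 \le (1+\epsilon)\|a\|^2 + (1+1/\epsilon)\|b\|^2$ with the choice $\epsilon = \bar{\sigma}_k\beta_k/2$.

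For the first piece I would invoke Lemma \ref{udescent} to get the factor $(1-\bar{\sigma}_k\beta_k)$, and then use the elementary inequality
\[
(1+\bar{\sigma}_k\beta_k/2)(1-\bar{\sigma}_k\beta_k) = 1 - \bar{\sigma}_k\beta_k/2 - (\bar{\sigma}_k\beta_k)^2/2 \le 1 - \bar{\sigma}_k\beta_k/2
\]
to absorb the blow-up factor and still retain a contraction with rate $\bar{\sigma}_k\beta_k/2$. The coefficient $(1+1/\epsilon) = 1 + 2/(\bar{\sigma}_k\beta_k)$ is exactly the factor that appears in the statement.

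For the second piece, I would split again through the intermediate point $u_k^*(x^{k+1})$, namely
\[
u_k^*(x^k) - u_{k+1}^*(x^{k+1}) = \bigl(u_k^*(x^k) - u_k^*(x^{k+1})\bigr) + \bigl(u_k^*(x^{k+1}) - u_{k+1}^*(x^{k+1})\bigr),
\]
so that Lemma \ref{uxk+1xk} controls the first summand by $\tfrac{L_F+2\rho_k L_f}{\bar{\sigma}_k}\|x^{k+1}-x^k\|$ and Lemma \ref{uk+1uk} controls the second by $\tfrac{2(\rho_{k+1}-\rho_k)}{\bar{\sigma}_k}M_{\nabla f} + \tfrac{3(\sigma_k-\sigma_{k+1})}{\bar{\sigma}_k}M_y$. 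Applying the crude bound $(a+b)^2\le 2a^2+2b^2$ twice then squares the constants $2$ and $3$ (giving $4$ and $9$) and introduces another factor of $2$, yielding the stated coefficients $8$ and $18$ on the penalty and regularization drift terms, and the factor $2(1+2/(\bar{\sigma}_k\beta_k))(L_F+2\rho_k L_f)^2/\bar{\sigma}_k^2$ on $\|x^{k+1}-x^k\|^2$.

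Subtracting $\|u^k - u_k^*(x^k)\|^2$ from both sides then gives \eqref{udescentlemmaeq}. The only mildly delicate step is the calibration of Young's parameter $\epsilon$: it must be small enough that the contraction $(1-\bar{\sigma}_k\beta_k)$ survives the $(1+\epsilon)$ inflation, yet large enough that the resulting $(1+1/\epsilon)$ is at most $O(1/(\bar{\sigma}_k\beta_k))$ rather than blowing up further. The choice $\epsilon = \bar{\sigma}_k\beta_k/2$ is essentially forced, and the step-size condition $\beta_k < \bar{\sigma}_k/(L_F+\rho_k L_f+2\sigma_k)^2$ from Lemma \ref{udescent} ensures $\bar{\sigma}_k\beta_k \le 1$ so that $1-(\bar{\sigma}_k\beta_k)^2/2 \ge 1/2$ and the algebraic simplification above is valid. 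Everything else is routine triangle inequality plus substitution.
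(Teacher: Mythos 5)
Your proposal is correct and follows essentially the same route as the paper: the same two-stage decomposition through $u_k^*(x^k)$ and then $u_k^*(x^{k+1})$, the same Young's parameter $\delta=\bar{\sigma}_k\beta_k/2$, and the same invocation of Lemmas \ref{udescent}, \ref{uxk+1xk}, and \ref{uk+1uk} with the factor-of-two bounds producing the constants $8$ and $18$. (Minor note: the inequality $(1+\bar{\sigma}_k\beta_k/2)(1-\bar{\sigma}_k\beta_k)\le 1-\bar{\sigma}_k\beta_k/2$ holds unconditionally since the dropped term $-(\bar{\sigma}_k\beta_k)^2/2$ is nonpositive, so the extra appeal to $\bar{\sigma}_k\beta_k\le 1$ is unnecessary.)
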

	 \begin{proof}
	 	Using the Cauchy-Schwarz inequality for any $\delta>0$, we obtain the following:
	 	\begin{equation}\label{lem_b5_eq1}
        \begin{aligned}
	 		&\|\u^{k+1}-\u_{k+1}^*(\x^{k+1})\|^2\\\le& (1+\delta)\|\u^{k+1}-\u_{k}^*(x^{k})\|^2+(1+\frac{1}{\delta})\|\u_{k+1}^*(\x^{k+1})-\u_{k}^*(x^{k})\|^2\\
	 		\le&(1+\delta)\|\u^{k+1}-\u_{k}^*(x^{k})\|^2+2(1+\frac{1}{\delta})\|\u_{k}^*(x^{k+1})-\u_{k}^*(x^{k})\|^2\\
            &+2(1+\frac{1}{\delta})\|\u_{k+1}^*(x^{k+1})-\u_{k}^*(x^{k+1})\|^2.
            \end{aligned}
	 	\end{equation}
	 	Next, take $\delta=\frac{1}{2}\beta_k \bar{\sigma}_k$ in the above inequality. By applying Lemma \ref{udescent}, we obtain the following bound:
	 	\begin{align*}
	 		(1+\delta)\|\u^{k+1}-\u_{k}^*(\x^{k})\|^2\le(1-\frac{1}{2}\beta_k\bar{\sigma}_k)\|\u^{k}-\u_{k}^*(\x^{k})\|^2.
	 	\end{align*}
Using Lemma \ref{uxk+1xk}, we can further bound the second term as follows:
    \begin{align*}
	 		&2(1+\frac{1}{\delta})\|u_{k}^*(x^{k+1})-u_{k}^*(x^{k})\|^2\le 2(1+\frac{2}{\beta_k\bar{\sigma}_k})\frac{(L_{F}+2\rho_kL_{f})^2}{\bar{\sigma}_k^2}\|x^{k+1}-x^k\|^2.
	 	\end{align*}
    Next, applying Lemma \ref{uk+1uk} with $\x=\x^{k+1}$, we obtain
    \[
\begin{aligned}
    &2(1+\frac{1}{\delta})\|\u_{k+1}^*(\x^{k+1})-\u_{k}^*(\x^{k+1})\|^2 \\
    \le& 2(1+\frac{2}{\beta_k\bar{\sigma}_k}) \left( \frac{8(\rho_{k+1} - \rho_k)^2}{\bar{\sigma}_k^2}M_{\nabla f}^2 + \frac{18(\sigma_{k} - \sigma_{k+1})^2}{\bar{\sigma}_k^2}M_y^2 \right).
\end{aligned}
    \]        
Finally, combining the above three inequalities with \eqref{lem_b5_eq1}, we arrive at the desired inequality.
	
	 \end{proof}

\begin{lemma}\label{lem_b6}
    Let $\{\rho_k\}$ and $\{\sigma_k\}$ be sequences such that  $\rho_{k+1} \ge \rho_{k}>0$, $\sigma_{k} \ge\sigma_{k+1}>0$. Then, for any $x \in X$, we have 
    \begin{equation}
        \phi_{k+1}(x) - \phi_{k}(x) \le \left( \sigma_k - \sigma_{k+1} \right)\frac{M_y^2 }{2} + 2\left(\rho_{k+1}-\rho_{k}\right)M_f.
    \end{equation}
\end{lemma}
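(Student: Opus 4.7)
The plan is to chain two saddle-point inequalities at a common evaluation point so that the minimax machinery cancels out and only the parameter-dependent terms remain. Specifically, I would fix $x$, let $(y_k^*, z_k^*) := u_k^*(x)$ and $(y_{k+1}^*, z_{k+1}^*) := u_{k+1}^*(x)$, and observe that the saddle-point characterization of $u_{k+1}^*$ with respect to $\psi_{k+1}$ gives
\[
\phi_{k+1}(x) = \psi_{k+1}(x, y_{k+1}^*, z_{k+1}^*) \le \psi_{k+1}(x, y_{k+1}^*, z_k^*),
\]
while the saddle-point characterization of $u_k^*$ with respect to $\psi_k$ (applied on the $y$-side) yields
\[
\phi_k(x) = \psi_k(x, y_k^*, z_k^*) \ge \psi_k(x, y_{k+1}^*, z_k^*).
\]
Subtracting these two inequalities eliminates the inner saddle structure and leaves
\[
\phi_{k+1}(x) - \phi_k(x) \le \psi_{k+1}(x, y_{k+1}^*, z_k^*) - \psi_k(x, y_{k+1}^*, z_k^*).
\]

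The second step is to expand the right-hand side using the definition \eqref{psi}. The $F(x,y)$ term cancels, yielding three contributions: one of the form $-(\rho_{k+1}-\rho_k)\bigl(f(x,y_{k+1}^*)-f(x,z_k^*)\bigr)$, one of the form $\tfrac{1}{2}(\sigma_{k+1}-\sigma_k)\|z_k^*\|^2$, and a cross term $-(\sigma_{k+1}-\sigma_k)\langle y_{k+1}^*, z_k^*\rangle$. The $\rho$-term is bounded by $2(\rho_{k+1}-\rho_k)M_f$ using $\rho_{k+1}\ge \rho_k$ and the definition of $M_f$ (which gives $|f(x,y_{k+1}^*)|, |f(x,z_k^*)| \le M_f$).

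The subtle step, and the one that produces the sharp constant $M_y^2/2$ rather than a cruder $M_y^2$, is to combine the two $\sigma$-terms via the identity
\[
\tfrac{1}{2}\|z_k^*\|^2 - \langle y_{k+1}^*, z_k^*\rangle = \tfrac{1}{2}\|z_k^* - y_{k+1}^*\|^2 - \tfrac{1}{2}\|y_{k+1}^*\|^2.
\]
Multiplying by $\sigma_{k+1}-\sigma_k \le 0$, the squared-distance contribution is nonpositive and can be discarded, leaving only $\tfrac{1}{2}(\sigma_k - \sigma_{k+1})\|y_{k+1}^*\|^2$, which is bounded by $\tfrac{1}{2}(\sigma_k-\sigma_{k+1})M_y^2$ through the definition of $M_y$. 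Adding the two estimates gives exactly the claimed inequality. The main obstacle is recognizing that one must not bound the two $\sigma$-terms separately by triangle inequality (which loses the factor of $1/2$), but instead complete the square first so that the sign of $\sigma_{k+1}-\sigma_k$ can be exploited to drop a nonnegative quadratic term.
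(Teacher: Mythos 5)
Your proposal is correct and follows essentially the same route as the paper: both arguments reduce to comparing $\psi_{k+1}$ and $\psi_k$ at the common point $(y_{k+1}^*(x), z_k^*(x))$ via the saddle-point inequalities, bound the $\rho$-difference by $2(\rho_{k+1}-\rho_k)M_f$, and use the completed-square identity $\tfrac{\sigma}{2}\|z\|^2-\sigma\langle y,z\rangle=\tfrac{\sigma}{2}\|z-y\|^2-\tfrac{\sigma}{2}\|y\|^2$ so that the sign of $\sigma_{k+1}-\sigma_k$ lets the nonnegative squared-distance term be dropped, yielding the sharp $M_y^2/2$ constant.
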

\begin{proof}
    We begin with the expression for $\phi_{k+1}(x)$ as follows: $$\phi_{k+1}(x) = \min_{z \in {Y}} \max_{ y \in {Y}}\psi_{k+1}(x, y, z).$$
This leads to the inequality
  \begin{equation*}
    \begin{aligned}
    \phi_{k+1}(x) =\, & \min_{ z \in {Y}} \psi_{k+1}(x, y_{k+1}^*(x), z) \\
    \le \,& \psi_{k+1}(x, y_{k+1}^*(x), z_k^*(x)) \\
         =\, &  F(x, y_{k+1}^*(x)) - \rho_{k+1}( f(x, y_{k+1}^*(x)) - f(x, z_{k}^*(x)) \\
         & + \frac{\sigma_{k+1}}{2}\|y_{k+1}^*(x) - z_{k}^*(x)\|^2 - \frac{\sigma_{k+1}}{2}\|y_{k+1}^*(x)\|^2 \\
         \le \, &  F(x, y_{k+1}^*(x)) - \rho_{k}( f(x, y_{k+1}^*(x)) - f(x, z_{k}^*(x)) \\
         &+ \frac{\sigma_{k}}{2}\|y_{k+1}^*(x) - z_{k}^*(x)\|^2 - \frac{\sigma_{k}}{2}\|y_{k+1}^*(x)\|^2 + \frac{\sigma_k - \sigma_{k+1}}{2}\|y_{k+1}^*(x)\|^2 \\
         & +   \left(\rho_{k}-\rho_{k+1}\right)( f(x, y_{k+1}^*(x)) - f(x, z_{k}^*(x))  \\
      \le \, & \max_{ y \in {Y}} \left\{ F(x, y) - \rho_{k}( f(x, y) - f(x, z_{k}^*(x)) + \frac{\sigma_{k}}{2}\|y - z_{k}^*(x)\|^2 - \frac{\sigma_{k}}{2}\|y\|^2 \right\} \\
        &+ \frac{\sigma_k - \sigma_{k+1}}{2}\|y_{k+1}^*(x)\|^2  +   \left(\rho_{k}-\rho_{k+1}\right)( f(x, y_{k+1}^*(x)) - f(x, z_{k}^*(x)) \\
        \le \, & \max_{ y \in {Y}}\psi_{k}(x, y, z_k^*(x)) + \frac{\sigma_k - \sigma_{k+1}}{2}M_y^2  + 2\left(\rho_{k+1}-\rho_{k}\right)M_f.
    \end{aligned}
    \end{equation*}
    This completes the proof, as the final inequality is derived from the fact that $\phi_k(x) = \max_{ y \in {Y}}\psi_{k}(x, y, z_k^*(x))$.
\end{proof}

The subsequent lemma characterizes the descent property of the value function $\phi_{k}(\x_k)$ across iterations.

	 \begin{lemma}\label{xdescentlemma}
Let $\{\rho_k\}$ and $\{\sigma_k\}$ be sequences such that  $\rho_{k+1} \ge \rho_{k}>0$, $\sigma_{k} \ge\sigma_{k+1}>0$. Define $\bar{\sigma}_k = \min\{\sigma_k, \mu\}$. Suppose the step-size sequence $\{\beta_k\}$ satisfies $0<\beta_k<\frac{\bar{\sigma}_k}{(L_F + \rho_k L_f + 2\sigma_k)^2}$ for each $k$. Let $\{(x^k, y^k, z^k)\}$ be the sequence generated by SiPBA (Algorithm \ref{algorithm}). Then, we have 
        \begin{equation}\label{xdescentlemma_eq}
            \begin{aligned}
                & \phi_{k+1}(x^{k+1})-\phi_{k}(x^k) + \left( \frac{1}{2\alpha_k} - \frac{L_{\phi_k}}{2}\right) \|x^{k+1}-x^{k}\|^2 \\ \le\, & \frac{\alpha_k}{2} (L_F + 2\rho_k L_f)^2 (1-\bar{\sigma}_k \beta_k) \| u^{k}- u_{k}^*(x^k)\|^2 + \left( \sigma_k - \sigma_{k+1} \right)\frac{M_y^2 }{2} + 2\left(\rho_{k+1}-\rho_{k}\right)M_f,
            \end{aligned}
        \end{equation}
 	where $L_{\phi_k}:= \frac{(L_F + 2\rho_kL_f)(L_{F}+2\rho_kL_{f} + \bar{\sigma}_k)}{\bar{\sigma}_k} $.
       	 \end{lemma}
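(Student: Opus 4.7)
The plan is to split $\phi_{k+1}(x^{k+1}) - \phi_k(x^k)$ into a parameter-drift piece and a one-step descent piece, and bound each separately. Concretely I would write
\[
\phi_{k+1}(x^{k+1}) - \phi_k(x^k) = \bigl[\phi_{k+1}(x^{k+1}) - \phi_k(x^{k+1})\bigr] + \bigl[\phi_k(x^{k+1}) - \phi_k(x^k)\bigr],
\]
apply Lemma \ref{lem_b6} at $x = x^{k+1}$ to bound the first bracket by $(\sigma_k - \sigma_{k+1}) M_y^2/2 + 2(\rho_{k+1} - \rho_k) M_f$, which is exactly the pair of ``drift'' terms appearing on the right-hand side of \eqref{xdescentlemma_eq}, and devote the remainder of the argument to the second bracket.

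For the second bracket I would use the smoothness of $\phi_k$ established in Lemma \ref{Lipshitzofphi} to apply the descent lemma
\[
\phi_k(x^{k+1}) - \phi_k(x^k) \le \langle \nabla \phi_k(x^k), x^{k+1} - x^k\rangle + \frac{L_{\phi_k}}{2}\|x^{k+1} - x^k\|^2,
\]
and then split $\nabla \phi_k(x^k) = d_x^k + (\nabla \phi_k(x^k) - d_x^k)$. The $\langle d_x^k, x^{k+1} - x^k\rangle$ piece is handled via the first-order optimality inequality for $x^{k+1} = \mathrm{Proj}_X(x^k - \alpha_k d_x^k)$ with test point $x^k$, which yields $\langle d_x^k, x^{k+1} - x^k\rangle \le -\|x^{k+1} - x^k\|^2/\alpha_k$. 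The inexactness cross-term $\langle \nabla \phi_k(x^k) - d_x^k, x^{k+1} - x^k\rangle$ is absorbed by Young's inequality into $(\alpha_k/2)\|\nabla \phi_k(x^k) - d_x^k\|^2 + (1/(2\alpha_k))\|x^{k+1} - x^k\|^2$, and the three $\|x^{k+1} - x^k\|^2$ coefficients combine to produce exactly $1/(2\alpha_k) - L_{\phi_k}/2$ on the left-hand side of the target inequality.

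It then remains to estimate the inexact-gradient error. Using the explicit formula for $\nabla \phi_k(x^k)$ from Theorem \ref{differentiable} and the definition of $d_x^k$ in \eqref{direction_x}, the difference $\nabla \phi_k(x^k) - d_x^k$ collapses into $[\nabla_x F(x^k, y_k^*(x^k)) - \nabla_x F(x^k, y^{k+1})]$ plus analogous differences of $\nabla_x f$ evaluated at $y_k^*(x^k)$ vs.\ $y^{k+1}$ and at $z_k^*(x^k)$ vs.\ $z^{k+1}$. Lipschitz continuity of $\nabla F$ and $\nabla f$ bounds the norm by $(L_F + 2\rho_k L_f)\|u^{k+1} - u_k^*(x^k)\|$. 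The crucial final step is to invoke the contraction Lemma \ref{udescent}, which upgrades $\|u^{k+1} - u_k^*(x^k)\|^2 \le (1 - \bar{\sigma}_k \beta_k)\|u^k - u_k^*(x^k)\|^2$; this is what introduces the $(1 - \bar{\sigma}_k \beta_k)$ factor in \eqref{xdescentlemma_eq}. The only subtlety I anticipate is bookkeeping: the inexact-gradient error must be measured against $u^{k+1}$ (the post-update iterate, used inside $d_x^k$) rather than $u^k$, so that the one-step contraction can be applied cleanly; otherwise the proof is a routine composition of the descent lemma, projection inequality, Young's inequality, and the auxiliary estimates already established.
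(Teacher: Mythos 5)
Your proposal is correct and follows essentially the same route as the paper's proof: the same decomposition into a parameter-drift term (bounded via Lemma \ref{lem_b6}) and a one-step descent term, the same use of the descent lemma with $L_{\phi_k}$ from Lemma \ref{Lipshitzofphi}, the projection optimality inequality, Young's inequality on the inexact-gradient cross-term, and the contraction from Lemma \ref{udescent} applied to $\|u^{k+1}-u_k^*(x^k)\|$. The coefficient bookkeeping you describe matches the paper's computation exactly.
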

	 \begin{proof}
	 	We decompose the total difference as follows:
	 	\begin{equation}
	 		\phi_{k+1}(x^{k+1})-\phi_{k}(x^k)=\phi_{k+1}(x^{k+1})-\phi_{k}(x^{k+1}) + \phi_{k}(x^{k+1})-\phi_{k}(x^k).
	 	\end{equation}
For the first term, applying Lemma \ref{lem_b6} with $x = x^{k+1}$:
\begin{equation}\label{lem_b7_eq3}
    \phi_{k+1}(x^{k+1}) - \phi_{k}(x^{k+1}) \le \left( \sigma_k - \sigma_{k+1} \right)\frac{M_y^2 }{2} + 2\left(\rho_{k+1}-\rho_{k}\right)M_f.
\end{equation}
        
For the second term, $\phi_{k}(x^{k+1})-\phi_{k}(x^k)$, we use the $L_{\phi_k}$-Lipschitz continuity of $\nabla \phi_{k}(x)$ established in Lemma \ref{Lipshitzofphi}. A standard descent inequality (cf. \cite[Lemma 5.7]{beck2017first} for smooth functions) states:
\begin{equation}\label{lem_b7_eq1}
    \phi_{k}(x^{k+1})-\phi_{k}(x^k) \le \langle\nabla\phi_{k}(x^k),x^{k+1}-x^{k}\rangle+\frac{L_{\phi_k}}{2}\|x^{k+1}-x^{k}\|^2.
\end{equation}
Next, applying the update rule for $x^{k+1}$, we get
\[
\frac{1}{\alpha_k} \|x^{k+1}-x^{k}\|^2 \le \langle - \nabla_x\psi_{k}(x^k,y^{k+1},z^{k+1}),x^{k+1}-x^{k}\rangle.
\]
By combining this inequality with the previous one, and using the formula for $\nabla\phi_{k}(x^k)$ given in Theorem \ref{differentiable}, we obtain
\begin{equation}
    \begin{aligned}
         &\phi_{k}(x^{k+1})-\phi_{k}(x^k) + \left( \frac{1}{\alpha_k} - \frac{L_{\phi_k}}{2}\right) \|x^{k+1}-x^{k}\|^2 \\
        \le\,&\langle\nabla_x\psi_{k}(x^k,y_{k}^*(x^k),z_{k}^*(x^k)) - \nabla_x\psi_{k}(x^k,y^{k+1},z^{k+1}),x^{k+1}-x^{k}\rangle \\
        \le\, & \left( (L_F + \rho_k L_f) \| y^{k+1}- y_{k}^*(x^k)\| + \rho_k L_f \| z^{k+1}- z_{k}^*(x^k)\| \right) \| x^{k+1}-x^{k}\| \\
        \le\, & \frac{\alpha_k}{2} (L_F + 2\rho_k L_f)^2\| u^{k+1}- u_{k}^*(x^k)\|^2 + \frac{1}{2\alpha_k} \| x^{k+1}-x^{k}\|^2 \\
        \le\, &\frac{\alpha_k}{2} (L_F + 2\rho_k L_f)^2 (1-\bar{\sigma}_k \beta_k) \| u^{k}- u_{k}^*(x^k)\|^2 + \frac{1}{2\alpha_k} \| x^{k+1}-x^{k}\|^2, 
    \end{aligned}
\end{equation}
where the last inequality follows from Lemma \ref{udescent}. The conclusion follows by combining the above inequality with \eqref{lem_b7_eq1} and \eqref{lem_b7_eq3}.
	
	 \end{proof}

\begin{lemma}\label{lower_phi}
  Let $\{\rho_k\}$ and $\{\sigma_k\}$ be sequences such that   $\rho_{k}>0$, $\sigma_{k} >0$ and $\sigma_k \rightarrow 0$ as $k \rightarrow \infty$. Furthermore, assume that $\phi(x)$ is bounded below on $X$, i.e., $\inf_{x \in X} \phi(x) > -\infty$. Then, there exists a constant $\underline{\phi}$ such that, for any $\{x^k\} \subset X$, we have
    \[
    \phi_k(x^k) \ge \underline{\phi}.
    \]
\end{lemma}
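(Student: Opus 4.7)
The plan is to bound $\phi_k(x^k)$ from below by combining the comparison inequality already established in Lemma \ref{lem_a5} with the hypothesized lower bound on $\phi$ and a uniform bound on $\|y^*(x^k)\|$. Concretely, Lemma \ref{lem_a5} gives, for every $k$ and every $x \in X$,
\[
\phi_k(x) \;\ge\; \phi(x) - \frac{\sigma_k}{2}\|y^*(x)\|^2, \qquad y^*(x) := \arg\max_{y \in \mathcal{S}(x)} F(x,y),
\]
so substituting $x = x^k$ yields an inequality that reduces the problem to obtaining uniform bounds on the three quantities $\phi(x^k)$, $\sigma_k$, and $\|y^*(x^k)\|$.

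First I would invoke the standing hypothesis that $\phi$ is bounded below on $X$, which directly gives $\phi(x^k) \ge \inf_{x \in X}\phi(x) =: \phi_{\min} > -\infty$ for every $k$. Second, since this lemma is stated within the convergence analysis of Section~\ref{section theoretical investigations}, Assumption~\ref{assum4} is in force, so $X$ is compact; Lemma~\ref{lem_a4} then provides a constant $M > 0$ with $\|y^*(x)\| \le M$ for all $x \in X$, and in particular for the iterates $x^k$. Third, because the sequence $\{\sigma_k\}$ converges (to $0$), it is bounded, so there exists $\bar{\sigma} > 0$ with $\sigma_k \le \bar{\sigma}$ for all $k$.

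Combining these three bounds in the inequality from Lemma \ref{lem_a5} gives
\[
\phi_k(x^k) \;\ge\; \phi_{\min} - \frac{\bar{\sigma}}{2} M^2 \;=:\; \underline{\phi},
\]
which is a finite constant independent of $k$ and of the particular choice of sequence $\{x^k\} \subset X$, completing the proof. No serious obstacle is anticipated; the argument is a direct consequence of Lemmas \ref{lem_a4} and \ref{lem_a5} together with the compactness of $X$ and convergence of $\sigma_k$. The only point that warrants attention is confirming that Assumption~\ref{assum4} may be used here (so that Lemma~\ref{lem_a4} applies with $B = X$); this is legitimate since the lemma appears in Appendix~\ref{append:convergence}, where Assumption~\ref{assum4} is explicitly assumed throughout.
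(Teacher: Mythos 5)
Your proposal is correct and follows essentially the same route as the paper: both rest on the inequality $\phi_k(x^k) \ge \phi(x^k) - \tfrac{\sigma_k}{2}\|y^*(x^k)\|^2$ from Lemma \ref{lem_a5} together with the uniform bound on $\|y^*(x^k)\|$ from Lemma \ref{lem_a4} (valid since $X$ is compact under Assumption \ref{assum4}) and the assumed lower bound on $\phi$. The only cosmetic difference is in the last step: the paper passes to a $\liminf$ using $\sigma_k \to 0$, whereas you bound $\sigma_k$ by $\sup_k \sigma_k$ to produce an explicit uniform constant $\underline{\phi}$ valid for all $k$, which is if anything slightly more direct.
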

    \begin{proof}
      According to Lemma \ref{lem_a5}, for any $k$, the following inequality holds:
      \begin{equation}\label{lemb8_eq1}
       \phi_{k}(x^k) \ge \phi(x^k)- \frac{\sigma_k}{2}\|y^*(x^k)\|^2 \ge \inf_{x \in X} \phi(x) - \frac{\sigma_k}{2}\|y^*(x^k)\|^2,
      \end{equation}
       where $y^*(x) = \arg\max_{y \in \mathcal{S}(x)}F(x,y)$.

Next, by Lemma \ref{lem_a4}, we have that there exists $M_{y^*} > 0$ such that for all $k$,
 \[
 \|y^*(x^k)\| \le M_{y^*}.
 \]    
 Thus, we can bound the second term in the inequality:
 \[
 \phi_{k}(x^k) \ge \inf_{x \in X} \phi(x) - \frac{\sigma_k}{2}M_{y^*}^2,
 \]
 Taking the limit as $k \rightarrow \infty$ and using the fact that $\sigma_k \rightarrow 0$, we obtain
 \[
 \liminf_{k \rightarrow \infty}\phi_{k}(x^k) \ge \inf_{x \in X} \phi(x),
 \]
 and then the conclusion follows.
    \end{proof}

\subsection{Proof for Proposition \ref{prop4.1}}
      \begin{proof}[Proof of Proposition \ref{prop4.1}
      ]
      
      Given $\beta_k = \beta_0 k^{-2p-q}$, and $\sigma_k = \sigma_0 k^{-q}$, the constant ratio $\beta_0/\sigma_0$ can be chosen sufficiently small to ensure that for all $k \ge 1$, the following inequality holds:
      \[
      0 < \beta_k<\frac{\bar{\sigma}_k}{(L_F + \rho_k L_f + 2\sigma_k)^2}.
      \]      
      Recall the merit function,
      \[
      V_k = a_k(\phi_k(x^k) - \underline{\phi}) + b_k\| u^{k}- u_{k}^*(x^k)\|^2.
      \]
      Applying Lemmas \ref{xdescentlemma} and \ref{udescentlemma}, specifically equations \eqref{xdescentlemma_eq} and \eqref{udescentlemmaeq},
      and using the facts that $a_{k+1} \le a_k$ and $b_{k+1} \le b_k$, we obtain:
      \begin{equation}\label{lem_b8_eq1}
      \begin{aligned}
         &\quad V_{k+1} - V_k \\
         &= a_{k+1}(\phi_{k+1}(x^{k+1}) - \underline{\phi}) - a_k(\phi_k(x^k) - \underline{\phi}) + b_{k+1}\| u^{k+1}- u_{k+1}^*(x^{k+1})\|^2 -  b_k\| u^{k}- u_{k}^*(x^k)\|^2 \\
         &\le a_{k}(\phi_{k+1}(x^{k+1})-\phi_k(x^k)) + b_{k} (\| u^{k+1}- u_{k+1}^*(x^{k+1})\|^2 - \| u^{k}- u_{k}^*(x^k)\|^2) \\
         & \le - a_{k}\left( \frac{1}{2\alpha_k} - \frac{L_{\phi_k}}{2}\right) \|x^{k+1}-x^{k}\|^2 + a_{k}\frac{\alpha_k}{2} (L_F + 2\rho_k L_f)^2 (1-\bar{\sigma}_k \beta_k) \| u^{k}- u_{k}^*(x^k)\|^2 \\
         & \quad + a_{k}\left( \sigma_k - \sigma_{k+1} \right)\frac{M_y^2 }{2} + 2a_{k}\left(\rho_{k+1}-\rho_{k}\right)M_f \\
         &\quad -\frac{1}{2} b_k \beta_k\bar{\sigma}_k \|u^{k}-u_{k}^*(x^{k})\|^2 + 2b_k(1+\frac{2}{\beta_k\bar{\sigma}_k})\frac{(L_{F}+2\rho_kL_{f})^2}{\bar{\sigma}_k^2}\|x^{k+1}-x^k\|^2 \\
        & \quad+2b_{k}(1+\frac{2}{\beta_k\bar{\sigma}_k}) \left( \frac{8(\rho_{k+1} - \rho_k)^2}{\bar{\sigma}_k^2}M_{\nabla f}^2 + \frac{18(\sigma_{k} - \sigma_{k+1})^2}{\bar{\sigma}_k^2}M_y^2 \right).
      \end{aligned}
      \end{equation}
        
The parameters are set according to the schedules: $\alpha_k = \alpha_0 k^{-s}$, $\beta_k = \beta_0 k^{-2p-q}$, $b_k = k^{-t}$, $\sigma_k = \sigma_0 k^{-q}$ and $\rho_k = \rho_0 k^p$. We have that 
\[
b_k \beta_k \sigma_k= \beta_0 \sigma_0 k^{-2p-2q - t}.
\]
Since $s > t + 4p+2q$, it follows for sufficiently large $k$ that 
\[
\alpha_k(L_F + 2\rho_k L_f)^2 < \frac{1}{2}b_k \beta_k\bar{\sigma}_k.
\]
Therefore,
\[
\begin{aligned}
 &\frac{\alpha_k}{2} (L_F + 2\rho_k L_f)^2 (1-\bar{\sigma}_k \beta_k) \| u^{k}- u_{k}^*(x^k)\|^2 -\frac{1}{2}b_k \beta_k\bar{\sigma}_k \|u^{k}-u_{k}^*(x^{k})\|^2\\ <\, &-\frac{1}{4}b_k \beta_k\bar{\sigma}_k \|u^{k}-u_{k}^*(x^{k})\|^2 
\end{aligned}
\]

Furthermore, since $a_k = k^{-s}$, $b_k = k^{-t}$, $\sigma_k = \sigma_0 k^{-q}$ and $\rho_k = \rho_0 k^p$, we find that there exists $C >0$ such that
\[
\frac{b_k}{a_k}(1+\frac{2}{\beta_k\bar{\sigma}_k})\frac{(L_{F}+2\rho_kL_{f})^2}{\bar{\sigma}_k^2} \le C k^{s-t+4p+4q},
\]
and
\[
L_{\phi_k} = \frac{(L_F + 2\rho_kL_f)(L_{F}+2\rho_kL_{f} + \bar{\sigma}_k)}{\bar{\sigma}_k} \le C k^{2p+q}.
\]
Given that  $\alpha_k = \alpha_0 k^{-s}$,  $t > 4p+4q$ and $s > t + 4p + 2q > 2p+q$, we conclude that for sufficiently large $k$:
\[
\frac{1}{2\alpha_k} - \frac{L_{\phi_k}}{2} -\frac{2b_k}{a_k}(1+\frac{2}{\beta_k\bar{\sigma}_k})\frac{(L_{F}+2\rho_kL_{f})^2}{\bar{\sigma}_k^2} > \frac{1}{4\alpha_k} .
\]
Substituting this and the earlier bound into \eqref{lem_b8_eq1}, we deduce that for large $k$:
      \begin{equation}\label{lem_b8_eq2}
      \begin{aligned}
         V_{k+1} - V_k 
         & \le -  \frac{a_k}{4\alpha_k} \|x^{k+1}-x^{k}\|^2 -\frac{1}{4}b_k \beta_k\bar{\sigma}_k \|u^{k}-u_{k}^*(x^{k})\|^2  \\
         & \quad + a_k\left( \sigma_k - \sigma_{k+1} \right)\frac{M_y^2 }{2} + 2a_k\left(\rho_{k+1} - \rho_k\right)M_f \\
         &\quad+ 2b_k(1+\frac{2}{\beta_k\bar{\sigma}_k}) \left( \frac{8(\rho_{k+1} - \rho_k)^2}{\bar{\sigma}_k^2}M_{\nabla f}^2 + \frac{18(\sigma_{k} - \sigma_{k+1})^2}{\bar{\sigma}_k^2}M_y^2 \right).
      \end{aligned}
      \end{equation}
      Next, we show that the sum of the positive terms on the right-hand side of \eqref{lem_b8_eq2} is bounded.
  Since $a_k = k^{-s} \le 1$ and $\sigma_k = \sigma_0 k^{-q}$, we have
  \[
   \sum_{k=1}^{\infty} a_k\left( \sigma_k - \sigma_{k+1} \right) \le \sum_{k=1}^{\infty}\left( \sigma_k - \sigma_{k+1} \right) \le \sigma_0.
  \]
  With $a_k = k^{-s}$, $\rho_k = \rho_0 k^p$ and $s > 2p+q$, there exits $C > 0$ such that
  \[
a_k\left(\rho_{k+1} - \rho_k\right) \le Ck^{-2p-q} ((k+1)^p - k^p)\le C k^{-p-q} \frac{p}{k} \le p C k^{-p-q-1} ,
  \]
  which implies
  \[
  \sum_{k=1}^{\infty} 2a_k\left(\rho_{k+1} - \rho_k\right)M_f < \infty.
  \]
  Regarding the remaining terms, since $\beta_k = \beta_0 k^{-2p-q}$, $b_k = k^{-t}$, $\sigma_k = \sigma_0 k^{-q}$, $\rho_k = \rho_0 k^p$ and $t > 4p + 4q$, there exists $C>0$ such that
      \begin{align*}
          b_k(1+\frac{2}{\beta_k\bar{\sigma}_k}) \frac{ (\rho_{k+1} - \rho_k)^2}{\bar{\sigma}_k^2} &\le C k^{-t+2p+4q} ((k+1)^p - k^p)^2 \\& \le C k^{-t+4p+4q} \frac{p^2}{k^2} \\
        &\le p^2 C k^{-t+4p+4q-2}.
      \end{align*}
      Thus, the sum
      \[
      \sum_{k=1}^{\infty} 2b_k(1+\frac{2}{\beta_k\bar{\sigma}_k})\frac{8(\rho_{k+1} - \rho_k)^2}{\bar{\sigma}_k^2}M_{\nabla f}^2 < \infty.
      \]
      Similarly, there exists $C>0$ such that
      \[
      2b_k(1+\frac{2}{\beta_k\bar{\sigma}_k}) \frac{18(\sigma_{k} - \sigma_{k+1})^2}{\bar{\sigma}_k^2} \le Ck^{-t+2p+2q-2}.
      \]
      Since $t > 2p+2q$, the sum
      \[
      \sum_{k=1}^{\infty}2b_k(1+\frac{2}{\beta_k\bar{\sigma}_k}) \frac{18(\sigma_{k} - \sigma_{k+1})^2}{\bar{\sigma}_k^2}M_y^2 < \infty.
      \]
      This completes the proof.
	 \end{proof}

\subsection{Proof for Theorem \ref{thm1}}
\begin{proof}[Proof of Theorem \ref{thm1}]
    The conditions $s \ge 8p + 8q$ and $t = 4p + 5q$ are chosen to satisfy the requirements of Proposition \ref{prop4.1}. From Proposition \ref{prop4.1}, and noting that $V_k \ge 0$ for all $k$, we have the following summations:
      \[
      \sum_{k=1}^{\infty}\frac{a_k}{\alpha_k} \|x^{k+1}-x^{k}\|^2 + \sum_{k=1}^{\infty} b_k \beta_k\bar{\sigma}_k \|u^{k}-u_{k}^*(x^{k})\|^2 < \infty.
      \]
Rewriting the first sum, we have:
    \begin{equation}\label{lemb10_eq1}
      \sum_{k=1}^{\infty}a_k \alpha_k \frac{1}{\alpha_k^2}\|x^{k+1}-x^{k}\|^2 + \sum_{k=1}^{\infty} b_k \beta_k\bar{\sigma}_k \|u^{k}-u_{k}^*(x^{k})\|^2 < \infty.
    \end{equation}
Since the terms in these convergent series are non-negative, it follows that for any $K > 0$:
    \[
    \min_{0<k<K} a_k \alpha_k \frac{1}{\alpha_k^2} \| x^{k+1}-x^{k}\|^2 = O(1/K), \quad \text{and} \quad  \min_{0<k<K} b_k \beta_k\bar{\sigma}_k \|u^{k}-u_{k}^*(x^{k})\|^2 = O(1/K).
    \]
    The parameter schedules are $\alpha_k = \alpha_0 k^{-s}$, $\beta_k = \beta_0 k^{-2p-q}$, $a_k = k^{-s}$, $b_k = k^{-4p-5q}$, $\sigma_k = \sigma_0 k^{-q}$ and $\rho_k = \rho_0 k^p$. We have 
    \[
    a_k \alpha_k = \alpha_0 k^{-2s}, \quad \text{and} \quad b_k \beta_k\bar{\sigma}_k = \beta_0 \sigma_0 k^{-6p-7q}.
    \]
    Under the conditions $s < 1/2$ and $6p + 7q < 1$, we we deduce the convergence rates:
    \[
    \min_{0<k<K} \frac{1}{\alpha_k^2} \| x^{k+1}-x^{k}\|^2 = O(1/K^{1-2s}), 
    \]
    and
    \[
    \min_{0<k<K} \|u^{k}-u_{k}^*(x^{k})\|^2 = O(1/K^{1-6p-7q}).
    \]
Furthermore, since $b_k\beta_k\sigma_k/(L_F+2\rho_kL_f)=O(1/K^{-7p-7q})$ and $7p-7q<1$, the summability in \eqref{lemb10_eq1} implies
\[
    \underset{k \rightarrow \infty}{\lim\inf}\;\max\{\|x^{k+1}-x^{k}\|/\alpha_k,\;(L_F+2\rho_kL_f)\| u^{k}-u_{k}^*(x^{k}) \|,\| u^{k}-u_{k}^*(x^{k}) \|\} = 0.
    \]
     From Theorem \ref{differentiable}, we have
    \[
    \begin{aligned}
         &\quad \frac{1}{\alpha_k}\| \mathrm{Proj}_{\mathcal{X}}(\x_k-\alpha_k \nabla\phi_{k}(x^k) -\mathrm{Proj}_{\mathcal{X}}(\x_k-\alpha_k d_{\x}^k)\| \\
         &\le  \|\nabla\phi_{k}(x^k) - d_{\x}^k)\| \\
        & =  \| \nabla_x\psi_{k}(x^k,y_{k}^*(x^k),z_{k}^*(x^k)) - \nabla_x\psi_{k}(x^k,y^{k+1},z^{k+1}) \| \\
        &\le(L_F + 2\rho_k L_f)\| u^{k+1}- u_{k}^*(x^k)\| \\
        & \le (L_F + 2\rho_k L_f)\| u^{k}- u_{k}^*(x^k)\|,
    \end{aligned}
    \]
where the first inequality follows from the nonexpansiveness of the projection operator $\mathrm{Proj}_{\mathcal{X}}$, and the last inequality follows from Lemma \ref{udescent}. 
Combining this with the previous equality yields:
\begin{align*}
    &\underset{k \rightarrow \infty}{\lim\inf}\;\frac{1}{\alpha_k}\| x^k - \mathrm{Proj}_{\mathcal{X}}(\x_k-\alpha_k \nabla\phi_{k}(x^k))\| \\
    =\;&\underset{k \rightarrow \infty}{\lim\inf}\;\frac{1}{\alpha_k}\| x^k - \mathrm{Proj}_{\mathcal{X}}(\x_k-\alpha_k \nabla\phi_{k}(x^k))+\mathrm{Proj}_{\mathcal{X}}(\x_k-\alpha_k  d_{\x}^k)-x^{k+1}\|\\
    \le\;&\underset{k \rightarrow \infty}{\lim\inf}\;\left(\frac{1}{\alpha_k}\| \mathrm{Proj}_{\mathcal{X}}(\x_k-\alpha_k \nabla\phi_{k}(x^k) -\mathrm{Proj}_{\mathcal{X}}(\x_k-\alpha_k d_{\x}^k)\|+\|x^k-x^{k+1}\|\right)\\
    \le\;&\underset{k \rightarrow \infty}{\lim\inf}\;(L_F + 2\rho_k L_f)\| u^{k}- u_{k}^*(x^k)\|+\|x^k-x^{k+1}\|\\
    \le\;&\underset{k \rightarrow \infty}{\lim\inf}\;2\max\{\|x^{k+1}-x^{k}\|/\alpha_k,\;(L_F+2\rho_kL_f)\| u^{k}-u_{k}^*(x^{k}) \|\}
    \\
=\;&  0.
\end{align*}
This completes the proof of stationarity.
\end{proof}

\subsection{Proof for Corollary \ref{thm2}}

We first establish the following auxiliary result. \begin{lemma}\label{lemb11}
     Assume $\phi(x)$ is lower semi-continuous on $X$. Let $\{\rho_k\}$ and $\{\sigma_k\}$ be sequences such that $\rho_k \rightarrow \infty$ and $\sigma_k \rightarrow 0$. Then, for any $\epsilon > 0$, there exists $K > 0$ such that for all $k \ge K$,
     \begin{equation}\label{}
         	 	\phi_{k}(x)\le\phi(x) + \epsilon, \qquad \forall x \in X.
     \end{equation}
	 \end{lemma}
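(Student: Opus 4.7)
The plan is to argue by contradiction, combining a sequential strengthening of Lemma \ref{lem21} with the compactness of $X$ (Assumption \ref{assum4}) and the lower semi-continuity of $\phi$. Suppose, contrary to the claim, that there exist $\epsilon > 0$, indices $k_j \to \infty$, and points $x_{k_j} \in X$ such that $\phi_{k_j}(x_{k_j}) > \phi(x_{k_j}) + \epsilon$ for every $j$. Since $X$ is compact, I may pass to a subsequence (still indexed by $j$) along which $x_{k_j} \to \bar{x} \in X$.

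The key step is to establish the sequential version of Lemma \ref{lem21}, namely
\[
\limsup_{j \to \infty} \phi_{k_j}(x_{k_j}) \le \phi(\bar{x}).
\]
The argument mirrors the proof of Lemma \ref{lem21}. Writing $\phi_{k_j}(x_{k_j}) = \psi_{k_j}(x_{k_j}, y_{k_j}^*(x_{k_j}), z_{k_j}^*(x_{k_j}))$ and exploiting the minimizing property $\psi_{k_j}(x_{k_j}, y_{k_j}^*(x_{k_j}), z_{k_j}^*(x_{k_j})) \le \psi_{k_j}(x_{k_j}, y_{k_j}^*(x_{k_j}), y_{k_j}^*(x_{k_j}))$, the penalty and coupling contributions cancel and I obtain
\[
\phi_{k_j}(x_{k_j}) \le F(x_{k_j}, y_{k_j}^*(x_{k_j})) - \tfrac{\sigma_{k_j}}{2}\|y_{k_j}^*(x_{k_j})\|^2 \le F(x_{k_j}, y_{k_j}^*(x_{k_j})),
\]
since $\sigma_{k_j} > 0$. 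By Lemma \ref{saddlepointboundedness} the sequence $\{y_{k_j}^*(x_{k_j})\}$ is bounded, so I pass to a further subsequence converging to some $\bar{y}$, and Lemma \ref{valueofy} yields $\bar{y} \in \mathcal{S}(\bar{x})$. Continuity of $F$ then gives $\lim_j F(x_{k_j}, y_{k_j}^*(x_{k_j})) = F(\bar{x}, \bar{y}) \le \phi(\bar{x})$, which delivers the displayed bound.

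To close the contradiction, the hypothesis rearranges to $\phi(x_{k_j}) < \phi_{k_j}(x_{k_j}) - \epsilon$; taking $\liminf_j$ and applying the estimate above gives
\[
\liminf_{j \to \infty} \phi(x_{k_j}) \le \liminf_{j \to \infty} \phi_{k_j}(x_{k_j}) - \epsilon \le \phi(\bar{x}) - \epsilon.
\]
On the other hand, lower semi-continuity of $\phi$ at $\bar{x}$ yields $\phi(\bar{x}) \le \liminf_j \phi(x_{k_j})$, so combining the two produces $\phi(\bar{x}) \le \phi(\bar{x}) - \epsilon$, contradicting $\epsilon > 0$.

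The main obstacle is the sequential upper-limit bound, but it is only a cosmetic generalization of Lemma \ref{lem21}: the two essential ingredients, namely the saddle-point reduction above and the sequential feasibility recovery in Lemma \ref{valueofy}, already accommodate a varying $x_k$. Compactness of $X$ together with the lsc hypothesis then upgrades the pointwise limsup estimate to the required uniform one-sided bound.
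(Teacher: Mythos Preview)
Your proof is correct and follows essentially the same approach as the paper: contradiction via compactness of $X$, the saddle-point reduction $\phi_{k}(x_k) \le F(x_k, y_{k}^*(x_k))$ obtained by comparing $z_k^*(x_k)$ with $z = y_k^*(x_k)$, Lemma~\ref{saddlepointboundedness} for boundedness, Lemma~\ref{valueofy} to recover $\bar{y} \in \mathcal{S}(\bar{x})$, and the lower semi-continuity of $\phi$ to close the contradiction. The only cosmetic difference is that the paper derives $F(x_k, y_k^*(x_k)) \ge \phi(x_k) + \epsilon$ first and then passes to the limit, whereas you first establish $\limsup_j \phi_{k_j}(x_{k_j}) \le \phi(\bar{x})$ and then invoke the hypothesis.
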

\begin{proof}
     Assume, for the sake of contradiction, that the statement is false. Then there exists an $\epsilon_0 > 0$ and sequence $\{x_k\} \subset X$ such that
     \begin{equation*}
         	 	\lim\limits_{k\to\infty} \phi_{k}(x_k) > \phi(x_k) + \epsilon_0.
     \end{equation*}
     Since $X$ is compact, by passing to a further subsequence if necessary, we can assume $x_k \rightarrow \bar{x} \in X$.  
     Recall that $(y_k^*(x_k), z_k^*(x_k))$ is the saddle point of the minimax problem $\min_{z\in {Y}} \max_{ y \in {Y}}\psi_{k}(x_k, y, z)$. Thus, $\phi_{k}(x_k) = \psi_k (x_k,y_k^*(x_k), z_k^*(x_k))$ and by the definition of $\psi_k$, we have
     \begin{equation}\label{lemb21_eq1}
     \begin{aligned}
         &F(x_k, y_k^*(x_k) ) - \rho_k( f(x_k, y_k^*(x_k) ) - f(x_k, z_k^*(x_k) ) + \frac{\sigma_k}{2}\| z_k^*(x_k) \|^2 - \sigma_k \langle y_k^*(x_k) , z_k^*(x_k) \rangle \\
     \ge &\phi(x_k) + \epsilon.
     \end{aligned}
     \end{equation}
	By Lemma \ref{saddlepointboundedness}, the sequence $\{y_k^*(x_k)\}$ is bounded. Thus, by passing to another subsequence if necessary, we can assume $y_k^*(x_k) \rightarrow \bar{y}$ for some $\bar{y} \in Y$. It follows from Lemma \ref{valueofy} that $\bar{y} \in \mathcal{S}(\bar{x})$.

    Since $z_k^*(x_k)$ is a minimizer of $\psi_{k}(x_k, y^*(x_k), z)$ over $z \in Y$, we have 
    \[
    \psi_k (x_k,y_k^*(x_k), z_k^*(x_k)) \le \psi_k (x_k,y_k^*(x_k), y_k^*(x_k)).
    \]
Substituting the definition of $\psi_k$, this yields:
    \[
 \rho_k f(x_k, z_k^*(\bar{x}) ) + \frac{\sigma_k}{2}\| z_k^*(x_k) \|^2 - \sigma_k \langle y_k^*(x_k) , z_k^*(x_k) \rangle \le  \rho_k f(x_k, y_k^*(x_k)  ) - \frac{\sigma_k}{2}\| y_k^*(x_k) \|^2.
    \]
This simplifies to:
    \[
    \rho_k \left( f(\bar{x}, z_k^*(x_k) )  - f(\bar{x}, y_k^*(x_k)  )  \right) + \frac{\sigma_k}{2}\|z_k^*(x_k) - y_k^*(x_k)\|^2 \le 0.
    \]
Combining this with \eqref{lemb21_eq1}, we have
\[
    F(x_k, y_k^*(x_k) ) \ge \phi(x_k) + \epsilon.
    \]
    Taking the limit as $k \rightarrow \infty$, continuity of $F(x,y)$ and lower semicontinuity of $\phi(x)$ yield
    \[
     F(\bar{x}, \bar{y}) \ge \phi(\bar{x}) + \epsilon.
    \]
    However, since $\bar{y} \in \mathcal{S}(\bar{x})$, we must have
    \[
    \phi(\bar{x}) \ge F(\bar{x}, \bar{y}),
    \]
leading to a contradiction. Thus, the claim follows.
	 \end{proof}

\begin{proof}[Proof of Corollary \ref{thm2}]
From Theorem \ref{thm1}, we have the stationarity condition:
\[
\underset{k \rightarrow \infty}{\lim\inf}\;\frac{1}{\alpha_k}\| x^k - \mathrm{Proj}_{\mathcal{X}}(\x_k-\alpha_k \nabla\phi_{k}(x^k)\| = 0.
\]
Thus, we can find a subsequence $\{x^{k_j}\}$ such that $\lim _{j\to\infty }\| x^{k_j} - \mathrm{Proj}_{\mathcal{X}}(\x_{k_j}-\alpha_{k_j} \nabla\phi_{k_j}(x^{k_j})\| = 0.$ 
This condition, together with the Lipschitz continuity of $\nabla\phi_{k_j}$ established in Lemma \ref{Lipshitzofphi}, 
implies that $x^{k_j}$ is an approximate stationary point for $\phi_{k_j}$. 
Standard arguments then show that for any $\epsilon >0$, there exists $K_0 > 0$ such that for all ${k_j} \ge K_0$, there is a $\delta_{j} > 0$ such that
\[
\phi_{k_j}(x) \ge \phi_{k_j}(x^{k_j}) - \epsilon \|x -x^{k_j}\|, \qquad \forall x \in \mathbb{B}_{\delta_{j}}(x^{k_j}) \cap X.
\]
According to Lemma \ref{lem_a5}, for each ${k_j}$:
      \begin{equation}\label{Thmb12_eq1}
       \phi_{k_j}(x^{k_j}) \ge \phi(x^{k_j})- \frac{\sigma_{k_j}}{2}\|y^*(x^{k_j})\|^2 
      \end{equation}
       where $y^*(x) = \arg\max_{y \in \mathcal{S}(x)}F(x,y)$.
By Lemma \ref{lem_a4}, there exists $M_{y^*} > 0$ such that for any ${k_j}$,
 \[
 \|y^*(x^{k_j})\| \le M_{y^*}.
 \]    
Since $\sigma_{k_j} \rightarrow 0$ as ${k_j} \rightarrow \infty$, we can obtain from \eqref{Thmb12_eq1} that for any $\tilde{\epsilon} >0$, there exists $K_0 > 0$ such that for each ${k_j}\ge K$,
 \[
 \phi_{k_j}(x^{k_j}) \ge \phi(x^{k_j}) - \frac{\tilde{\epsilon}}{2}.
 \]
Furthermore, by Lemma \ref{lemb11}, for any $\tilde\epsilon > 0$, there exists $K_0 > 0$ such that for any ${k_j} \ge K_0$,
\[
\phi_{k_j}(x)\le\phi(x) + \frac{\tilde{\epsilon}}{2}, \qquad \forall x \in X.
\]
Combining these inequalities yields that for any $\epsilon >0$ and $\tilde{\epsilon} >0$, we can find $K_0 > 0$ such that for each ${k_j} \ge K_0$, there exists $\delta_{j} > 0$ such that
\[
\phi(x) \ge \phi(x^{k_j}) - \epsilon \|x -x^{k_j}\| - \tilde{\epsilon}, \qquad \forall x \in \mathbb{B}_{\delta_{k_j}}(x^{j}) \cap X.
\]
Since for all $K>K_0$, we can find some  $k_j>K$, the proof is completed.
\end{proof}


\end{document}